\def \R {\mathbb{R}}
\def \de {\partial}
\def \e {\varepsilon}
\def \LL {\mathcal{L}_{\varepsilon}}
\def \Oo {\mathcal{O}}
\DeclareMathOperator*{\esssup}{ess\,sup}
\DeclareMathOperator*{\essinf}{ess\,inf}
\theoremstyle{definition}
\newtheorem{definition}{Definition}[section]
\newtheorem{remark}[definition]{Remark}
\theoremstyle{plain}
\newtheorem{theorem}[definition]{Theorem}
\newtheorem{proposition}[definition]{Proposition}
\newtheorem{lemma}[definition]{Lemma}
\newtheorem{corollary}[definition]{Corollary}
\numberwithin{equation}{section}
\begin{document}

 \title[Mixed local-nonlocal singular critical problems]
 {Multiplicity of positive solutions for mixed local-nonlocal singular critical problems}

 \author[S.\,Biagi]{Stefano Biagi}
 \author[E.\,Vecchi]{Eugenio Vecchi}
 
 \address[S.\,Biagi]{Dipartimento di Matematica
 \newline\indent Politecnico di Milano \newline\indent
 Via Bonardi 9, 20133 Milano, Italy}
 \email{stefano.biagi@polimi.it}
 
 \address[E.\,Vecchi]{Dipartimento di Matematica
 \newline\indent Università di Bologna \newline\indent
 Piazza di Porta San Donato 5, 40126 Bologna , Italy}
 \email{eugenio.vecchi2@unibo.it}

\subjclass[2020]{35J75, 35M12, 35B33}

\keywords{Mixed local-nonlocal operators, singular PDEs, critical problems}

\thanks{The Authors are
member of the {\em Gruppo Nazionale per
l'Analisi Ma\-te\-ma\-ti\-ca, la Probabilit\`a e le loro Applicazioni}
(GNAMPA) of the {\em Istituto Nazionale di Alta Matematica} (INdAM), and are
partially 
supported by the PRIN 2022 project 2022R537CS \emph{$NO^3$ - Nodal Optimization, NOnlinear elliptic equations, NOnlocal geometric problems, with a focus on regularity}, founded by the European Union - Next Generation EU}

 \begin{abstract}
 We prove the existence of at least two positive weak solutions for mixed local-nonlocal singular and critical semilinear elliptic problems in the spirit of \cite{Haitao}, extending the recent results in \cite{Garain} concerning singular problems and, at the same time, the results in \cite{BDVV5} regarding critical problems.
 \end{abstract}
 \maketitle 
\section{Introduction}\label{sec.Intro}
Let $\Omega \subset \mathbb{R}^n$ be an open and bounded set with smooth enough boundary $\partial \Omega$. We consider the following mixed local-nonlocal singular and critical semilinear elliptic problem:
\begin{equation}\tag{{$\mathrm{P_{\e}}$}}\label{eq:Problem}
\left\{ \begin{array}{rl}
\LL u  = \dfrac{\lambda}{u^{\gamma}} + u^{2^{\ast}-1} & \textrm{in } \Omega,\\
u>0 & \textrm{in } \Omega,\\
u= 0 & \textrm{in } \mathbb{R}^{n}\setminus \Omega,
\end{array}\right.
\end{equation}
 \noindent where $\lambda >0$ is a positive real parameter, $\gamma \in (0,1)$ and
 $$\LL u = -\Delta u + \e(-\Delta)^s u, \qquad s\in (0,1), \quad \e \in (0,1].$$ 
 The choice of $1$ as upper bound for $\e$ is purely esthetic.
  Along the paper it will sometimes be useful to denote the above problem as \eqref{eq:Problem}$_\lambda$ 
   to make it clear the choice of the parameter. 
\medskip

The study of singular problems when $\e=0$, i.e. $\mathcal{L}_{0}=-\Delta$, is pretty old and it dates back to \cite{Fulks} and 
\cite{Stuart}; nevertheless, it is probably the famous paper by Crandall, Rabinowitz and Tartar \cite{CRT} which 
has raised the attention to such a kind of singular PDEs. This attention
in\-cre\-a\-sed 
even more after the celebrated paper by Lazer and Mckenna \cite{LazMck}, where it was proved that for 
{\it very} singular problems (i.e., $\gamma \geq 3$), classical solutions may not belong to the natural Sobolev 
space. This is one of the reasons why variational methods are not always fruitful in dealing with singular 
problems, leading to approximation techniques extensively developed after \cite{BO} and used also in presence of 
measure sources \cite{OP}. Usually, most of the difficulties appear in cases when the right hand side takes the 
form 
$$\dfrac{f(x)}{u^{\gamma}},$$
\noindent where $f$ may be only a $L^1$ function. The situation is obviously easier for $f$ constant plus a possible perturbation (that may or may not be of power type). In this case, the moving plane has been settled in \cite{CGS} and variational methods may be used, see e.g. \cite{CoPa, Hirano, Haitao}. Since we will deal with mixed local-nonlocal problems, we also want to mention a few purely nonlocal results dealing with singular problems, see \cite{BDBMP, CMSS, CMST}, dealing with existence and qualitative properties of solutions.
\medskip

As already mentioned, the main character involved in this paper is the linear mixed local-nonlocal operator,
$$\mathcal{L}_{\e}:= -\Delta + \e(-\Delta)^s, \quad s \in (0,1), \quad \e \in (0,1],$$
\noindent see Section \ref{sec.Prel}.
The operator $\mathcal{L}_\e$ written above is just a special instance of a wide more general class of operators whose study began in the '60s, see \cite{BCP} and \cite{Cancelier} for generalizations, in connection with the validity of a maximum principle. On the other hand, the operator $\mathcal{L}$ can be seen as the infinitesimal generator of a stochastic process having both a Brownian motion and a L\'{e}vy flight, and hence there is a vast literature which establishes several regularity properties 
adopting probabilistic techniques, see e.g. \cite{CKSV} and the references therein. 

More recently, the study of regularity properties related to this operator (and its quasilinear generalizations) has seen an increasing interest, mainly adopting more analytical and PDEs approaches, see, e.g., 
\cite{AC, BDVV, BMS, CDV22, DeFMin, DPLV2, GarainKinnunen, GarainLindgren, SVWZ}. 
It is worth mentioning that the operator $\mathcal{L}_\e$ seems to be
of interest in biological applications, see, e.g.,
\cite{DV} and the references therein.
\medskip

The second actor, appearing in the right hand side of \eqref{eq:Problem}, is given by the critical perturbation of a mild singular term, namely
$$\dfrac{\lambda}{u^{\gamma}} + u^{2^{\ast}-1},$$
\noindent where $\lambda >0$, $\gamma \in (0,1)$ (hence the term {\it mild}) and $2^{\ast} := \tfrac{2n}{n-2}$ (for $n \geq 3$) is the critical Sobolev exponent. The problem we are interested in is pretty classical and can be formulated as follows
\begin{center}
{\it Find values of the parameter $\lambda$ for which \eqref{eq:Problem} admits one or more positive weak solutions.}
\end{center}
We stress that, for every $\e \in (0,1]$, one is dealing with a problem
which depends on $\e$; hence,
any possible solution $u_{\lambda}$ will {\it depend on $\e$} in a not explicit way. 
\vspace*{0.1cm}

Before entering in the details of our result, we first make a
 brief review of the existing literature concerning mixed local-nonlocal singular problems (with $\e =1$).
\begin{itemize}
\item In \cite{ArRa}, the authors prove existence and regularity results of positive solutions 
of the following \emph{linear and purely singular} problem 
\begin{equation} \label{eq:unperturbedIntro}
 \begin{cases}
  -\Delta u + (-\Delta)^s u = \dfrac{f(x)}{u^{\gamma}} & \text{in $\Omega$}, \\
  u = 0 & \text{in $\R^n\setminus\Omega$},
  \end{cases}
\end{equation}
where $\gamma > 0$ is a fixed parameter.
\vspace*{0.1cm}

\item In \cite{GarainUkhlov}, the authors prove existence of positive solutions of
the \emph{quasilinear counterpart} of the singular problem \eqref{eq:unperturbedIntro}, that is,
$$
\begin{cases}
 -\Delta_p u + (-\Delta_p)^s u = \dfrac{f(x)}{u^{\gamma}} & \text{in $\Omega$}, \\
  u = 0 & \text{in $\R^n\setminus\Omega$},
  \end{cases}
$$
 in connection with mixed local-nonlocal Sobolev inequalities. 
Regularity of solutions un\-der different summability assumptions on $f$ and symmetry results are proved as well.
\vspace*{0.1cm}

\item In \cite{Garain}, Garain proved the existence of at least two positive solutions for a 
\emph{subcritical} perturbation of problem \eqref{eq:unperturbedIntro}, namely
$$
\begin{cases}
 -\Delta_p u + (-\Delta_p)^s u = \dfrac{\lambda}{u^{\gamma}} + u^p & \text{in $\Omega$}, \\
  u = 0 & \text{in $\R^n\setminus\Omega$},
  \end{cases}
$$
where $\lambda >0,\,\gamma \in (0,1)$ and $p \in (1, 2^{\ast}-1)$.
The case of nonlinearities of the form $g(x,u) = \lambda h(u)u^{-\gamma}$ is also considered.
\vspace*{0.1cm}


\item In \cite{Biroud}, Biroud extended some of the results in \cite{GarainUkhlov} to the case of variable exponent $\gamma(x)$.
\vspace*{0.1cm}

\item In \cite{GKK}, the authors proved some existence results
for a mixed anisotropic operator with variable singular exponent.
\end{itemize}

\medskip
Keeping the previously mentioned results in mind, we are now ready to state the first main result of this paper. In what follows,
we refer to Definition \ref{def:weaksol} for the precise definition of \emph{weak solution}
 of \eqref{eq:Problem}$_{\lambda}$.
\begin{theorem} \label{thm:main}
 Let $\Omega\subset\R^n$ \emph{(}with $n\geq 3$\emph{)} be a bounded open set
 with smooth enough boundary, and let $\gamma\in (0,1)$. Then, for every $\e \in (0,1]$, there exists $\Lambda_{\e} > 0$
 such that
 \begin{itemize}
  \item[a)]  problem \eqref{eq:Problem}$_{\lambda}$ admits at least one weak solution for every $0<\lambda\leq \Lambda_{\e}$;
  \item[b)]
    problem \eqref{eq:Problem}$_{\lambda}$ does not admit weak solutions
  for every $\lambda>\Lambda_{\e}$.
 \end{itemize}
 Moreover, for every $\lambda \in (0,\Lambda_{\e})$, the weak solution $u_{\lambda}$ found above is a local minimizer of the functional
\begin{equation}\label{eq:Def_Ilambda}
I_{\lambda,\e}(u):= \dfrac{1}{2}\rho_{\e}(u)^2 - \dfrac{\lambda}{1-\gamma}\int_{\Omega}|u|^{1-\gamma}\, dx - \dfrac{1}{2^*}\int_{\Omega}|u|^{2^*}\, dx
\end{equation}
in the $\mathcal{X}^{1,2}(\Omega)$-topology.
\end{theorem}

The above theorem falls in the framework of the results of Haitao \cite{Haitao} and Hirano, Saccon and Shoji \cite{Hirano}, where the authors considered critical perturbations of mild singular terms using sub and supersolution methods (\cite{Haitao}) or a Nehari manifold approach (\cite{Hirano}). We also note that Theorem \ref{thm:main} extends the results in \cite{Garain}.
\medskip

Let us now spend a few words on the proof of Theorem \ref{thm:main}. First of all, we decided to follow the approach of Haitao \cite{Haitao}. The argument runs as follows. The existence of a first positive solution is obtained by means of a sub/su\-per\-so\-lu\-tion variational scheme, which extends to the critical case a result in \cite{Garain}. The role of the subsolution is naturally played by the solution of the purely singular problem \eqref{eq:unperturbedIntro}, 
while a supersolution can be found exploiting the monotonicity of $\eqref{eq:Problem}_{\lambda}$ with respect to the parameter, see Lemma \ref{lem:2.3Haitao}.
As in \cite{Haitao}, in Lemma \ref{lem:25Haitao}, the first solution found is proved to be a local minimizer in the $\mathcal{X}^{1,2}(\Omega)$-topology for the functional $I_{\lambda, \e}$ defined in \eqref{eq:Def_Ilambda}. In the purely local case \cite{Haitao}, 
this is proved using a quantitative strong maximum principle established 
in \cite[Theorem 3]{BNH1C1}. Since we do not have such a result at our disposal, we exploit a weak Harnack-type inequality for weak supersolutions of mixed operators with a zero order term, see Section \ref{sec:weakharnack}. 
In doing this, we heavily rely on the recent regularity results obtained by Garain and Kinnunen \cite{GarainKinnunen}.
The fact that the first solution is a local minimizer is the key point to start when looking for a second positive solution. Classically, this is done by Ekeland variational principle combined with a perturbation of the first solution with a suitable family of functions built upon the Aubin-Talenti functions. This choice is pretty natural in the purely local case and it is replaced by the nonlocal Talenti functions in the purely nonlocal case \cite{GiacMukSre}. Thanks to the recent results in \cite{BDVV5}, in the mixed local-nonlocal case the situation is closer to the purely local case: indeed, in \cite[Propositions 1.1 and 1.2]{BDVV5} it has been proved that the best constant in the natural mixed Sobolev quotient is never achieved and that it coincides with the purely local one, therefore suggesting that the classical Aubin-Talenti functions are the good candidates to play with in critical problems. Despite this, the presence of the fractional Laplacian creates a lack of scaling invariance of the operator $\mathcal{L}_{\e}$ (and this occurs for every $\e$) which prevents from a clear repetition of the classical arguments, see the proof of Lemma \ref{lem:CrucialLemma}. See also \cite{KRS} where a similar situation occurs with the $(p,q)$-Laplacian. 
\vspace*{0.1cm}

To deal with this phenomenon we assume that $\e$ is \emph{sufficiently small}
 (so that the role of the fractional Laplacian $(-\Delta)^s$ in the lacking of homogeneity
 is somehow negligible), and we have to carefully keep track of the dependence on $\e$ of several crucial estimates. 
 Once done this, we can  prove the existence of a second solution.
 \begin{theorem} \label{thm:main2}
 Let $\Omega\subset\R^n$ \emph{(}with $n\geq 3$\emph{)} be a bounded open set
 with smooth enough boundary, and let $\gamma\in (0,1)$. 
 Then, there exist 
 $\lambda_{\star} > 0$ (independent of $\e$) and $\varepsilon_0 \in (0,1]$
 such that problem \eqref{eq:Problem}$_{\lambda}$
admits \emph{two weak solutions} for all $\varepsilon \in (0,\varepsilon_0)$ and all $\lambda \in (0,\lambda_{\star})$. 
\end{theorem}


We notice that a careful inspection of Lemma \ref{lem:CrucialLemma} shows that a better result can be obtained at the price of quite big restrictions, namely
\begin{corollary}
Let $n=3$ and $s\in \left(0,\tfrac{1}{2}\right)$. Then, for every $\e \in (0,1]$ problem \eqref{eq:Problem}$_{\lambda}$ admits at least two positive weak solutions for every $\lambda \in (0,\Lambda)$.
\end{corollary}
\medskip

The paper is organized as follows: in Section \ref{sec.Prel} we collect all the relevant notation, definitions and preliminary results; in Section \ref{sec:weakharnack} we prove a weak Harnak-type inequality for weak supersolutions of $\mathcal{L}_{\e} + a$ which we believe being of independent interest. In Section \ref{sec:proofThm} we prove Theorem \ref{thm:main} following the scheme previously described and finally in Section \ref{sec:proofThm2} we prove Theorem \ref{thm:main2}.

\section{Preliminaries}\label{sec.Prel}
 In this section we collect some preliminary definitions and results which
 will be used in the rest of the paper. First of all, we review some basic properties
 of the fractional Laplace operator $(-\Delta)^s$; 
 we then properly
 introduce the adequate functional setting for the study
 of problem \eqref{eq:Problem}$_{\lambda}$, and we give the precise definition
 of \emph{weak sub/supersolution} of this problem.
 \medskip
 
 \noindent\textbf{i) The fractional Laplacian.} Let $s\in (0,1)$ be fixed, and let
$u:\R^n\to\R$ be a measurable function. The \emph{fractional La\-pla\-cian} (of order $s$) of $u$
at a point $x\in\R^n$ is defined (up to a
\emph{normalization constant} $C_{n,s} > 0$ that we neglect) as
\begin{equation*}
 (-\Delta)^s u(x)  = 2\,\mathrm{P.V.}\int_{\R^n}\frac{u(x)-u(y)}{|x-y|^{n+2s}}\,dy
 = 2\,\lim_{\varepsilon\to 0^+}\int_{\{|x-y|\geq\varepsilon\}}\frac{u(x)-u(y)}{|x-y|^{n+2s}}\,dy,
\end{equation*}
provided that the above limit \emph{exists and is finite}.

Since we are interested in \emph{weak solutions}
of problem \eqref{eq:Problem}$_{\lambda}$, 
we recall some basic
facts concerning the \emph{Weak Theory} of $(-\Delta)^s$.
To begin with we recall that, if $\mathcal{O}\subseteq\R^n$ is an arbitrary open set,
the {fractional Laplacian} $(-\Delta)^s$ is related
(essentially via the Euler-Lagrange equation)
to the \emph{fractional Sobolev space} $H^s(\Oo)$, which is defined as follows:
$$H^s(\Oo) := \Big\{u\in L^2(\Oo):\,[u]^2_{s,\Oo} = \iint_{\Oo\times\Oo}
\frac{|u(x)-u(y)|^2}{|x-y|^{N+2s}}\,dx\,dy < \infty\Big\}.$$
While we refer to \cite{LeoniFract} for a thorough introduction
on fractional Sobolev spaces, here we list the few basic properties of $H^s(\Oo)$
we will exploit in this paper.
\begin{itemize}
 \item[a)] $H^s(\Oo)$ is a real Hilbert space, with the scalar product
 $$\langle u,v\rangle_{s,\,\Oo} 
 := \iint_{\Oo\times\Oo}\frac{(u(x)-u(y))(v(x)-v(y))}{|x-y|^{N+2s}}\,dx\,dy\qquad
 (u,v\in H^s(\Oo)).$$ 
 
 \item[b)] $C_0^\infty(\Oo)$ is a \emph{linear subspace of $H^s(\Oo)$}; in addition,
 in the particular case when $\Oo = \R^n$, we have that
  $C_0^\infty(\R^n)$ is \emph{dense} in $H^s(\R^n)$.
 \vspace*{0.1cm}
 
 \item[c)] If $\Oo = \R^n$ or if $\Oo$ has \emph{bounded boundary $\partial\Oo\in C^{0,1}$},
 we have the \emph{continuous embedding} $H^1(\Oo) \hookrightarrow H^s(\Oo)$,
 that is, there exists $\mathbf{c} = \mathbf{c}(n,s) > 0$ s.t.
 \begin{equation} \label{eq:H1embeddingHs}
  \iint_{\Oo\times\Oo}\frac{|u(x)-u(y)|^2}{|x-y|^{n+2s}}\,dx\,dy \leq 
  \mathbf{c}\,\|u\|_{H^1(\Oo)}^2\quad\text{for every $u\in H^1(\Oo)$}.
 \end{equation}
  In particular, if $\Oo\subseteq\R^n$ is a
  \emph{bounded open set} (with no regularity as\-sump\-tions on $\partial\Oo$) and if
  $u\in H_0^1(\Oo)$, setting $\hat{u} = u\cdot\mathbf{1}_\Oo\in H^1(\R^n)$ we have
  \begin{equation} \label{eq:H01embeddingHs}
  \iint_{\R^{2n}}\frac{|\hat{u}(x)-\hat{u}(y)|^2}{|x-y|^{n+2s}}\,dx\,dy \leq 
  \beta\,\int_\Oo|\nabla u|^2\,dx,
 \end{equation}
 where $\beta > 0$ is a suitable constant depending on $n,s$ and on $|\Omega|$. Here
 and throughout,  $|\cdot|$ denotes the $n$-dimensional Lebesgue measure.
\end{itemize}
The relation between the space $H^s(\Oo)$ and the fractional Laplacian $(-\Delta)^s$
is rooted in the following \emph{fractional integration-by-parts formula}: let 
$\Oo,\mathcal{V}\subseteq\R^n$ be open sets (bounded or not) such that $\Oo\Subset\mathcal{V}$,
and let $u\in \mathcal{L}^s(\R^n)\cap C^2(\mathcal{V}) \subseteq H^s(\Oo)$
(see the above \eqref{eq:H1embeddingHs}); given any $\varphi\in C_0^\infty(\Omega)$, we have
\begin{equation*}
\begin{split}
 \int_{\Oo}(-\Delta)^su\,\varphi\,dx & = \iint_{\R^{2n}}\frac{(u(x)-u(y))(\varphi(x)-\varphi(y))}{|x-y|^{n+2s}}\,dx\,dy \\
 & = 
 \iint_{\Oo\times\Oo}\frac{(u(x)-u(y))(\varphi(x)-\varphi(y))}{|x-y|^{n+2s}}\,dx\,dy
 \\
 &\qquad\quad-2\,\iint_{(\R^n\setminus\Oo)\times\Oo}
 \frac{(u(x)-u(y))\varphi(y)}{|x-y|^{n+2s}}\,dx\,dy
 \end{split}
\end{equation*}
(notice that, since $u\in \mathcal{L}^s(\R^n)\cap C^2(\mathcal{V})$, we 
have $(-\Delta)^su\in C(\mathcal{V})$).

As a con\-se\-quence of the above formula, it is then natural to define the \emph{fractional Laplacian} $(-\Delta)^s u$
of a function $u\in \mathcal{L}^s(\R^n)\cap H^s(\Oo)$ as the
\emph{linear functional} acting on the space $C_0^\infty(\Oo)$ as follows
\begin{equation} \label{eq:fractionalLapWeak}
 (-\Delta)^s u(\varphi) = \iint_{\R^{2n}}\frac{(u(x)-u(y))
 (\varphi(x)-\varphi(y))}{|x-y|^{n+2s}}\,dx\,dy.
 \end{equation}
 Since we are assuming that $u\in\mathcal{L}^s(\R^n)\cap H^s(\Oo)$, and since the kernel
 $|z|^{-n-2s}$ is in\-te\-gra\-ble at infinity, it is easy to recognize 
 that the functional $(-\Delta)^s u$ is actually a \emph{distribution on $\Oo$}; more precisely,
 for every compact set $K\subseteq\Oo$ there exists a constant
 $C > 0$ such that
 \begin{equation} \label{eq:contDeltasu}
  \begin{gathered}
  \iint_{\R^{2n}}\frac{|u(x)-u(y)|
 |\varphi(x)-\varphi(y)|}{|x-y|^{n+2s}}\,dx\,dy 
 \leq C\big(1+d(K,\R^n\setminus\Oo)^{-2s}\big)\|\varphi\|_{H^s(\Oo)}
  \\[0.1cm]
  \text{for every $\varphi\in C_0^\infty(\Oo)$ with $\mathrm{supp}(\varphi)\subseteq K$}
  \end{gathered} 
 \end{equation}
 (here, the constant $C$ depends on $n,s$ and on $u$), and this is enough to guarantee
 that $(-\Delta)^s u\in \mathcal{D}'(\Omega)$ (as $\|\varphi\|_{H^s(\Omega)}
 \leq c\,\|\varphi\|_{C^1(K)}$ for some absolute constant $c > 0$).
 In the particular case when $u\in H^s(\R^n)\subseteq \mathcal{L}^s(\R^n)$,
  the above \eqref{eq:contDeltasu} shows that the distribution
  $(-\Delta)^s u$ can be continuously extended to 
  $H^s(\R^N)$
  (recall that $C_0^\infty(\R^n)$
  is \emph{dense} in $H^s(\R^n)$), and thus
  $$(-\Delta)^s u\in (H^s(\R^n))'.$$ 
  In this case, we also have
  $$\frac{\mathrm{d}}{\mathrm{d}t}\Big|_{t = 0}[u+tv]^2_{s,\R^n} = (-\Delta)^s u(v)\quad
  \text{for all $v\in H^s(\R^n)$}.$$
  \vspace*{0.1cm}
  
  \noindent\textbf{ii) The space $\mathcal{X}^{1,2}(\Omega)$.}
  Now we have briefly recalled some basic facts
  from the Weak Theory for $(-\Delta)^s$, we are in a position
  to introduce the adequate functional setting for the study of mixed local-nonlocal operators.
  \vspace*{0.1cm}
  
  Let then $\Omega\subseteq\R^n$ be a bounded open set Lipschitz boundary $\de\Omega$.
  In view of the usual definition of the (local) Sobolev space $H_0^1(\Omega)$,
  and taking into account \eqref{eq:fractionalLapWeak}, we 
  define the space
  $\mathcal{X}^{1,2}(\Omega)$ as the com\-ple\-tion
  of $C_0^\infty(\Omega)$ w.r.t.\,the \emph{global norm} 
  $$\rho(u) := \left(\||\nabla u|\|^2_{L^2(\R^n)}+[u]^2_{s,\R^n}\right)^{1/2},
  \qquad u\in C_0^\infty(\Omega).$$
  Due to its relevance in the sequel, we also introduce a distinguished notation
  for the \emph{cone of the non-negative functions} in $\mathcal{X}^{1,2}(\Omega)$: we set
  $$\mathcal{X}^{1,2}_+(\Omega) := \{u\in\mathcal{X}^{1,2}(\Omega):\,\text{$u\geq 0$ a.e.\,in $\Omega$}\}.$$
 %
  %
  Since this norm $\rho$ is induced by the scalar product
    $$\mathcal{B}(u,v) := \int_{\R^n}\nabla u\cdot\nabla v\,dx
    + \langle u,v\rangle_{s,\R^n}$$
  (where $\cdot$ denotes the usual scalar product in 
    $\R^n$), the space $\mathcal{X}^{1,2}(\Omega)$
    is a real \emph{Hilbert space}; most importantly, since $\Omega$ is bounded
    and $\de\Omega$ is Lipschitz, by combining the above
    \eqref{eq:H1embeddingHs} with the classical Poincaré
    inequality we infer that
    \begin{equation*}
     \vartheta^{-1}\|u\|_{H^1(\R^n)}\leq \rho(u)\leq \vartheta\|u\|_{H^1(\R^n)}\qquad
    \text{for every $u\in C_0^\infty(\Omega)$},
    \end{equation*}
    where $\vartheta > 1$ is a suitable constant depending on $n,s$ and on $|\Omega|$.
    Thus, $\rho(\cdot)$ and the full $H^1$-norm in $\R^n$
   are \emph{actually equivalent} on the space $C^\infty_0(\Omega)$, so that
   \begin{equation} \label{eq:defX12explicit}
   \begin{split}
    \mathcal{X}^{1,2}(\Omega) & = \overline{C_0^\infty(\Omega)}^{\,\,\|\cdot\|_{H^1(\R^n)}} \\
    & = \{u\in H^1(\R^n):\,\text{$u|_\Omega\in H_0^1(\Omega)$ and 
    $u\equiv 0$ a.e.\,in $\R^n\setminus\Omega$}\}.
    \end{split}
   \end{equation}
   We explicitly observe that, on account of \eqref{eq:defX12explicit}, 
   the functions in $\mathcal{X}^{1,2}(\Omega)$ naturally
  satisfy the nonlocal Dirichlet condition 
  prescribed in problem \eqref{eq:Problem}$_\lambda$, that is,
  \begin{equation*}
   \text{$u\equiv 0$ a.e.\,in $\R^n\setminus\Omega$ for every $u\in\mathcal{X}^{1,2}(\Omega)$}.
   \end{equation*}
   \begin{remark}[Properties of the space $\mathcal{X}^{1,2}(\Omega)$] \label{rem:spaceX12}
    For a future reference, we list in this remark some properties
    of the function space $\mathcal{X}^{1,2}(\Omega)$ which will be repeatedly
    exploited in the rest of the paper.
    \begin{enumerate}
     \item[1)] Since both $H^1(\R^n)$ and $H_0^1(\Omega)$ are \emph{closed} under the
     maximum/minimum o\-pe\-ra\-tion, it is readily seen that
     $$u_{\pm}\in \mathcal{X}^{1,2}(\Omega)\quad\text{for every $u\in\mathcal{X}^{1,2}(\Omega)$},$$
     \noindent where $u_{+}= \max\{u,0\}$ and $u_{-}=\max\{-u,0\}$.
     \item[2)] On account of \eqref{eq:H01embeddingHs}, for every
     $u\in\mathcal{X}^{1,2}(\Omega)$ we have
     \begin{equation} \label{eq:X12HsRn}
    [u]_{s,\R^n}^2 = 
     \iint_{\R^{2n}}\frac{|u(x)-u(y)|^2}{|x-y|^{n+2s}}\,dx\,dy \leq\beta\int_\Omega|\nabla u|^2\,dx.
   \end{equation}
   As a consequence,
   the norm $\rho_{\e}$ is \emph{glo\-bal\-ly equivalent} on $\mathcal{X}^{1,2}(\Omega)$ to the 
   $H_0^1$-no\-rm: in fact, by \eqref{eq:X12HsRn} there exists a constant $\Theta
   = \Theta_{n,s} > 0$ such that
   \begin{equation} \label{eq:equivalencerhoH01}
    \||\nabla u|\|_{L^2(\Omega)}\leq \rho(u)\leq \Theta\||\nabla u|\|_{L^2(\Omega)}
    \quad\text{for every $u\in\mathcal{X}^{1,2}(\Omega)$}.
   \end{equation}
   
   \item[3)] By the (local) Sobolev inequality, for every $u\in\mathcal{X}^{1,2}(\Omega)$ 
   we have
   \begin{align*}
    S_n\|u\|_{L^{2^*}(\Omega)}^2 & = \|u\|_{L^{2^*}(\R^n)}^2
    \leq \int_{\R^n}|\nabla u|^2\,dx\leq \rho(u)^2.
   \end{align*}
   This, together with H\"older's inequality
   (recall that $\Omega$ is \emph{bounded}), proves the \emph{continuous embedding}
   $\text{$\mathcal{X}^{1,2}(\Omega)\hookrightarrow L^{p}(\Omega)$ for every $1\leq p\leq 2^*$}.$
   \vspace*{0.1cm}
   
   \item[4)] By combining \eqref{eq:equivalencerhoH01} with the \emph{compact embedding} of 
   $H_0^1(\Omega)\hookrightarrow L^p(\Omega)$ (holding true for every $1\leq p < 2^*$),
   we derive that also the embedding
   $$\text{$\mathcal{X}^{1,2}(\Omega)\hookrightarrow L^{p}(\Omega)$ is compact
   for every $1\leq p< 2^*$}.$$
   As a consequence, if $\{u_k\}_k$ is a bounded sequence in $\mathcal{X}^{1,2}(\Omega)$, it is possible
   to find a (unique) function $u\in\mathcal{X}^{1,2}(\Omega)$ such that (up to a sub-sequence)
   \begin{itemize}
    \item[a)] $u_n\to u$ weakly in $\mathcal{X}^{1,2}(\Omega)$;
    \item[b)] $u_n\to u$ \emph{strongly} in $L^p(\Omega)$ for every $1\leq p < 2^*$;
    \item[c)] $u_n\to u$ pointwise a.e.\,in $\Omega$.
   \end{itemize}
   Clearly, since both $u_n$ (for all $n\geq 1$) and $u$ \emph{identically vanish}
    out of $\Omega$, see
   \eqref{eq:defX12explicit}, we can replace
   $\Omega$ with $\R^n$ in the above assertions b)-c).
    \end{enumerate}
       \end{remark}
     We now observe that, since we aim
    to consider (a weak realization of) the $\e$-de\-pen\-dent operator
    $\LL = -\Delta+\e(-\Delta)^s$
    (the latter being the leading operator
    of problem \eqref{eq:Problem}$_\lambda$), it follows once again
    from \eqref{eq:fractionalLapWeak}
    that the bilinear form naturally associated with $\LL$ is the following
    $$\mathcal{B}_{\e}(u,v) = \int_{\R^n}\nabla u\cdot\nabla v\,dx
    + \e\,\langle u,v\rangle_{s,\R^n};$$
    in its turn, this form $\mathcal{B}_\e$ induces the $\e$-dependent quadratic form
    $$\rho_\e(u) = \||\nabla u|\|^2_{L^2(\R^n)}+\e\,[u]^2_s\qquad
     (\text{for $u\in\mathcal{X}^{1,2}(\Omega)$}).$$
    While in this perspective it should seem more \emph{natural} to use
     the
    norm $\rho_\e$ in place of $\rho$ on the space $\mathcal{X}^{1,2}(\Omega)$, 
    it is readily seen that these two norms
    are indeed \emph{equivalent on $\mathcal{X}^{1,2}(\Omega)$}
    (and equivalent to the $H_0^1$-norm), \emph{uniformly with respect to $\e$}:
    in fact, taking into account \eqref{eq:equivalencerhoH01} (and since $0<\e\leq 1$), we have
    \begin{equation} \label{eq:equivalenceuniforme}
     \||\nabla u|\|_{H^1_0(\Omega)}\leq\rho_\e(u)\leq \rho(u)\leq \Theta\||\nabla u|\|_{H^1_0(\Omega)}
     \end{equation}
   for some $\Theta > 0$ only depending on $n,s$. On account of \eqref{eq:equivalenceuniforme},
   we can indifferently use $\rho_\e(\cdot),\,\rho(\cdot)$ and the $H_0^1$-norm
   to define the topology of the space $\mathcal{X}^{1,2}(\Omega)$, and this choice
   does not produce any dependence on $\e\in(0,1]$.
   \medskip
   
    \noindent\textbf{Notation.} We conclude this second part of the section
    with a short list of notation, which will be used in the sequel;
    here, as usual, $\e\in(0,1]$ is a fixed parameter.
   \vspace*{0.1cm}
   
    1)\,\,Given any open set $\mathcal{O}\subseteq\R^n$ (not necessarily bounded), we set
    \begin{equation*}
    \begin{split}
     \mathrm{a)}&\,\,\mathcal{B}_{\e,\mathcal{O}}(u,v) =
    \int_{\mathcal{O}}\nabla u\cdot\nabla v\,dx
    + \e\, \langle u,v\rangle_{s,\R^n}\,\,(\text{for $u,v\in\mathcal{X}^{1,2}(\Omega)$}); \\
    \mathrm{b)}&\,\,\mathcal{Q}_{\e,\Oo}(u) = \mathcal{B}_{\rho_{\e},\Oo}(u,u)\,\,(\text{for $u
    \in\mathcal{X}^{1,2}(\Omega)$}).
    \end{split}
    \end{equation*}
    Since $\mathcal{X}^{1,2}(\Omega)\subseteq H^1(\R^n)$
    (see \eqref{eq:defX12explicit}), 
    the above forms $\mathcal{B}_{\e,\mathcal{O}}$ and $\mathcal{Q}_{\e,\Oo}$ are 
    well-de\-fined; moreover,
    again by taking into account \eqref{eq:defX12explicit} we have
	\begin{itemize}
	 \item $\mathcal{B}_{\e,\Omega}(u,v) = \mathcal{B}_{\e,\R^n}(u,v)\equiv \mathcal{B}_\e(u,v)$
	 for all $u,v\in\mathcal{X}^{1,2}(\Omega)$;
	 \item $\mathcal{Q}_{\e,\Omega}(u) = \mathcal{Q}_{\e,\R^n}(u) 
	 \equiv \rho_{\e}(u)$ for all $u\in\mathcal{X}^{1,2}(\Omega)$.
	\end{itemize}

    2)\,\,Given any \emph{bounded} open set $\mathcal{O}\subseteq\R^n$, we set
   $$\|u\|_{H_0^1(\Oo)} := \||\nabla u|\|_{L^2(\Oo)} = \Big(\int_\Oo|\nabla u|^2\,dx\Big)^{1/2}.$$
     
  \noindent\textbf{iii) Weak sub/supersolutions of problem \eqref{eq:Problem}$_\lambda$.} Thanks to
  all the preliminaries recalled so far, we are finally ready to give
  the precise definition of \emph{weak sub/supersolutions} of problem  \eqref{eq:Problem}$_\lambda$.
  Actually, for a reason which will be clear in Section \ref{sec:weakharnack}, we consider
  the more general problem
  \begin{equation} \label{eq:pbgeneral}
    \begin{cases}
     \LL u = \frac{\lambda}{u^\gamma}+f(x,u) & \text{in $\Omega$} \\
     u > 0 & \text{in $\Omega$}, \\
     u = 0 & \text{in $\R^n\setminus\Omega$}
     \end{cases}
    \end{equation}
  where $f:\Omega\times(0,+\infty)\to\R$ is an arbitrary Carath\'eodory function satisfying
  the following {growth condition}: {\em there exists a constant $K_f > 0$ such that}
  \begin{equation} \label{eq:generalgrowthf}
   |f(x,t)|\leq K_f(1+t^{2^*-1})\quad\text{\emph{for a.e.\,$x\in\Omega$ and every $t > 0$}}.
   \end{equation}
   Clearly, problem \eqref{eq:Problem}$_\lambda$ is of the form \eqref{eq:pbgeneral}, with the choice
   $f(x,t) = t^{2^*-1}$.
  \begin{definition} \label{def:weaksol}
   Let $f:\Omega\times(0,+\infty)\to\R$
   be a Carath\'eodory function sa\-ti\-sfying the above condition
   \eqref{eq:generalgrowthf}.
   We say that a function $u\in\mathcal{X}^{1,2}(\Omega)$ is a \emph{weak subsolution}
   (resp.\,\emph{supersolution}) of problem \eqref{eq:pbgeneral}
   if it satisfies the following properties:
   \begin{itemize}
    \item[i)] $u > 0$ a.e.\,in $\Omega$ and $u^{-\gamma}\in L^1_{\mathrm{loc}}(\Omega)$;
    \vspace*{0.1cm}
    
    \item[ii)] for every test function $\varphi\in C_0^\infty(\Omega),\,\text{$\varphi\geq 0$ in $\Omega$}$, 
    we have
    \begin{equation} \label{eq:weakformSol}
     \mathcal{B}_{\e}(u,\varphi) \leq\,[\text{resp.}\,\geq]\,\,
    \lambda\int_{\Omega}u^{-\gamma}\varphi\,dx
    + \int_{\Omega}f(x,u)\varphi\,dx.
    \end{equation}
   \end{itemize}
   We say that $u$ is a \emph{weak solution} of problem
   \eqref{eq:pbgeneral} if it is both a weak subsolution and a weak
   supersolution of the same problem.
  \end{definition}
  \begin{remark} \label{rem:defweaksolPb}
  We now list here below some comments concerning the above Definition \ref{def:weaksol}. In what follows,
  we tacitly understand that $f:\Omega\to(0,+\infty)\to\R$ is a Carath\'eodory function satisfying
  the growth
  assumption \eqref{eq:generalgrowthf}.
  \medskip
  
  1)\,\,If $u\in\mathcal{X}^{1,2}(\Omega)$ is a weak sub/supersolution of problem
  \eqref{eq:pbgeneral}, all the integrals appearing in \eqref{eq:weakformSol} 
  \emph{exist and are finite}.
  Indeed, by combining
  \eqref{eq:X12HsRn} with the H\"older inequality, for every 
  $\varphi\in C_0^\infty(\Omega),\,\text{$\varphi\geq 0$ in $\Omega$}$, 
  we obtain (recall that $\e \in (0,1]$)
  \begin{equation} \label{eq:BrhouphiFinite}
  \begin{split}
   |\mathcal{B}_{\e}(u,\varphi)| & \leq 
   \| u\|_{H_0^1(\Omega)}\|\varphi\|_{H_0^1(\Omega)}+
   [u]_{s,\R^n}\cdot[\varphi]_{s,\R^n} \\
   & \leq (1+\beta^2)\| u\|_{H_0^1(\Omega)}\|\varphi\|_{H_0^1(\Omega)} < +\infty.
   \end{split}
  \end{equation}
  Moreover, using \eqref{eq:generalgrowthf} and the Sobolev inequality, we also get
  \begin{equation} \label{eq:termfFinite}
   \begin{split}
    \int_\Omega|f(x,u)|\cdot|\varphi|& \leq K_f\Big(\|\varphi\|_{L^1(\Omega)}
    + \int_{\Omega}|u|^{2^*-1}|\varphi|\,dx\Big) \\
    & (\text{using H\"older's inequality}) \\
    & \leq  K_f\big(\|\varphi\|_{L^1(\Omega)}
    +\|u\|_{L^{2^*}(\Omega)}^{2^*-1}\cdot\|\varphi\|_{L^{2^*}(\Omega)}\big) \\
    & \leq K_f\big(\|\varphi\|_{L^1(\Omega)}
    +S_n^{-2^*/2}\| u\|_{H_0^1(\Omega)}^{2^*-1}\cdot\| \varphi\|_{H_0^1(\Omega)}\big) \\
    & (\text{again by H\"older's inequality and Poincaré inequality}) \\
    & \leq C\| \varphi\|_{H_0^1(\Omega)}\big(1+\| u\|_{H_0^1(\Omega)}^{2^*-1}\big)<+\infty,
   \end{split}
  \end{equation}
  where $S_n > 0$ is the best Sobolev constant in $\R^n$, and $C > 0$ depends on $n,\,f,\,|\Omega|$.
  Fi\-nally, since \emph{we are assuming that $u^{-\gamma}\in L^1_{\mathrm{loc}}(\Omega)$}, we obviously
  have
  $$0\leq \int_\Omega u^{-\gamma}\varphi\,dx \leq 
  \|\varphi\|_{L^\infty(\Omega)}\int_{\mathrm{supp}(\varphi)}u^{-\gamma}\,dx < +\infty. 
  $$
  We explicitly stress that this last estimate 
  (which is related with the \emph{singular term} $u^{-\gamma}$) is the unique estimate involving
  the $L^\infty$-norm of the test function $\varphi$; on the contrary, 
  estimates \eqref{eq:BrhouphiFinite}-\eqref{eq:termfFinite} only involve
  the \emph{$H_0^1$-norm} of $\varphi$.
  \vspace*{0.1cm}
  
  2)\,\,If $u\in\mathcal{X}^{1,2}(\Omega)$ is a weak \emph{solution} of
  problem
  \eqref{eq:pbgeneral}, and if $\varphi\in C_0^\infty(\Omega)$ is a {non-negative}
  test function (that is, $\varphi\geq 0$ in $\Omega$), by \eqref{eq:BrhouphiFinite}-\eqref{eq:termfFinite}
  we have
  \begin{align*}
   0\leq \int_\Omega u^{-\gamma}\varphi\,dx & = 
   \frac{1}{\lambda}\Big(\mathcal{B}_{\e}(u,\varphi)
   + \int_\Omega f(x,u)\varphi\,dx\Big)
   \\
   & \leq C\| \varphi\|_{H_0^1(\Omega)}\big(1+\| u\|_{H_0^1(\Omega)}^{2^*-1}\big);
  \end{align*}
  from this, by using a standard \emph{density argument},
  and by taking into account Remark \ref{rem:spaceX12}\,-1), we can easily prove the following facts:
  \begin{itemize}
   \item[a)] $u^{-\gamma}\varphi\in L^1(\Omega)$
  \emph{for every $\varphi\in\mathcal{X}^{1,2}(\Omega)$};
  \item[b)] identity \eqref{eq:weakformSol}
  actually holds \emph{for every $\varphi\in\mathcal{X}^{1,2}(\Omega)$}, see \cite{GarainUkhlov}.
  \end{itemize}

   3)\,\,Let $u\in\mathcal{X}^{1,2}(\Omega)$ be a weak \emph{solution} of problem \eqref{eq:pbgeneral}.
  Since, in particular, we know that
  $u > 0$ a.e.\,in $\Omega$ (and $u\equiv 0$ a.e.\,in $\R^n\setminus\Omega$), it is quite easy
  to recognize that $u$ is a \emph{weak supersolution of the equation}
  $$\LL u  = 0\quad\text{in $\Omega$},$$
  in the sense of \cite[Definition 2.5]{GarainKinnunen} (see also
  \cite[Remark 2.6]{GarainKinnunen}
  and note that, by 
  \eqref{eq:H1embeddingHs} and the definition
  of $\mathcal{X}^{1,2}(\Omega)$, we have $u\in H^1(\R^n)\subseteq H^s(\R^n)$).
  As a consequence, we are entitled to apply \cite[Lemma 8.1]{GarainKinnunen}, ensuring that
  \begin{equation*}
    \begin{gathered}
     \text{for every $\mathcal{O}\Subset\Omega$ there exists $c(\mathcal{O},u) > 0$ 
     s.t.\,$u\geq c(\mathcal{O},u) > 0$ a.e.\,in $\mathcal{O}$}.
    \end{gathered}
  \end{equation*}
  We explicitly notice that, since the function $u$ intrinsically depends on $\e$
  (as it is a weak supersolution of an equation depending on
  the parameter $\e$), the constant $c$
  in the above estimate is not, in general, independent of $\e$.
  \end{remark}
  \section{Some auxiliary results} \label{sec:weakharnack}
  Before embarking on the proof of Theorem \ref{thm:main}, we e\-sta\-blish in this section
  a couple of auxiliary results which will be fundamental in our arguments.
  \medskip
  
  \noindent\textbf{i)\,\,Weak Harnack-type inequality for $\LL+a(x)$}.
  The first auxiliary result we aim to prove is a
  we\-ak Harnack-type inequality for the weak supersolutions of 
  \begin{equation} \label{eq:withZeroOrder}
   \text{$\LL u + a(x)u = 0$ in $\Omega$},
   \end{equation}
  where $a\in L^\infty(\Omega),\,\text{$a\geq 0$ a.e.\,in $\Omega$}$. As already
  discussed in the introduction, such a result turns out to be a proper substitute
  of \cite[Theorem 3]{BNH1C1}, and it will be used as a \emph{key tool} in the proof of Theorem \ref{thm:main}-b).
  \medskip
  
  To begin with, since equation \eqref{eq:withZeroOrder} \emph{is not} a particular
  case of the one appearing in problem
  \eqref{eq:pbgeneral}, we give the following definition.
  \begin{definition} \label{def:weaksubsolZeroOrder}
   Let $a\in L^\infty(\Omega)$. We say that a function
   $u\in \mathcal{L}^s(\R^n)\cap H^1_{\mathrm{loc}}(\Omega) $ is a
   \emph{weak subsolution} (resp.\,\emph{supersolution}) of equation \eqref{eq:withZeroOrder} if
   \begin{equation} \label{eq:defweaksubsuperzeroOrd}
    \mathcal{B}_{\e,\mathcal{O}}(u,\varphi)+\int_{\mathcal{O}}
    a(x)u\varphi\,dx \leq\,[\text{resp}.\,\geq]\,\,0
   \end{equation}
   for every $\mathcal{O}\Subset\Omega$ and every $\varphi\in\mathcal{X}^{1,2}_+(\mathcal{O})$.
  \end{definition}
  \begin{remark} \label{rem:defbenposta}
   By combining \eqref{eq:H1embeddingHs}-\eqref{eq:H01embeddingHs} with
   \eqref{eq:contDeltasu}, it is easy to recognize that
   the above Definition \ref{def:weaksubsolZeroOrder} \emph{is well-posed}. In fact,
   let $\mathcal{V}$ be a bounded open set \emph{with smooth boundary} and such that
   $\mathcal{O}\Subset\mathcal{V}\Subset\Omega$.
   Since $u\in H^1(\mathcal{\mathcal{V}})$ {(}recall that $u\in H^1_{\mathrm{loc}}(\Omega)$ and 
 that $\mathcal{V}\Subset\Omega${)}, from \eqref{eq:H1embeddingHs} we deduce that
 $u\in H^s(\mathcal{V})$; as a consequence, we are then entitled to
 apply \eqref{eq:contDeltasu} with the choice $K = \overline{\Oo}\Subset\mathcal{V}$: this gives   
   \begin{equation*}
  \begin{gathered}
  \iint_{\R^{2n}}\frac{|u(x)-u(y)|
 |\varphi(x)-\varphi(y)|}{|x-y|^{n+2s}}\,dx\,dy 
 \leq C\big(1+d(\overline{\Oo},\R^n\setminus\mathcal{V})^{-2s}\big)\|\varphi\|_{H^s(\mathcal{V})}
  \end{gathered} 
 \end{equation*}
 and this estimate holds {for every $\varphi\in C_0^\infty(\Oo)$} (here, $C > 0$ is a constant
 depending on $n,s$ and on $u$).
 Now, by combining this last estimate with 
 \eqref{eq:H01embeddingHs},
 we get
 \begin{equation*}
  \iint_{\R^{2n}}\frac{|u(x)-u(y)||\varphi(x)-\varphi(y)|}{|x-y|^{N+2s}}\,dx\,dy
  \leq C'\big(1+d(\overline{\Oo},\R^n\setminus\mathcal{V})^{-2s}\big)
  \|\varphi\|_{H^1_0(\Oo)},
 \end{equation*}
 for a suitable constant $C' > 0$;
 from this, by taking into account that $C_0^\infty(\Oo)$ is dense in $\mathcal{X}^{1,2}(\Oo)
 \subseteq H^1(\R^n)$, we
 easily
 derive that
 $$|\mathcal{B}_{\e,\mathcal{O}}(u,\varphi)|<+\infty\quad
 \text{for every $\varphi\in \mathcal{X}^{1,2}(\mathcal{O})$}.$$
 Finally, since $a\in L^\infty(\Omega)$, we also have
 $$\int_\mathcal{O}|a(x)u\varphi|\,dx\leq 
 \|a\|_{L^\infty(\Omega)}\|u\|_{L^2(\mathcal{O})}\|\varphi\|_{L^2(\mathcal{O})}<+\infty$$
 and this proves that Definition \ref{def:weaksubsolZeroOrder} is well-posed.
  \end{remark}
  With Definition \ref{def:weaksubsolZeroOrder} at hand, we are ready to state the
  announced weak Harnack-ty\-pe inequality for weak supersolutions of equation \eqref{eq:withZeroOrder}.
  Throughout what follows, if $u\in\mathcal{L}^s(\R^n)$
 and if $B_r(x_0)\subseteq\R^n$ is a given ball, we define 
\begin{equation*}
\mathrm{Tail}(u; x_0, r):= r^2 \int_{\mathbb{R}^n \setminus B_r(x_0)}\dfrac{|u(y)|}{|y-x_0|^{n+2s}}\, dy<+\infty.
\end{equation*}  
This quantity is usually referred to as the \emph{tail of $u$} (with respect to $B_r(x_0)$).
\begin{proposition} \label{prop:weakHarnack}
Let $a\in L^\infty(\Omega),\,\text{$a\geq 0$ a.e.\,in $\Omega$}$. Moreover,
let $u$ be a weak su\-per\-solution of 
equation \eqref{eq:withZeroOrder} such that 
$u \geq 0$ a.e.\,in $B_{R}(x_0)\Subset\Omega$.

Then, there exist $Q = Q(n,s,a)>0$ and $c=c(n,s,a)>0$, \emph{both independent of
the fixed $\e\in(0,1]$}, such that
\begin{equation}\label{eq:8.1GK}
\left(\fint_{B_r(x_0)}u^Q \, dx\right)^{1/Q} \leq c \essinf_{B_r(x_0)}u + c \left(\dfrac{r}{R}\right)^{2}\mathrm{Tail}(u_{-};x_0,R),
\end{equation}
\noindent whenever $B_r(x_0) \subset B_R(x_0)$ and $r\in (0,1]$.
\end{proposition}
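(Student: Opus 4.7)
The plan is to adapt the Moser iteration and logarithmic lemma of Garain--Kinnunen \cite{GarainKinnunen}, established for weak supersolutions of $\LL u = 0$, to accommodate the bounded non-negative zero-order term $a(x)u$. The key observation is that, since $a \geq 0$ and $u \geq 0$ on $B_R(x_0)$, the extra contributions arising from the zero-order term in the weak formulation are dominated by $\|a\|_{L^\infty}$ times integrals of powers of $u$ of the same type already present in the iteration, so that the whole scheme transfers with only a modification of the constants.

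First I would prove a Caccioppoli-type inequality for negative powers of $u$. For $0 < r < R' < R$, a cut-off $\eta \in C_0^\infty(B_{R'}(x_0))$ with $\eta \equiv 1$ on $B_r(x_0)$, and $\delta > 0$, set $u_\delta := u + \delta$ and test the weak supersolution inequality with $\varphi = u_\delta^{-q}\eta^2 \in \mathcal{X}^{1,2}_+(B_{R'})$ for suitable $q > 0$. The local gradient part and the nonlocal bilinear form yield, exactly as in \cite{GarainKinnunen}, a good energy term in $\nabla u_\delta^{(1-q)/2}$ and its nonlocal analogue, plus error terms controlled by $|\nabla \eta|^2 u_\delta^{1-q}$ and by nonlocal contributions that generate the tail $\mathrm{Tail}(u_-; x_0, R)$ (coming from the portion $(\R^n\setminus B_R)\times B_r$ of the double integral). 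The only genuinely new term is
\begin{equation*}
\int_{B_{R'}} a(x)\, u\, u_\delta^{-q}\eta^2\,dx
\leq \|a\|_{L^\infty(\Omega)} \int_{B_{R'}} u_\delta^{1-q}\eta^2\,dx,
\end{equation*}
which for $q \in (0,1)$ is of the same structural form as the error terms already present and can be absorbed or kept on the right-hand side, up to a constant depending on $n, s, \|a\|_{L^\infty}$.

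Next I would establish the logarithmic lemma by testing with $\varphi = u_\delta^{-1}\eta^2$ (the borderline case $q = 1$): again the zero-order contribution only modifies the constant by a factor involving $\|a\|_{L^\infty}$. A John--Nirenberg type argument then transfers this into a BMO-type bound on $\log u_\delta$ over $B_r(x_0)$. Coupling this logarithmic estimate with a Moser iteration in negative exponents (to reach $\essinf u$) and in positive exponents (to reach the $L^\eta$ average on the left-hand side), in the vein of \cite[Sections 7--8]{GarainKinnunen}, produces \eqref{eq:8.1GK} after passing to the limit $\delta \to 0^+$. The restriction $r \in (0, 1]$ enters precisely to keep the scaling factors bounded throughout the iteration.

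The main obstacle is the careful bookkeeping required to guarantee that, at each step of the Moser iteration, the perturbation by $a(x)u$ does not worsen the constants beyond uniform control, so that the final bounds depend only on $n, s$ and $\|a\|_{L^\infty}$ and the number of iteration steps can still be summed to a finite limit. All the other nonlocal subtleties --- in particular the appearance of $\mathrm{Tail}(u_-; x_0, R)$ and the handling of the double integral over $\R^{2n}$ --- are inherited essentially verbatim from \cite{GarainKinnunen}, since the tail quantity is produced purely by the singular kernel and is insensitive to the addition of a bounded lower-order term.
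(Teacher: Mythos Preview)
Your proposal is correct in spirit and would yield \eqref{eq:8.1GK}, but it follows a different route from the paper. You outline the classical \emph{Moser} strategy: test with $u_\delta^{-q}\eta^2$ to get a reverse Caccioppoli inequality for negative powers, use the logarithmic test function $u_\delta^{-1}\eta^2$ together with John--Nirenberg to cross the zero exponent, and then iterate. The paper instead follows the \emph{De Giorgi/Krylov--Safonov} route: it first proves a Caccioppoli inequality for truncations $(u-k)_-$ (Lemma~\ref{lem:Caccioppolitype}), then an \emph{expansion of positivity} lemma (Lemma~\ref{lem:expansion}), which internally uses the log-type test function and a De Giorgi level-set iteration \`a la \cite[Lemma 4.1]{DiBenedetto}, and finally invokes the covering argument of Di Castro--Kuusi--Palatucci \cite[Lemma 4.1]{CastroKuusiPal} to pass from expansion of positivity to the weak Harnack inequality. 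Your observation that the zero-order term $\int a\,u\,\varphi$ contributes only a harmless $\|a\|_{L^\infty}$-weighted lower-order integral (since $a\geq 0$ and $u\geq 0$ on $B_R$) is exactly the mechanism the paper exploits, and it works equally well in either scheme. The paper's route is perhaps more economical here because the needed ingredients from \cite{GarainKinnunen} and \cite{CastroKuusiPal} can be quoted almost verbatim, with only the zero-order perturbation to track; your Moser scheme would require reproducing more of the iteration machinery explicitly, though it is no less valid.
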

In order to prove Proposition \ref{prop:weakHarnack} we 
first establish the following \emph{Caccioppoli-type inequality} for $\LL +a(x)$, 
which is modeled on \cite[Lemma 3.1]{GarainKinnunen}.
\begin{lemma}  \label{lem:Caccioppolitype}
Let $a\in L^\infty(\Omega)$, and
let $u$ be a weak subsolution \emph{[}resp.\,supersolution\emph{]} of 
equation \eqref{eq:withZeroOrder}.
We arbitrarily fix $k\in\R$, and we set
$$w= (u-k)_+\quad[\text{resp.}\,w = (u-k)_-].$$ 
Then, there exists $C>0$, \emph{independent of $\e\in(0,1]$}, such that 
\begin{equation}\label{eq:CaccioppoliType}
\begin{aligned}
&\int_{B_{r}(x_0)}  \psi^2 |\nabla w|^2 \, dx +\e\, \iint_{B_{r}(x_0)\times B_{r}(x_0)} \dfrac{|w(x)\psi^2(x) - w(y)\psi^2(y)|^2}{|x-y|^{n+2s}}\, dx dy \\
& \qquad\qquad +\,\,[\text{resp.}\,-]\int_{B_r(x_0)}a(x)uw\psi^2\,dx\\
& \qquad\leq C 
 \bigg( \int_{B_{r}(x_0)}w^2|\nabla \psi|^2 \, dx \\
 & \qquad\qquad 
  + 
   \iint_{B_{r}(x_0)\times B_{r}(x_0)}\max\{w(x),w(y)\} \dfrac{|\psi(x)- \psi(y)|^2}{|x-y|^{n+2s}}\, dx dy \\
&\qquad\qquad+ \esssup_{x \in \mathrm{supp}(\psi)} 
\int_{\mathbb{R}^n \setminus B_{r}(x_0)}\dfrac{w(y)}{|x-y|^{n+2s}}\, dy \, \int_{B_{r}(x_0)}w\psi^2 \, dx\bigg),
\end{aligned}
\end{equation}
\noindent whenever $B_r(x_0) \Subset \Omega$ and $\psi \in C^{\infty}_{0}(B_r(x_0))$ is nonnegative.
\end{lemma}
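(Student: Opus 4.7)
The plan is to plug the natural Caccioppoli test function $\varphi = w\psi^2$ into \eqref{eq:defweaksubsuperzeroOrd}, with $w = (u-k)_+$ in the subsolution case and $w = (u-k)_-$ in the supersolution case. In both cases $\varphi$ is nonnegative and compactly supported in $B_r(x_0)$; choosing any open set $\mathcal{O}$ with $\mathrm{supp}(\psi)\subseteq\mathcal{O}\Subset\Omega$ and noting that $u\in H^1(\mathcal{O})$ (since $u\in H^1_{\mathrm{loc}}(\Omega)$), it follows from Remark \ref{rem:spaceX12}-1) and the smoothness of $\psi$ that $\varphi\in \mathcal{X}^{1,2}_+(\mathcal{O})$ once extended by zero, hence is an admissible test function. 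The weak inequality then splits into three contributions that I will analyse separately: the local gradient term $\int_{\R^n}\nabla u\cdot\nabla(w\psi^2)\,dx$, the nonlocal double integral over $\R^{2n}$, and the zero-order term $\int_{B_r(x_0)} auw\psi^2\,dx$.

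\textbf{Local and zero-order parts.} On $\{w>0\}$ we have $\nabla u = \pm\nabla w$ (plus for the subsolution, minus for the supersolution case). Expanding $\nabla(w\psi^2) = \psi^2\nabla w + 2w\psi\nabla\psi$ and using Cauchy-Schwarz together with Young's inequality $2|ab|\leq \tfrac{1}{2}a^2+2b^2$ absorbs the cross term, yielding, up to sign,
$$\pm\int_{B_r}\nabla u\cdot\nabla(w\psi^2)\,dx \geq \tfrac{1}{2}\int_{B_r}\psi^2|\nabla w|^2\,dx - 2\int_{B_r} w^2|\nabla\psi|^2\,dx.$$
The zero-order contribution is simply carried along with the sign dictated by the weak formulation: in the subsolution case $\mathcal{B}_{\rho,\mathcal{O}}(u,\varphi)+\int auw\psi^2 \leq 0$ gives $+\int auw\psi^2$ on the LHS, matching the $+$ in \eqref{eq:CaccioppoliType}; in the supersolution case one multiplies through by $-1$ (consistent with $\nabla u = -\nabla w$) and obtains the $-$ sign. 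Since $a\in L^\infty(\Omega)$, no further estimation of this term is needed.

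\textbf{Nonlocal part.} This is the heart of the argument, and I would follow line-by-line the computation of \cite[Lemma 3.1]{GarainKinnunen}. Split $\R^{2n} = (B_r\times B_r)\cup (B_r\times (\R^n\setminus B_r)) \cup ((\R^n\setminus B_r)\times B_r)$. On $B_r\times B_r$, invoke the by-now standard pointwise algebraic inequality (originating in Di~Castro--Kuusi--Palatucci)
$$\pm(u(x)-u(y))\bigl(w(x)\psi^2(x)-w(y)\psi^2(y)\bigr) \geq \tfrac{1}{2}\bigl|w(x)\psi(x)-w(y)\psi(y)\bigr|^2 - C\max\{w(x),w(y)\}^2|\psi(x)-\psi(y)|^2,$$
which produces the Gagliardo term of $w\psi$ on the LHS and the $|\psi(x)-\psi(y)|^2$ term on the RHS of \eqref{eq:CaccioppoliType}. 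On the long-range regions $\psi$ vanishes at one of the variables, so the integrand reduces to $(u(x)-u(y))w(x)\psi^2(x)|x-y|^{-n-2s}$; bounding $\pm(u(x)-u(y))$ from below by $-w(y)$ plus a nonnegative contribution, and taking the essential supremum over $x\in\mathrm{supp}(\psi)$ of the resulting kernel, yields exactly the tail term
$$\esssup_{x\in\mathrm{supp}(\psi)}\int_{\R^n\setminus B_r(x_0)}\frac{w(y)}{|x-y|^{n+2s}}\,dy\,\cdot\,\int_{B_r(x_0)}w\psi^2\,dx.$$

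\textbf{Main obstacle.} The genuinely delicate step is the pointwise algebraic inequality above and the associated sign bookkeeping that differentiates the two truncations; for $w = (u-k)_-$ one first needs $-(u(x)-u(y))(w(x)-w(y))\geq (w(x)-w(y))^2$, from which the opposite sign of the $a$-term in \eqref{eq:CaccioppoliType} ultimately stems. Everything else is a careful transcription of the nonlocal Caccioppoli argument of \cite{GarainKinnunen} with the extra zero-order contribution $\int auw\psi^2$ carried through unchanged; since $a\in L^\infty(\Omega)$ this term requires no estimate, so the bulk of the work is bookkeeping rather than new analysis.
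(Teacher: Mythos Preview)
Your approach is essentially the same as the paper's: both test the weak formulation with $\varphi=w\psi^2\in\mathcal{X}^{1,2}_+(B_r(x_0))$, split into local, nonlocal, and zero-order contributions, and invoke \cite[Lemma~3.1]{GarainKinnunen} for the estimates of the first two, carrying the $\int a\,uw\psi^2$ term through untouched. The only cosmetic difference is that the paper handles the supersolution case by applying the subsolution estimate to $v=-u$ (observing $(u-k)_-=(v-(-k))_+$), whereas you track the sign change directly via $\nabla u=-\nabla w$ and the inequality $-(u(x)-u(y))(w(x)-w(y))\geq (w(x)-w(y))^2$; both routes are equivalent and equally short.
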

\begin{proof}
 Assume that $u$ is a weak \emph{subsolution} of equation \eqref{eq:withZeroOrder}, and let $w,\psi$
 be as in the statement. Since $u\in H^1_{\mathrm{loc}}(\Omega)$ and $\psi\in C_0^\infty(B_r(x_0))$,
 we clearly have
 $$\varphi := w\psi^2\in\mathcal{X}^{1,2}_+(B_r(x_0)).$$
 As a consequence of this fact, we are then entitled to use this function $\varphi$ as a test
 function in \eqref{eq:defweaksubsuperzeroOrd} (recall that $\Oo = B_r(x_0)\Subset\Omega$), 
 thus obtaining
 \begin{equation} \label{eq:usewpsitest}
  \begin{split}
  & 0\geq B_{\e,B_r(x_0)}(u,\varphi)+\int_{B_r(x_0)}a(x)u\varphi\,dx \\
  & \qquad
  = \int_{B_r(x_0)}\nabla u\cdot\nabla (w\psi^2)\,dx
  \\
  & \qquad\qquad +\e\, \iint_{\R^{2n}}\frac{(u(x)-u(y))((w\psi^2)(x)-(w\psi^2)(y))}{|x-y|^{n+2s}}\,dx\,dy
  \\
  & \qquad\qquad\qquad+\int_{B_r(x_0)}a(x)uw\psi^2\,dx \\
  & \qquad \equiv I+J+\int_{B_r(x_0)}a(x)uw\psi^2\,dx.
 \end{split}
 \end{equation}
 Now, they are proved in \cite[Lemma 3.1]{GarainKinnunen} the following
 estimates for the integrals $I$ and $J$, holding true \emph{for every function $u\in \LL^s(\R^n)\cap 
 H^1_{\mathrm{loc}}(\Omega)$}:
 \begin{align*}
   \mathrm{i)}&\,\,I\geq c_1\int_{B_r(x_0)}\psi^2|\nabla w|^2\,dx - C_1\int_{B_r(x_0)}w^2|\nabla \psi|^2\,dx \\
  \mathrm{ii)}&\,\,J \geq \e \, c_2\iint_{B_{r}(x_0)\times B_{r}(x_0)}
  \dfrac{|w(x)\psi^2(x) - w(y)\psi^2(y)|^2}{|x-y|^{n+2s}}\, dx dy \\
  &\qquad
  -\e \, C_2\iint_{B_{r}(x_0)\times B_{r}(x_0)}\max\{w(x),w(y)\} \dfrac{|\psi(x)- \psi(y)|^2}{|x-y|^{n+2s}}\, dx dy \\
&\qquad\qquad-\e \, C_2\esssup_{x \in \mathrm{supp}(\psi)} 
\int_{\mathbb{R}^n \setminus B_{r}(x_0)}\dfrac{w(y)}{|x-y|^{n+2s}}\, dy\,
\int_{B_{r}(x_0)}w\psi^2 \, dx,
 \end{align*}
 for suitable \emph{absolute}, and independent of $\e$, constants $c_i,\,C_i > 0$ (for $i = 1,2$).
 By combining \eqref{eq:usewpsitest} with the above i)-ii), and recalling that $\e \leq 1$, we then obtain the claimed \eqref{eq:CaccioppoliType}.
 
 Finally, if $u$ is a weak \emph{supersolution}
 of equation \eqref{eq:withZeroOrder} it suffices to apply the ob\-ta\-i\-ned
 estimate to $v = -u$, which is a weak \emph{subsolution} of the same equation.
\end{proof}
 We now establish 
 an {\it expansion of positivity}-type result, which slightly extends \cite[Lemma 7.1]{GarainKinnunen} and whose proof heavily relies on that one. In order to avoid tiring 
 re\-pe\-ti\-tions, we will only highlight the few modifications needed.

\begin{lemma} \label{lem:expansion}
Let $a\in L^\infty(\Omega),\,\text{$a\geq 0$ a.e.\,in $\Omega$}$, and
let $u$ be a weak \emph{supersolution} of equation \eqref{eq:withZeroOrder} 
satisfying $u \geq 0$ 
in $B_{R}(x_0) \Subset \Omega$. Let then $k \geq 0$, and suppose there exists 
a constant $\tau \in (0,1]$ such that
\begin{equation*} 
|B_{r}(x_0) \cap \{ u \geq k\}| \geq \tau \, |B_r(x_0)|,
\end{equation*}
\noindent for some $r \in (0,1]$ satisfying $0 <r< R/16$. 
Then, there exists $\delta = \delta(n,s,\tau,a)>0$, 
\emph{independent of the fixed $\e\in(0,1]$}, such that
\begin{equation} \label{eq:explansionConclusion}
\essinf_{B_{4r}(x_0)}u \geq \delta k - \left(\dfrac{r}{R}\right)^{2} \mathrm{Tail}(u_{-};x_0,R).
\end{equation}
\end{lemma}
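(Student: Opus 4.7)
The plan is to mimic the proof of \cite[Lemma 7.1]{GarainKinnunen} (which handles the case $a\equiv 0$), modifying only the steps where the zero-order term $a(x)u$ enters. The core of that argument is a De Giorgi-type iteration: one applies the Caccioppoli inequality to $w=(u-k_j)_-$ for a carefully chosen sequence of levels $k_j$ decreasing to $\delta k$, invokes the fractional/local Sobolev embedding to promote the resulting energy estimate to an $L^p$ estimate, and iterates on the measures $|B_{\rho_j}(x_0)\cap\{u<k_j\}|$, using the density hypothesis $|B_r(x_0)\cap\{u\geq k\}|\geq \tau |B_r(x_0)|$ to start the iteration. The tail of $u_-$ appears through the last term on the right-hand side of the Caccioppoli inequality applied to $w=(u-k)_-$, and accounts for the term $(r/R)^2\,\mathrm{Tail}(u_-;x_0,R)$ on the right-hand side of \eqref{eq:explansionConclusion}.

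The only genuinely new feature is the extra term $-\int_{B_r(x_0)}a(x)\,u\,w\,\psi^2\,dx$ on the left-hand side of our Lemma \ref{lem:Caccioppolitype} applied to a supersolution. Since $a\geq 0$, $u\geq 0$ on $B_R(x_0)$ and $w\geq 0$, this term is non-positive, so it must be moved to the right-hand side. On the support of $w=(u-k)_-$ we have $0\leq u\leq k$ and $0\leq w\leq k$, whence $uw\leq k^2/4$; therefore
\[
-\int_{B_r(x_0)} a(x)\,u\,w\,\psi^2\,dx
\;\leq\; \tfrac14\,\|a\|_{L^\infty(\Omega)}\,k^2\int_{B_r(x_0)}\psi^2\,dx
\;\leq\; c_n\,\|a\|_{L^\infty(\Omega)}\,k^2\,r^n.
\]
After normalisation by $|B_r(x_0)|$ this contributes an extra summand of the form $c\,\|a\|_{L^\infty(\Omega)}\,k^2$ to the right-hand side of the basic energy estimate that drives the iteration. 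This is of the same order in $k$ as the quadratic quantities already handled by the argument in \cite{GarainKinnunen}, so it can be carried through each step of the De Giorgi scheme; the only effect is that the resulting threshold $\delta$ acquires a dependence on $\|a\|_{L^\infty(\Omega)}$, consistent with the stated $\delta=\delta(n,s,\tau,a)$.

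With this modification in place, the rest of the proof of \cite[Lemma 7.1]{GarainKinnunen} applies verbatim: the assumption $r<R/16$ ensures that all the auxiliary balls used in the iteration (typically up to $B_{4r}(x_0)$ or $B_{8r}(x_0)$) lie comfortably inside $B_R(x_0)$ where $u\geq 0$, so the tail contributions not already recorded in $\mathrm{Tail}(u_-;x_0,R)$ are either zero or harmless, and the iteration terminates at the level $\delta k$, yielding \eqref{eq:explansionConclusion}. The main obstacle is the bookkeeping: at each iteration step one must verify that the additional $\|a\|_\infty k^2$ term is either absorbed into an existing quadratic quantity (using $k_j\leq k$) or contributes to a geometric series with summable tail, so that the limiting threshold is indeed a strictly positive multiple of $k$. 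This is routine but requires one to follow the constants through the iteration scheme of \cite{GarainKinnunen}.
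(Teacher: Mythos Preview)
Your overall plan is correct---follow \cite[Lemma~7.1]{GarainKinnunen} and track the extra zero-order term---but the sketch as written has two gaps that prevent the iteration from closing.

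\medskip

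\textbf{First gap: the logarithmic step is missing.} The proof in \cite{GarainKinnunen} is \emph{not} a single De Giorgi iteration started from the density hypothesis. It has a preliminary Step~1 in which one tests the supersolution $w=u+t_\varepsilon$ with $\phi=w^{-1}\psi^2$ (a logarithmic-type test function) to obtain the density estimate
\[
\bigl|B_{6r}(x_0)\cap\{u\leq 2\delta k-\tfrac12(r/R)^2\mathrm{Tail}(u_-;x_0,R)-\varepsilon\}\bigr|
\;\leq\;\frac{c}{\tau\,\ln((2\delta)^{-1})}\,|B_{6r}(x_0)|.
\]
Only after this can $\delta$ be chosen small enough to make the initial level-set measure small enough to launch the De Giorgi iteration of Step~2. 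The raw hypothesis $|B_r\cap\{u\geq k\}|\geq\tau|B_r|$ does \emph{not} by itself start the iteration. The zero-order term enters Step~1 as well, and there its treatment is different from what you wrote: since the test function is $w^{-1}\psi^2$, the contribution is
\[
\int_{B_{7r}(x_0)}a(x)\,w\cdot w^{-1}\psi^2\,dx
=\int_{B_{7r}(x_0)}a(x)\,\psi^2\,dx
\leq c(n)\,\|a\|_\infty\,r^{n}\leq c(n)\,\|a\|_\infty\,r^{n-2},
\]
using $r\leq 1$. This matches the order of the other terms in \cite[eq.\,(7.11)]{GarainKinnunen} and Step~1 goes through.

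\medskip

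\textbf{Second gap: your bound in Step~2 is too crude.} In the De Giorgi iteration one needs every term on the right-hand side of the Caccioppoli estimate to carry the factor $|B_{\rho_j}(x_0)\cap\{u<k_j\}|$, so that the normalized recursion reads $Y_{j+1}\leq C\,b^j\,Y_j^{1+\alpha}$. Your estimate
\[
\int_{B_r(x_0)}a(x)\,u\,w\,\psi^2\,dx\leq c_n\,\|a\|_\infty\,k^2\,r^n
\]
produces, after normalization, a \emph{constant} additive term on the right, and the iteration $Y_{j+1}\leq C\,b^j\,Y_j^{1+\alpha}+C'$ does not force $Y_j\to 0$. The fix is immediate from your own remark that $w_j=(u-k_j)_-$ is supported in $\{u<k_j\}$: on that set $0\leq u<k_j$ and $0\leq w_j\leq k_j$, hence
\[
\int_{B_{\rho_j}(x_0)}a(x)\,u\,w_j\,\psi_j^2\,dx
\leq \|a\|_\infty\,k_j^2\,\bigl|B_{\rho_j}(x_0)\cap\{u<k_j\}\bigr|
\leq \|a\|_\infty\,r^{-2}\,k_j^2\,\bigl|B_{\rho_j}(x_0)\cap\{u<k_j\}\bigr|,
\]
again using $r\leq 1$. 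This is exactly of the same form as the estimates for $J_1,J_2,J_3$ in \cite{GarainKinnunen}, and the iteration then closes as in the paper. With these two corrections your outline coincides with the paper's proof.
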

\begin{proof}
As in the proof of \cite[Lemma 7.1]{GarainKinnunen}, we proceed by steps.
\medskip

\textsc{Step 1:} In this step we prove that there exists $c=c(n,s,a)>0$ such that 
\begin{equation}\label{eq:7.3GK}
\begin{split}
& \left|B_{6r}(x_0) \cap \left\{ u \leq 2\delta k - \dfrac{1}{2}\left(\dfrac{r}{R}\right)^2 \mathrm{Tail}(u_{-};x_0,R) -\eta\right\}\right| \\
 &\qquad\qquad \leq\dfrac{c}{\tau \, \ln((2\delta)^{-1})} \, |B_{6r}(x_0)|,
\end{split}
\end{equation}
\noindent for every $\delta \in (0,\tfrac{1}{4})$ and for every $\eta>0$.
 We explicitly note that this is the analogous of \cite[Equation (7.3)]{GarainKinnunen}.
 To prove \eqref{eq:7.3GK}, we fix $\psi \in C^{\infty}_{0}(B_{7r}(x_0))$ such that 
\begin{itemize}
\item $0\leq \psi \leq 1$ in $B_{7r}(x_0)$;
\vspace*{0.05cm}
\item $\psi =1$ in $B_{6r}(x_0)$;
\vspace*{0.05cm}
\item $|\nabla \psi| \leq \tfrac{8}{r}$ in $B_{7r}(x_0)$. 
\end{itemize}
We then take $w:= u + t_{\eta}$, where
\begin{equation*}
t_{\eta}:= \dfrac{1}{2}\left(\dfrac{r}{R}\right)^2 \mathrm{Tail}(u_{-};x_0,R) + \eta,
\end{equation*}
\noindent and we notice that, since \emph{we assuming that $a\geq 0$ a.e.\,in $\Omega$},
this function $w$ is a weak \emph{supersolution} of 
equation \eqref{eq:withZeroOrder} as well. Therefore, 
by using the function 
$$\phi := w^{-1}\psi^2\in\mathcal{X}^{1,2}_+(B_{7r}(x_0))$$ 
as a test function
in \eqref{eq:defweaksubsuperzeroOrd} (with $u = w$), we get (recall $\e \leq 1$)
\begin{equation*}
\begin{aligned}
& 0 \leq \mathcal{B}_{\e,B_{7r}(x_0)}(w,\phi)+\int_{B_{7r}(x_0)}a(x)w\phi\,dx \\
& \qquad \leq \int_{B_{7r}(x_0)}\nabla w \cdot \nabla (w^{-1}\psi^2)\, dx \\
& \qquad\qquad + \iint_{B_{8r}(x_0)\times B_{8r}(x_0)}\dfrac{(w(x)-w(y))(\phi(x)-\phi(y))}{|x-y|^{n+2s}}\, dxdy\\
&\qquad\qquad +2 \int_{B_{8r}(x_0)}\int_{\mathbb{R}^{n}\setminus B_{8r}(x_0)}
\dfrac{(w(x)-w(y))\phi(x)}{|x-y|^{n+2s}}\,dx\,dy \\
& \qquad\qquad+ \int_{B_{7r}(x_0)}a(x)w(x)\dfrac{\psi^2(x)}{w(x)}\, dx\\
&\qquad \equiv I_1 + I_2 + I_3 + I_4.
\end{aligned}
\end{equation*}
Taking from \cite{GarainKinnunen} the estimates for the first three integrals, we are left with
\begin{equation*}
I_4 \leq \|a\|_{\infty}\int_{B_{8r}(x_0)}\psi^2(x)\, dx \leq c(n,\|a\|_{\infty}) r^{n-2},
\end{equation*}
\noindent where we have also used the fact that $r \leq 1$.
Combining this estimate with those of $I_i$ ($i=1,2,3$), we then get \cite[equation (7.11)]{GarainKinnunen} which in turn gives \eqref{eq:7.3GK}.
\medskip

\textsc{Step 2:} In this step we prove the following fact: 
\emph{for every $\eta>0$ there exists 
a positive constant $\delta = \delta(n,s,\tau) \in (0,1/4)$ such that }
\begin{equation}\label{eq:7.16GK}
\essinf_{B_{4r}(x_0)} u \geq \delta k - \left( \dfrac{r}{R}\right)^{2} \mathrm{Tail}(u_{-};x_0,R) -2\eta,
\end{equation}
 which is precisely \cite[equation (7.16)]{GarainKinnunen}.
To begin with, following \cite{GarainKinnunen} we can assume without loss of generality that
\begin{equation*}
\delta k \geq \left(\dfrac{r}{R}\right)^2 \mathrm{Tail}(u_{-};x_0,R)+2\eta,
\end{equation*}
\noindent otherwise \eqref{eq:7.16GK} is trivially satisfied (since $u\geq 0$ in $B_R(x_0)$). 
Let then $\varrho \in [r,6r]$ and let $\psi \in C_0^{\infty}(B_{\varrho}(x_0))$ be a cut-off function such that 
$$\text{$0\leq \psi\leq 1$ in $B_{\varrho}(x_0)$}.$$
We now arbitrarily fix a number $l\in (\delta k, 2\delta k)$, and we exploit the Caccioppoli-type
i\-ne\-qu\-ality in Lemma \ref{lem:Caccioppolitype} with $w = (u-l)_-\leq l+u_-$: this gives 
\begin{equation}\label{eq:7.18GK}
\begin{aligned}
&\int_{B_{\varrho}(x_0)}\psi^2 |\nabla w|^2 \, dx + \e\,
 \iint_{B_{\varrho}(x_0) \times B_{\varrho}(x_0)}\dfrac{|w(x)\psi(x)-w(y)\psi(y)|^2}{|x-y|^{n+2s}}\, dxdy\\
&\qquad \leq c \int_{B_{\varrho}(x_0)}w^2 |\nabla \psi|^2\, dx \\
& \qquad\qquad + c \iint_{B_{\varrho}(x_0) \times B_{\varrho}(x_0)}\max\{w(x),w(y)\}\dfrac{|\psi(x)-\psi(y)|^2}{|x-y|^{n+2s}}\, dxdy\\
&\qquad \qquad +c\,l\,|B_{\varrho}(x_0)\cap \{u<l\}|\cdot
 \esssup_{x\in \mathrm{supp}(\psi)}
 \int_{\mathbb{R}^n \setminus B_{\varrho}(x_0)}\dfrac{l+u_-(y)}{|x-y|^{n+2s}}\, dy \, \\
&\qquad\qquad + c \, \int_{B_{\varrho}(x_0)}a(x) uw\psi^2\, dx\\
&\qquad \equiv J_1 + J_2 + J_3 + c \, \int_{B_{\varrho}(x_0)}a(x) uw\psi^2\, dx.
\end{aligned}
\end{equation}
which is the analog of \cite[Equation (7.18)]{GarainKinnunen} in the present context.
With \eqref{eq:7.18GK} at hand, we apply \cite[Lemma 4.1]{DiBenedetto} to complete the proof
of \eqref{eq:7.16GK}. 
\vspace*{0.05cm}

To this aim, we have to introduce a bit of notation as it is used in \cite{GarainKinnunen}: for $j=0,1,\ldots$, we denote 
\begin{equation*}
l=k_j=\delta k + 2^{-j-1}\delta k, \quad \varrho=\varrho_j=4r + 2^{1-j}r, \quad \hat{\varrho}_j = 
\dfrac{\varrho_j + \varrho_{j+1}}{2}.
\end{equation*}
With these choices, and since $l\in (\delta k,2\delta k)$, for all $j\geq 0$ we have
\vspace*{0.1cm}

a)\,\,$\varrho_j, \hat{\varrho}_j \in [4r,6r]$ and $\varrho_{j+1}<\hat{\varrho}_j < \varrho_j$;
\vspace*{0.05cm}

b)\,\,$k_j - k_{j+1} = 2^{-j-2}\delta k \geq 2^{-j-3}k_j$
\vspace*{0.1cm}

\noindent We now set $B_j := B_{\varrho_j}(x_0)$ and $\hat{B}_j:= B_{\hat{\varrho}_j}(x_0)$ and we observe that
$$w_j := (u-k_j)_{-} \geq 2^{-j-3}k_j \chi_{\{u<k_{j+1}\}}.$$
Finally, we take a sequence of cut-off functions $\{\psi_j\}_j
\subseteq C^{\infty}_{0}(\hat{B}_{\varrho_j})$ such that
\begin{itemize}
\item $0\leq \psi_j\leq 1$ in $\hat{B}_j$;
\vspace*{0.05cm}
\item $\psi_j =1$ in $B_{j+1}$;
\vspace*{0.05cm}
\item $|\nabla \psi_j|\leq 2^{j+3}/r$.
\end{itemize}
We then choose $\psi = \psi_j$ and $w=w_j$ in \eqref{eq:7.18GK} and we inherit from \cite{GarainKinnunen} the estimates of $J_1$, $J_2$ and $J_3$, which we report here below for completeness:
\begin{equation} \label{eq:estimatesJi}
\begin{split}
 \mathrm{i)}&\,\,J_1, J_2 \leq c 2^{2j}k_j^2 r^{-2} |B_j \cap \{u<k_j\}| \\
 \mathrm{ii)}&\,\,J_3 \leq c 2^{j(n+2s)}k_j^2 r^{-2} |B_j \cap \{u<k_j\}|
 \end{split}
\end{equation}
On the other hand, recalling that $r\leq 1$, we also have
\begin{equation*}
\begin{aligned}
& \int_{B_{\varrho}(x_0)}a(x) uw\psi^2\, dx \leq \|a\|_{\infty}\int_{B_j}u w_j \psi_j^2 \,dx \\
& \qquad \leq  \|a\|_{\infty} \int_{B_j \cap \{u < k_j\}}k_j^2 \psi^2_j \, dx 
\leq \|a\|_{\infty}\cdot k_j^2 |B_j \cap \{u<k_j\}| \\
& \qquad \leq \|a\|_{\infty}\cdot r^{-2} k_j^2 |B_j \cap \{u<k_j\}|.
\end{aligned}
\end{equation*}
By combining this last estimate with \eqref{eq:estimatesJi}, we can then
follow the arguments of the proof of \cite[Lemma 7.1]{GarainKinnunen}, and we obtain
the claimed \eqref{eq:7.16GK}. 
\vspace*{0.1cm}

Finally, as a consequence of \eqref{eq:7.16GK} we obtain \eqref{eq:explansionConclusion}.
\end{proof}
\noindent With Lemma \ref{lem:expansion} at hand, we can prove
Proposition \ref{prop:weakHarnack}.
 \begin{proof}[Proof (of Proposition \ref{prop:weakHarnack}).] 
 The proof of \eqref{eq:8.1GK} can be obtained by
 arguing as in the proof
 of \cite[Lemma 4.1]{CastroKuusiPal}, by exploiting Lemma \ref{lem:expansion} in place
 of
 \cite[Lemma 3.2]{CastroKuusiPal}.
 
 We explicitly stress that the exponent $Q$ is independent of $\e\in(0,1]$, as the
 same is true of all the constants appearing in the previous lemmas.
\end{proof}
From Proposition \ref{prop:weakHarnack}, and using a classical covering argument, we obtain the following corollary.
\begin{corollary} \label{cor:BrezisNirenbergpernoi}
 Let $a\in L^\infty(\Omega),\,\text{$a\geq 0$ a.e.\,in $\Omega$}$. Moreover,
let $u$ be a weak superso\-lu\-tion of 
equation \eqref{eq:withZeroOrder} such that 
$u\geq 0$ a.e.\,in $\R^n\setminus\Omega$ and
$$\text{$u > 0$ a.e.\,on every open ball $B\Subset\Omega$}.$$
Then, for every open set $\Oo\Subset\Omega$ there exists $C = C(\Oo,u) > 0$ such that
$$\text{$u\geq C(\Oo,u) > 0$ a.e.\,in $\Oo$}.$$
\end{corollary}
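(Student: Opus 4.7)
The plan is a standard finite covering argument built on top of Proposition \ref{prop:weakHarnack}; the key observation is that under the present hypotheses the tail term in \eqref{eq:8.1GK} vanishes identically, after which the conclusion follows at once by compactness.

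First I would check that $u\geq 0$ a.e. on the whole of $\R^n$. By assumption $u\geq 0$ a.e. in $\R^n\setminus\Omega$; on the other hand, exhausting $\Omega$ by a countable union of open balls compactly contained in $\Omega$ and using the pointwise positivity hypothesis on each such ball, one obtains $u>0$ a.e. in $\Omega$. Hence $u_-\equiv 0$ a.e. on $\R^n$, and in particular $\mathrm{Tail}(u_-;x_0,R)=0$ for every admissible pair $(x_0,R)$.

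Next I would fix the open set $\Oo\Subset\Omega$ and pick $r\in (0,1]$ so small that $16r<\mathrm{dist}(\overline{\Oo},\R^n\setminus\Omega)$; this ensures $B_{16r}(x_0)\Subset\Omega$ for every $x_0\in\overline{\Oo}$. By compactness of $\overline{\Oo}$ I extract finitely many centers $x_1,\ldots,x_N\in\overline{\Oo}$ such that $\overline{\Oo}\subseteq\bigcup_{i=1}^N B_r(x_i)$.

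Finally, I would apply Proposition \ref{prop:weakHarnack} on each ball $B_r(x_i)\subset B_{16r}(x_i)\Subset\Omega$. With the tail gone, \eqref{eq:8.1GK} reduces to
$$\left(\fint_{B_r(x_i)}u^\eta\,dx\right)^{1/\eta}\leq c\,\essinf_{B_r(x_i)}u,$$
with $\eta,c>0$ depending only on $n,s,a$. Since $u>0$ a.e. on the ball $B_r(x_i)\Subset\Omega$, the left-hand side is a strictly positive real number $\kappa_i$, hence $\essinf_{B_r(x_i)}u\geq \kappa_i/c>0$. Taking $C(\Oo,u):=c^{-1}\min_{i=1,\ldots,N}\kappa_i>0$ one obtains $u\geq C(\Oo,u)$ a.e. on $\bigcup_i B_r(x_i)\supseteq\Oo$. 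The only substantive step is the tail-vanishing observation in the first paragraph; the rest is a routine compactness argument, and this is precisely why the extension of the weak Harnack inequality to operators with a zero-order term developed in Section \ref{sec:weakharnack} is exactly what is needed here.
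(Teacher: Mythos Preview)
Your proposal is correct and is precisely the ``classical covering argument'' that the paper invokes without spelling out. The paper's proof consists of a single sentence pointing to Proposition~\ref{prop:weakHarnack} plus a covering argument; your write-up supplies exactly those details, including the essential observation that the global nonnegativity of $u$ kills the tail term.
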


\noindent\textbf{ii)\,\,The purely singular problem for $\LL$.}
The second auxiliary result we need concerns the 
\emph{un\-per\-tur\-bed}, purely singular version of problem \eqref{eq:Problem}$_\lambda$, that is,
\begin{equation} \label{eq:NoPerturbation} 
 \begin{cases}
 \LL u = \lambda u^{-\gamma} & \textrm{in $\Omega$},\\
 u>0 & \textrm{in $\Omega$},\\
 u= 0 & \textrm{in $\mathbb{R}^n \setminus \Omega$},
 \end{cases}
\end{equation}
In this context, we first prove the following proposition.
\begin{proposition}\label{prop:esistenzaGarain}
 Let $\e\in(0,1]$ be fixed. Moreover, let $\lambda > 0$ and $\gamma\in(0,1)$.
 
 Then, the following assertions hold.
 \begin{itemize}
  \item[i)] There exists a \emph{unique weak solution} $w_{\lambda,\e} 
  \in \mathcal{X}^{1,2}(\Omega)$
  of problem \eqref{eq:NoPerturbation}, which is the unique \emph{global minimizer of the functional}
  \begin{equation} \label{eq:funJlambdaSing}
   J_{\lambda,\e}(u) = 
  \frac{1}{2}\rho_\e(u)^2-\frac{\lambda}{1-\gamma}\int_\Omega|u|^{1-\gamma}\,dx.
  \end{equation}
  Moreover, $J_{\lambda,\e}(w_{\lambda,\e}) < 0$.
  \medskip
  
  \item[ii)]  $w_{\lambda,\e}\in L^\infty(\Omega)$, and there exists $c_1 > 0$ such that
  $$\|w_{\lambda,\e}\|_{L^\infty(\Omega)}\leq c_1\qquad\text{for every $\e \in(0,1]$}.$$
  \item[iii)] There exists $\e_0 > 0$
  with the following property:
  \emph{given any ball $B_{R}(x_0)\subseteq\Omega$ and any $0<r\leq \min\{1,R\}$, for every $0<\e\leq\e_0$
  we have}
  $$w_{\lambda,\e}\geq c_2\quad\text{a.e.\,on $B_r(x_0)$},$$
  for some constant $c_2 > 0$ independent of $\e$.
 \end{itemize}
 \end{proposition}
 We explicitly stress that, since problem \eqref{eq:NoPerturbation} is of the form
 \eqref{eq:pbgeneral} {(}with $f\equiv 0${)}, the definition of weak solution of \eqref{eq:NoPerturbation}
 is given in Definition \ref{def:weaksol}.
 \begin{proof}
 We prove separately the three assertions.
 \medskip
 
 \noindent i)\,\,The functional $J_{\lambda, \e}$ is weakly lower semicontinuous on $\mathcal{X}^{1,2}(\Omega)$ (for every $\e \in (0,1]$) and coercive on the same space, indeed, 
 \begin{equation*}
 J_{\lambda,\e}(u)\geq \dfrac{\rho_{\e}(u)^2}{2} - \dfrac{\lambda C_2}{1-\gamma}\|u\|^{1-\gamma}_{H^{1}_{0}(\Omega)} \geq  \dfrac{\|u\|^{2}_{H^{1}_{0}(\Omega)}}{2} - \dfrac{\lambda C}{1-\gamma}\|u\|^{1-\gamma}_{H^{1}_{0}(\Omega)},
 \end{equation*}
 \noindent where the positive constant $C$ depends only on $n$ and $\Omega$. Moreover, for $\delta>0$ small enough, and denoting with $e_{1,\e}$ the first Dirichlet eigenfunction of $\mathcal{L}_{\e}$, we also have
 \begin{equation*}
 J_{\lambda,\e}(\delta \, e_{1,\e}) < 0 = J_{\lambda,\e}(0),
 \end{equation*}
 \noindent hence $J_{\lambda,\e}$ has a global minimizer $w_{\lambda,\e}\not\equiv 0$.
 \medskip
 
 \noindent ii)\,\,The (global) boundedness of $w_{\lambda,\e}$ is proved
 in \cite[Lemma 2.2]{Garain}; in particular, since the proof in \cite{Garain}
 exploits the classical Stampacchia method and it is performed
 by getting rid of the nonlocal part of $\LL$, one can easily derive that
 the upper bound on $\|w_{\lambda_\e}\|_{L^\infty(\Omega)}$ is independent of $\e\in(0,1]$.
 In fact, we have
 $$0<w_{\lambda,\e}\leq 1+d,$$
 where the constant $d$ satisfies (for some $q > 2$)
 $$d^q = \lambda\,C|\{w_{\lambda,\e} > 1\}|^{q-2}2^{\frac{q(q-1)}{q-2}}\leq
 \lambda\,C|\Omega|^{q-2}2^{\frac{q(q-1)}{q-2}},$$
 and $C > 0$ only depends on $n,\gamma$ and on the measure of $\Omega$.
 \medskip
 
 \noindent iii)\,\,We split the proof of this assertion into three
 steps.
 \medskip 
 
 \textsc{Step 1).} In this first step, we prove that 
 \begin{equation} \label{eq:wlambdaeunfH01}
  \|w_{\lambda,\e}\|_{H^1_0(\Omega)}\leq \lambda\,c\quad\text{for all $\e\in(0,1]$}.
 \end{equation}
 for some constant $c > 0$ only depending on $\Omega$ and on $\gamma$.
 
  Indeed, since $w_{\lambda,\e}\in\mathcal{X}^{1,2}(\Omega)$ 
 is a weak solution
 of problem \eqref{eq:NoPerturbation}  (in the sense of
 De\-fi\-nition \ref{def:weaksol}), by choosing $w_{\lambda,\e}$ as a test
 function in \eqref{eq:weakformSol}, we get
  \begin{align*}
    \|w_{\lambda,\e}\|_{H^1_0(\Omega)}^2 &
    \leq \rho_\e(w_{\lambda,\e})^2
    = \frac{\lambda}{1-\gamma	}\int_{\Omega}w_{\lambda,\e}^{1-\gamma}\,dx \\
    & (\text{using H\"older's and Poincar\'e inequality}) \\   
    & \leq \frac{\lambda|\Omega|^{1-\frac{1-\gamma}{2}}}{1-\gamma}\|w_{\lambda,\e}\|^{1-\gamma}
    \leq \lambda\,c(\Omega,\gamma)\|w_{\lambda,\e}\|^{1-\gamma}_{H^1_0(\Omega)},
  \end{align*}
  where $c(\Omega,\gamma) > 0$ is a suitable constant only depending on $\Omega$ and on $\gamma$.
  From this, since we know that $w_{\lambda,\e}\not\equiv 0$, we immediately derive the
  claimed \eqref{eq:wlambdaeunfH01}.
  \medskip
  
  \textsc{Step 2).} In this second step we prove that, as $\e\to 0^+$, we have
  \begin{itemize}
   \item[a)] $w_{\lambda,\e}\to w_{\lambda}$ weakly in $\mathcal{X}^{1,2}(\Omega)$;
   \item[b)] $w_{\lambda,\e}\to w_{\lambda}$ strongly in $L^m(\Omega)$ for every $1\leq m < 2^*$;
  \end{itemize}
  where $w_{\lambda}\in H^1_0(\Omega)$ is the unique solution of the purely local problem
  \begin{equation} \label{eq:singularsololocale}
   \begin{cases}
 -\Delta u = \lambda u^{-\gamma} & \textrm{in $\Omega$},\\
 u>0 & \textrm{in $\Omega$},\\
 u= 0 & \textrm{in $\mathbb{R}^n \setminus \Omega$},
 \end{cases}
 \end{equation}
 To this end, we let $\{\e_j\}_j\subseteq(0,1]$ be any sequence converging to $0$ as $j\to+\infty$, and 
 we arbitrarily choose a subsequence
 $\{\e_{j_k}\}_k$  of $\{\e_j\}_j$. On account of
 \eqref{eq:wlambdaeunfH01}, and using
 Remark \ref{rem:spaceX12}\,-\,4), we know that there exists
 some function $\bar{w}\in\mathcal{X}^{1,2}(\Omega)$ such that, as $k\to+\infty$ and
 up to possibly 
 choosing a further subsequence,
 \begin{itemize}
   \item $w_{\lambda,\e_{j_k}}\to \bar{w}$ weakly in $\mathcal{X}^{1,2}(\Omega)$;
   \item $w_{\lambda,\e_{j_k}}\to \bar{w}$ strongly in $L^m(\Omega)$ for every $1\leq m < 2^*$;
  \end{itemize}
  from this, since $w_{\lambda,\e}$ is a global minimizer for $J_{\lambda,\e}$, we get
  \begin{align*}
   J_{\lambda}(\bar{w})
   & = \frac{1}{2}\|\bar{w}\|^2_{H^1_0(\Omega)}
   -\frac{\lambda}{1-\gamma}\int_\Omega |\bar{w}|^{1-\gamma}\,dx
   \\
   & \leq \liminf_{k\to+\infty}
   J_{\lambda,\e_{j_k}}(w_{\lambda,\e_{j_k}}) \\
   & \leq J_{\lambda}(\varphi)\quad\text{for every $\varphi\in H_0^1(\Omega)$},
  \end{align*}
  and thus $\bar{w}$ is a global minimizer for the functional $J_{\lambda}$
  naturally associated with the purely local, singular problem 
  \eqref{eq:singularsololocale}. As a consequence, since $J_{\lambda}$ as a
  unique global minimizer which is the unique solution $w_\lambda$ of \eqref{eq:singularsololocale},
  we derive that
  $$\bar{w} = w_{\lambda}.$$
  Due to arbitrariness of the sequence 
  $\{\e_j\}_j\subseteq(0,1]$ of its subsequence
 $\{\e_{j_k}\}_k$, and since the limit
 \emph{is always the same}, we conclude the validity of a)\,-\,b).
 \medskip
 
 \textsc{Step 3)}\,\,In this last step, we complete the proof of
 the assertion. First of all we observe that, since
 $w_{\lambda,\e}$ is a weak solution of problem \eqref{eq:NoPerturbation},
 we readily derive that $w_{\lambda,\e}$ is a also \emph{weak supersolution}
 (in the sense of Definition \ref{def:weaksubsolZeroOrder}) of
 $$\LL u = 0\quad\text{in $\Omega$};$$
 thus, by Proposition \ref{prop:weakHarnack} (and since $w_{\lambda,\e}\geq 0$ a.e.\,in $\R^n$) we get
 \begin{equation} \label{eq:HarnackwlambdaeI}
  w_{\lambda,\e}\geq c\,\left(\fint_{B_r(x_0)}w_{\lambda,\e}^Q \, dx\right)^{1/Q}\quad\text{a.e.\,on $B_r(x_0)$},
 \end{equation}
 where $c > 0$ does not depend on $\e$.
 On the other hand, since by \textsc{Step 2)} we know that $w_{\lambda,\e}\to w_\lambda$
 as $\e\to 0$ strongly in $L^m(\Omega)$ for every $1\leq m <2^*$, we have
 \begin{equation} \label{eq:convergenceMeanwlambdae}
  \lim_{\e\to 0^+}\fint_{B_r(x_0)}w_{\lambda,\e}^Q \, dx = \fint_{B_r(x_0)}w_{\lambda}^Q \, dx.
 \end{equation}
 Gathering \eqref{eq:HarnackwlambdaeI}-\eqref{eq:convergenceMeanwlambdae}, 
 and recalling that $w_\lambda$ is the unique solution of \eqref{eq:singularsololocale} (hence,
 $w_\lambda > 0$ a.e.\,in $\Omega$),
  we can then fined some $\e_0 > 0$ such that
  $$w_{\lambda,\e}\geq \frac{c}{2}\left(\fint_{B_r(x_0)}w_{\lambda}^Q \, dx\right)^{1/Q} > 0$$
  a.e.\,in $B_r(x_0)$ and \emph{for every $0<\e\leq \e_0$}. This ends the proof.
 \end{proof}
  We then conclude this paragraph (and the whole section) 
  with an apriori $L^\infty$-e\-sti\-mate for the weak solutions (provided they exist)
  of problem \eqref{eq:Problem}$_\lambda$.
 
\begin{theorem} \label{thm:uniformLinf}
Let $u\in \mathcal{X}^{1,2}(\Omega)$ be a weak solution of 
problem \eqref{eq:Problem}$_\lambda$
\emph{(}which implicitly depends on the fixed $\e\in(0,1]$\emph{)}, and suppose that
$$\text{$0<\lambda\leq\lambda_0$ for some $\lambda_0\geq 1$}.$$
Then, the following facts hold.
\begin{itemize}
 \item[i)] $u \in L^{\infty}(\Omega)$, and we have the apriori estimate
\begin{equation}\label{eq:Stima_dip_da_epsilon}
\|u\|_{L^{\infty}(\Omega)} \leq C \, \left(1 + \int_{\Omega}|u|^{2^{\ast} \beta_1}\, dx \right)^{\tfrac{1}{2^{\ast}(\beta_1 -1)}},
\end{equation}
where $\beta_{1}:= (2^{\ast}+1)/2$ and $C > 0$ only depends on $n,\lambda_0$ and $\Omega$.
\medskip

\item[ii)] There exists $r_1 \in(0,1)$ such that, if $\|u\|_{H^{1}_{0}(\Omega)}\leq r_1$,
then
\begin{equation}\label{eq:Stima_indip_da_epsilon}
\|u\|_{L^{\infty}(\Omega)} \leq C.
\end{equation}
for some constant $C = C(n,\lambda_0,\Omega) > 0$, but independent of $\e$.
\end{itemize} 
\end{theorem}
\begin{proof}
We prove the two assertions separately.
\medskip

i)\,\,We follow a classical Moser-type iteration scheme; for simplicity, we adopt the notation used in \cite[Theorem 1.1]{SVWZ2} in the case of a nonlinearity such that
$$|g(x,t)|\leq c(1+|t|^{2^{\ast}-1}).$$
We will highlight only the difference occurring due to the presence of the singular term $u^{-\gamma}$.\\
We first prove that $u\in L^{2^{\ast}\,\beta_{1}}(\Omega)$, where $\beta_{1}:=\tfrac{2^{\ast}+1}{2}$.\\
For $\beta>1$ and $T>0$ define the function
\begin{equation*}
\varphi(t):= \left\{ \begin{array}{rl}
-\beta T^{\beta -1}(t+T) + T^{\beta}, & t \leq -T\\
|t|^{\beta}, & -T<t <T\\
\beta T^{\beta -1}(t-T) + T^{\beta}, & t \geq T.
\end{array}\right.
\end{equation*}
Arguing similarly to \cite{SVWZ}, one can show that $\varphi \in \mathcal{X}^{1,2}(\Omega)$.
Repeating the first part of the argument of \cite[Lemma 3.2]{SVWZ2}, we find that
\begin{equation}\label{eq:3.5_Val2}
\begin{aligned}
& \|\varphi(u)\|^2_{L^{2^{\ast}}(\Omega)} \leq C(n,\Omega) \, \int_{\mathbb{R}^{n}}\varphi(u)\varphi'(u)\left(\lambda u^{-\gamma}+u^{2^{\ast}-1}\right)\, dx\\
&\qquad\leq C(n,\Omega)\Big(\lambda \beta \int_{\Omega}|u|^{2\beta -1-\gamma}\, dx +\beta \int_{\Omega}(\varphi(u))^2|u|^{2^{\ast}-2}\, dx\Big)\\
&\qquad\leq \lambda_0
C(n,\Omega)\beta\Big(1  +\int_{\Omega}|u|^{2\beta -1}\, dx + 
  \int_{\Omega}(\varphi(u))^2|u|^{2^{\ast}-2}\, dx\Big),
\end{aligned}
\end{equation}
\noindent which is analogous to \cite[Equation (3.5)]{SVWZ2}. Now, for every $R>0$
\begin{equation}\label{eq:3.6_Val2}
\begin{aligned}
& \int_{\Omega}(\varphi(u))^2|u|^{2^{\ast}-2}\, dx \\
& \qquad = \int_{\{|u|\leq R\}}(\varphi(u))^2|u|^{2^{\ast}-2}\, dx + \int_{\{|u|> R\}}(\varphi(u))^2|u|^{2^{\ast}-2}\, dx\\
&\qquad \leq R^{2^*-1}\int_{\{|u|\leq R\}}\dfrac{(\varphi(u))^2}{|u|}\, dx \\
& \qquad\qquad+ \left(\int_{\Omega}(\varphi(u))^{2^{\ast}}\, dx \right)^{2/2^{\ast}} \left(\int_{\{|u|>R\}}|u|^{2^{\ast}}\, dx\right)^{(2^{\ast}-2)/2^{\ast}}
\end{aligned}
\end{equation}
\noindent and one can follow verbatim the proof of \cite[Lemma 3.2]{SVWZ2} to get $u\in L^{2^{\ast}\,\beta_{1}}(\Omega)$ after choosing $R$ properly and letting $T\to +\infty$. From here the proof is exactly the same in \cite[Theorem 1.1]{SVWZ2}. It is easy to recognize that the upper bound found in \eqref{eq:Stima_dip_da_epsilon} depends on $\e$ in a non explicit way (due to the choice of $R$).
\medskip

ii)\,\,Assume now that $\|u\|_{H^{1}_{0}(\Omega)}\leq r_1$ for some
$r_1 >0$ (to be chosen conveniently small in a moment), 
and let us go back to \eqref{eq:3.6_Val2}: choosing $R = 1$, we get
\begin{equation}\label{eq:3.6_Val2_con_R=1}
\begin{aligned}
&\int_{\Omega}(\varphi(u))^2|u|^{2^{\ast}-2}\, dx \\
& \qquad
 = \int_{\{|u|\leq 1\}}(\varphi(u))^2|u|^{2^{\ast}-2}\, dx + \int_{\{|u|> 1\}}(\varphi(u))^2|u|^{2^{\ast}-2}\, dx\\
&\qquad\leq \int_{\{|u|\leq 1\}}\dfrac{(\varphi(u))^2}{|u|}\,dx + 
\|\varphi(u)\|^2_{L^{2^*}(\Omega)}\|u\|^{2^*-2}_{L^{2^*}(\Omega)} \\
& \qquad\leq \int_{\{|u|\leq 1\}}\dfrac{(\varphi(u))^2}{|u|}\, dx + 
S_n^{2-2^*}r_1\|\varphi(u)\|^2_{L^{2^*}(\Omega)},
\end{aligned}
\end{equation}
where $S_n > 0$ is the best Sobolev constant. Now, choosing $r_1>0$ such that 
\begin{equation*}
\lambda_0C(n,\Omega)S_n^{2-2^*}\beta r_1 <\dfrac{1}{2},
\end{equation*}
\noindent we can reabsorb the last term of \eqref{eq:3.6_Val2_con_R=1} on the left hand side of \eqref{eq:3.5_Val2}, and after passing to the limit as $T\to +\infty$ (as in \cite{SVWZ2}) we get
\begin{align*}
\left(\int_{\Omega}|u|^{2^{\ast}\beta_1}\, dx\right)^{2/2^{\ast}} &
 \leq 4\lambda_0\hat{C}(n,\Omega)\beta\Big(1+\int_{\Omega}|u|^{2^{\ast}}\, dx\Big) \\
 & \leq 4\lambda_0\hat{C}(n,\Omega)\beta \, r_1^{2^*} \leq C(n,\Omega,\beta,\lambda_0)<+\infty.
\end{align*}
From this, one can perform the same iterative argument used in 
the proof of \cite[Theorem 1.1]{SVWZ2}, and the demonstration is complete.
\end{proof}
\section{Proof of Theorem \ref{thm:main} } \label{sec:proofThm}
Thanks to all the results established so far, we are finally in a position to provide
the full proof of Theorem \ref{thm:main}. In doing this, we mainly
follow the approach in \cite{Haitao}; moreover, in order to keep the exposition as clear as possible,
we split such a proof into several independent results. 
\medskip

\noindent\textbf{1) Existence of a first solution.} To begin with, we define
 \begin{equation}\label{eq:DefinitionLambda}
 \Lambda_{\e} := \sup \{ \lambda >0: \eqref{eq:Problem}_\lambda \textrm{ admits a weak solution}\}.
 \end{equation}
 We then turn to prove in this first part of the section the following facts:
 \vspace*{0.1cm}
 
 a)\,\,$\Lambda_{\e}$ is well-defined and $\Lambda < +\infty$;
 \vspace*{0.05cm}
 
 b)\,\,problem \eqref{eq:Problem}$_\lambda$ admits a weak solution for every $0<\lambda\leq \Lambda_{\e}$.
 \medskip
 
 \noindent Troughout what follows, we denote by $I_{\lambda,\e}$ the functional
 (depending on $\lambda$ and $\e$) naturally associated with
 problem \eqref{eq:Problem}$_\lambda$, that is,
 \begin{equation} \label{eq:functionalIlambdae}
I_{\lambda,\e}(u) := \dfrac{1}{2}\rho_{\e}(u)^2 - \dfrac{\lambda}{1-\gamma}\int_{\Omega}|u|^{1-\gamma}\, dx - \dfrac{1}{2^*}\int_{\Omega}|u|^{2^*}\, dx, \quad u \in \mathcal{X}^{1,2}(\Omega).
\end{equation}
On the other hand, we will avoid to keep tracking of the dependence on $\e$
of any possible of $\LL$, unless it is strictly needed (for instance,
if we need to choose $\e$ conveniently small, or if we need some estimates
uniform in $\e$).
\medskip

 We begin by proving assertion a).
 \begin{lemma} \label{lem:Lambdafinito}
  The following assertions hold.
  \begin{itemize}
   \item[1)] There exists $r_0 > 0$, only depending on the dimension $n$, with the following property:
   \emph{for every $0<r\leq r_0$ there exists $\lambda_* > 0$, only depending on $r,\gamma$ and on
   the measure of $\Omega$, such that problem
   \eqref{eq:Problem}$_\lambda$
 possesses at least one weak solution $u_{\lambda,\e}\in\mathcal{X}^{1,2}(\Omega)$
 for every $0<\lambda\leq\lambda_*$,
 further satisfying}
 \vspace*{0.1cm}
 
 \begin{itemize}
  \item[{i)}] $\|u_{\lambda,\e}\|_{H_0^1(\Omega)}\leq r$;
  \item[{ii)}] $u_{\lambda,\e}$ is a local minimizer of the functional $I_{\lambda,\e}$
  in \eqref{eq:functionalIlambdae}.
\end{itemize}
Hence, in particular, $\Lambda_\e\geq \lambda_* > 0$ for every $\e\in(0,1]$.
\vspace*{0.1cm}

   \item[2)] There exists $\Lambda_* > 0$, only depending on the measure of $\Omega$, such that
   $$\Lambda_\e\leq \Lambda_*\quad\text{for every $\e\in(0,1]$}.$$
  \end{itemize}
 \end{lemma}
 \begin{proof}
 We prove the two assertions separately.
 \medskip
 
 1)\,\,First of all we observe that, 
by the (local) Sobolev inequality and the H\"older inequality, 
for every $u\in\mathcal{X}^{1,2}(\Omega)$ we have the following estimates
\begin{equation} \label{eq:todeduce21Haitao}
\begin{split}
& \mathrm{a)}\,\,\int_\Omega |u|^{2^*}\,dx \leq C_1\|u\|^{2^*}_{H^1_0(\Omega)}; \\
& \mathrm{b)}\,\,\int_{\Omega}|u|^{1-\gamma}\, dx \leq 
|\Omega|^{1-(1-\gamma)/2^{*}}\|u\|_{L^{2^*}(\Omega)}^{(1-\gamma)/2^{*}} \leq C_2\|u\|^{1-\gamma}_{H^1_0(\Omega)};
\end{split}
\end{equation}
where $C_1 > 0$ is a constant only depending on $n$, while 
$C_2 > 0$ depends on $n,\gamma$ and on the
measure of $\Omega$; as a consequence, setting
$$B_{r}:= \{ u \in \mathcal{X}^{1,2}(\Omega): \|u\|_{H^1_0(\Omega)}\leq r\}$$
(and reminding that $\rho_{\e}(\cdot)\geq \|\cdot\|_{H^1_0(\Omega)}$,
see \eqref{eq:equivalenceuniforme}), it is possible to find
some number $r_0 > 0$, only depending on the dimension $n$,
such that
\begin{equation}\label{eq:2.1Haitao}
\left\{ \begin{array}{rl}
\tfrac{1}{2}\rho_{\e}(u)^2 - \tfrac{1}{2^*}\|u\|^{2^*}_{L^{2^*}(\Omega)}\geq r^2/4 & \textrm{for all } u \in \partial B_{r},\\[0.2cm]
\tfrac{1}{2}\rho_{\e}(u)^2 - \tfrac{1}{2^*}\|u\|^{2^*}_{L^{2^*}(\Omega)}\geq 0 & \textrm{for all } u \in B_{r}.
\end{array}\right.
\end{equation}
for every $0<r\leq r_0$. Then, if $r\in(0,r_0]$ is fixed,
by \eqref{eq:2.1Haitao} and
 \eqref{eq:todeduce21Haitao}\,-\,b) there exists some $\lambda_{\star}>0$,
 depending on $r$ and on $|\Omega|$ but \emph{independent of $\e$},
such that
\begin{equation}\label{eq:2.2Haitao}
\begin{split}
I_{\lambda,\e}\big|_{\partial B_{r_0}} & \geq \frac{r^2}{4} - \frac{\lambda\,C_2}{1-\gamma}r^{1-\gamma}
\geq \frac{r^2}{8} \quad \textrm{for every } \lambda \in (0,\lambda_{\star}],
\end{split}
\end{equation}
where $I_{\lambda,\e}$ is as in \eqref{eq:functionalIlambdae}.
For every fixed  $\lambda\in(0,\lambda_*]$, we now set 
$$c = c_{\lambda,\e} = \inf_{B_{r}}I_{\lambda,\e},$$ 
and we notice that $c<0$. Indeed, for every $v\not\equiv 0$, it holds that
\begin{equation*}
I_{\lambda,\e}(tv) = t^2 \rho_{\e}(v)^2 - \dfrac{\lambda_{\star}}{1-\gamma}t^{1-\gamma}\int_{\Omega}|v|^{1-\gamma}\, dx - \dfrac{t^{2^{*}}}{2^*}\int_{\Omega}|v|^{2^*}\, dx,
\end{equation*}
\noindent which becomes negative for $t>0$ small enough, since $0<1-\gamma<1$. The argument is now pretty standard. We first consider a minimizing sequence 
$\{u_j\}_j
\subseteq B_{r}$ re\-la\-ted to $c$ and we notice that,
since $\{u_j\}_j$ is bounded in $\mathcal{X}^{1,2}(\Omega)$
(see \eqref{eq:equivalenceuniforme}),
 it is possible to find $u_{\lambda,\e}\in\mathcal{X}^{1,2}(\Omega)$ such that, up to subsequences,
\begin{itemize}
\item $u_j \to u_{\lambda,\e}$ as $j \to +\infty$ weakly in $\mathcal{X}^{1,2}(\Omega)$;
\item $u_j \to u_{\lambda,\e}$ as $j \to +\infty$ strongly in $L^{m}(\Omega)$ for every $m \in [1,2^*)$;
\item $u_j \to u_{\lambda,\e}$ as $j \to +\infty$ pointwise a.e. in $\Omega$.
\end{itemize}
Moreover, since $I_{\lambda,\e}(|u|) \leq I_{\lambda,\e}(u)$, we may also assume that $u_j \geq 0$.
Thus, combining \eqref{eq:2.2Haitao} with the fact that $c_{\lambda,\e}<0$, 
we realize that there exists a constant $\vartheta_0>0$,
 independent of $j$,   such that
\begin{equation} \label{eq:boundrhoujtodeduce}
\|u_j\|_{H_0^1(\Omega)}\leq r - \vartheta_0.
\end{equation}
Now, by combining the
algebraic inequality $(a+b)^p\leq a^b+b^p$ (holding true for
all $a,b\geq 0$ and $0<p<1$) with the
H\"{o}lder inequality, as $j\to +\infty$ we have
\begin{equation*}
\begin{split}
\int_{\Omega}u_j^{1-\gamma} & \leq \int_{\Omega}u_{\lambda,\e}^{1-\gamma}+
\int_{\Omega}|u_j - u_{\lambda,\e}|^{1-\gamma}\,dx 
\\
& \leq \int_{\Omega}u_{\lambda,\e}^{1-\gamma}+ 
 C \|u_j-u_{\lambda,\e}\|^{1-\gamma}_{L^{2}(\Omega)} = \int_{\Omega}u_{\lambda,\e}^{1-\gamma} + o(1),
\end{split}
\end{equation*}
\noindent and similarly
\begin{equation*}
\int_{\Omega}u_{\lambda,\e}^{1-\gamma} \leq \int_{\Omega}u_{j}^{1-\gamma}+\int_{\Omega}|u_j - 
u_{\lambda,\e}|^{1-\gamma} = \int_{\Omega}u_{j}^{1-\gamma} + o(1),
\end{equation*}
\noindent which in turn implies
\begin{equation}\label{eq:2.3Haitao}
\int_{\Omega}u_{j}^{1-\gamma} \, dx = \int_{\Omega}u_{\lambda,\e}^{1-\gamma}\, dx + o(1), 
\quad \textrm{as } j \to +\infty.
\end{equation}
By Brezis-Lieb lemma, see \cite{BrezisLieb}, it is now well-known that
\begin{equation*}
\|u_j\|^{2^*}_{L^{2^*}(\Omega)} = \|u_{\lambda,\e}\|^{2^*}_{L^{2^*}(\Omega)} + \|u_j - u_{\lambda,\e}\|^{2^*}_{L^{2^*}(\Omega)} +o(1), \quad \textrm{as } j \to +\infty;
\end{equation*}
moreover, since $u_j\to u_{\lambda,\e}$ weakly in $\mathcal{X}^{1,2}(\Omega)$
(and recalling that $\rho_\e(\cdot)$ and $\rho(\cdot)$ 
are \emph{uniformly equivalent} on $\mathcal{X}^{1,2}(\Omega)$, see 
\eqref{eq:equivalenceuniforme}), we have
\begin{equation}\label{eq:2.5Haitao}
\rho_{\e}(u_j)^2 = \rho(u_{\lambda,\e})^2 + \rho_{\e}(u_j -u_{\lambda,\e})^2 + o(1), 
\quad \textrm{as } j \to +\infty.
\end{equation}
By combining \eqref{eq:2.5Haitao} and \eqref{eq:boundrhoujtodeduce}, 
it follows that $u_{\lambda,\e}\in B_{r}$ and that
$u_j - u_{\lambda,\e} \in B_{r}$ for big enough $j$, and this allows to use the second line of \eqref{eq:2.1Haitao} on $u_j - u_{\lambda,\e}$. 
Using now \eqref{eq:2.3Haitao}-\eqref{eq:2.5Haitao}, as $j\to +\infty$, we find
\begin{equation*}
\begin{aligned}
c_{\lambda,\e} &= I_{\lambda,\e}(u_j) + o(1)\\
&= I_{\lambda,\e}(u_{\star}) + \dfrac{1}{2}\rho(u_j - u_{\lambda,\e})^2 - \dfrac{1}{2^*}\|u_j - u_{\lambda,\e}\|^{2^*}_{L^{2^*}(\Omega)}+ o(1)\\
&\geq I_{\lambda,\e}(u_{\lambda,\e}) + o(1) \geq c_{\lambda,\e} + o(1),
\end{aligned}
\end{equation*}
\noindent which proves that $u_{\lambda,\e} \geq 0$, $u_{\lambda,\e}\not\equiv
0$ is a local minimizer of $I_{\lambda,\e}$ in the $\mathcal{X}^{1,2}(\Omega)$-to\-po\-logy.
From this, using the Strong Maximum Principle in \cite[Theorem 3.1]{BMV}, and arguing as in the proof of \cite[Lemma 2.1]{Haitao}, we conclude 
that $u_{\lambda,\e}$ is actually a weak solution of  \eqref{eq:Problem}$_\lambda$,
further satisfying i)\,-\,ii).
\medskip

2)\,\,Following \cite{Haitao}, we consider 
the \emph{first 
Dirichlet eigenvalue $\mu_{1,\e}$ of $\LL$} in $\Omega$, 
which is defined (through the usual variational formulation) as follows
$$\mu_{1,\e} = \min\big\{\rho_{\e}(u)^2:\,\text{$u\in\mathcal{X}^{1,2}(\Omega)$ and $\|u\|^2_{L^2(\Omega)} = 1$}\big\},$$
and we let $e_{1,\e}\in\mathcal{X}^{1,2}(\Omega)$ 
be the principal eigenfunction associated with $\mu_{1,\e}$, i.e.,
\vspace*{0.1cm}

a)\,\,$\|e_{1,\e}\|_{L^2(\Omega)} = 1$ and $e_{1,\e} > 0$ a.e.\,in $\Omega$; 
\vspace*{0.05cm}

b)\,\,$\rho_{\e}(e_{1,\e})^2 = \mu_1$.
\vspace*{0.1cm}

\noindent We refer, e.g., to \cite{BDVV3} for the proof of the existence of $e_1$.
We explicitly observe that, since $\rho_\e(\cdot)\geq \|\cdot\|_{H_0^1(\Omega)}$, we clearly have
$$\mu_{1,\e}\geq \hat{\mu}_1 = 
\min\big\{\|u\|_{H^1_0(\Omega)}^2:\,\text{$u\in\mathcal{X}^{1,2}(\Omega)$ 
and $\|u\|^2_{L^2(\Omega)} = 1$}\big\}$$
(note that $\hat{\mu}_1$ is the first Dirichlet eigenvalue of $-\Delta$ in $\Omega$).

Now, assuming that there exists
a weak solution $u\in\mathcal{X}^{1,2}(\Omega)$ of problem
\eqref{eq:Problem}$_\lambda$ (for some $\lambda > 0$),
and using this $e_{1,\e}$ as a test function in \eqref{eq:weakformSol}, we get
\begin{equation*}
\begin{split}
\mu_{1,0}\int_{\Omega}ue_{1,\e} \, dx
 & \geq \mu_{1\,e} \int_{\Omega}ue_{1,\e} \, dx = \mathcal{B}_{{\e}}(u,e_{1,\e}) \\
 & = \int_{\Omega}(\lambda u^{-\gamma} + u^{2^{*}-1})e_{1,\e} \, dx;
 \end{split}
\end{equation*}
thus, choosing $\Lambda_* > 0$ in such a way that
$$\Lambda_*t^{-\gamma} + t^{2^{*}-1} > 2 \mu_{1,0} t, \quad \textrm{for every } t>0$$
(notice that this choice of $\Lambda_* > 0$ only depends on $\mu_{1,0}$, and  it
is independent of $\e$),
we conclude that $\lambda < {\Lambda}_*$, and then $\Lambda < \Lambda_* <+\infty$, as desired.
 \end{proof}
%
 Now we have established Lemma \ref{lem:Lambdafinito}, we then turn to 
 prove assertion b), namely
 the existence of at least one weak solution of problem \eqref{eq:Problem}$_\lambda$ for
 every $0<\lambda\leq \Lambda_{\e}$. To this end, following \cite{Haitao}
 we first establish some  preliminary results.
 Throughout what follows, we tacitly understand that 
 $\e\in(0,1]$ is arbitrary \emph{but fixed}.
 \medskip
 
 To begin with, we prove the following simple yet important technical lemma.
 \begin{lemma} \label{lem:SMPsoprasotto}
  Let $w,u\in\mathcal{X}^{1,2}(\Omega)$ be a \emph{weak subsolution} 
  \emph{[}resp.\,\emph{weak supersolution}\emph{]}
  and a \emph{weak solution} of pro\-blem \eqref{eq:Problem}$_\lambda$, respectively.
  We assume that
  \begin{itemize}
   \item[a)] $w\leq u$ \emph{[}resp.\,$w\geq u$\emph{]} a.e.\,in $\Omega$;
   \item[b)] for every open set $\Oo\Subset\Omega$ there exists $C = C(\Oo,w) > 0$ such that
   $$\text{$w \geq C$ a.e.\,in $\Oo$}.$$
  \end{itemize}
  Then, either $w\equiv u$ or $w<u$ \emph{[}resp.\,$w > u$\emph{]} a.e.\,in $\Omega$.
 \end{lemma}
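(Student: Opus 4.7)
The plan is to reduce the assertion to a strong minimum principle for a linear equation of the form $\mathcal{L} v + a(x) v = 0$ that can be handled by the weak Harnack inequality of Proposition \ref{prop:weakHarnack}. I will treat the subsolution case ($w$ subsolution, $u$ solution, $w \leq u$); the supersolution case is symmetric. Set $v := u - w \in \mathcal{X}^{1,2}(\Omega)$, so that $v \geq 0$ a.e.\ in $\Omega$ and $v \equiv 0$ a.e.\ in $\mathbb{R}^n \setminus \Omega$, and the goal becomes to show that either $v \equiv 0$ or $v > 0$ a.e.\ in $\Omega$.

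First I would subtract the defining (in)equalities, obtaining, for every nonnegative $\varphi \in C_0^\infty(\Omega)$,
\[
\mathcal{B}_\rho(v,\varphi) \geq \lambda \int_\Omega (u^{-\gamma} - w^{-\gamma})\varphi\,dx + \int_\Omega (u^{2^*-1} - w^{2^*-1})\varphi\,dx.
\]
The critical difference is nonnegative since $w \leq u$ and may be discarded. For the singular term I would fix $\mathcal{O} \Subset \Omega$ and invoke hypothesis (b) to get $w \geq C(\mathcal{O},w) > 0$ on $\mathcal{O}$, hence $u \geq w \geq C$ as well; the mean value theorem applied to $t \mapsto t^{-\gamma}$ then yields
\[
w^{-\gamma} - u^{-\gamma} \leq \gamma\,C^{-\gamma-1}\,v \quad \text{a.e.\ on }\mathcal{O}.
\]
Restricting to $\varphi \in C_0^\infty(\mathcal{O})$ with $\varphi \geq 0$ (so that $\mathcal{B}_\rho(v,\varphi) = \mathcal{B}_{\rho,\mathcal{O}}(v,\varphi)$) and extending by density, I would conclude
\[
\mathcal{B}_{\rho,\mathcal{O}}(v,\varphi) + a_\mathcal{O} \int_\mathcal{O} v\varphi\,dx \geq 0, \qquad a_\mathcal{O} := \lambda\gamma\,C(\mathcal{O},w)^{-\gamma-1} \geq 0,
\]
so $v$ is a weak supersolution of $\mathcal{L}v + a_\mathcal{O}\,v = 0$ on $\mathcal{O}$ in the sense of Definition \ref{def:weaksubsolZeroOrder}.

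The second step is a dichotomy argument. Applying Proposition \ref{prop:weakHarnack} on a ball $B_r(x_0) \subset B_R(x_0) \Subset \Omega$, and using that $v \geq 0$ on all of $\mathbb{R}^n$ (so $v_- \equiv 0$ and the tail vanishes), I get
\[
\Big(\fint_{B_r(x_0)} v^\eta\,dx\Big)^{1/\eta} \leq c\,\essinf_{B_r(x_0)} v,
\]
so on each such ball either $v \equiv 0$ a.e.\ or $\essinf v > 0$. If $v \not\equiv 0$ but $|\{v=0\}|>0$, a Lebesgue density point argument produces a ball $B \Subset \Omega$ on which $v \equiv 0$ a.e. To propagate this vanishing I would then exploit the nonlocal form: pick $B' \Subset B$ and $0 \leq \varphi \in C_0^\infty(B')$, $\varphi \not\equiv 0$, and plug $\varphi$ into the supersolution inequality on $B$. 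The local gradient term and the zero-order term vanish because $v \equiv 0$ on $B$, so only the nonlocal form remains, and a direct splitting of $\mathbb{R}^{2n}$ into the four corners $B\times B$, $B\times B^c$, $B^c\times B$, $B^c\times B^c$ gives
\[
0 \leq \langle v,\varphi\rangle_{s,\mathbb{R}^n} = -2\int_{B'}\varphi(x)\int_{\mathbb{R}^n\setminus B}\frac{v(y)}{|x-y|^{n+2s}}\,dy\,dx.
\]
Since the inner kernel integral is strictly positive and finite for every $y \notin B$ (as $B' \Subset B$), and $v(y) \geq 0$, this forces $v \equiv 0$ a.e.\ on $\mathbb{R}^n$, contradicting $v \not\equiv 0$.

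The delicate point I anticipate is the linearization of the singular term: the integrability of $w^{-\gamma}$ against a general test function is only $L^1_{\mathrm{loc}}$, so some care is needed when subtracting the inequalities and when enlarging the test-function class by density. The uniform lower bound $w \geq C(\mathcal{O},w)$ on compact subsets supplied by hypothesis (b) is precisely what rescues this step by making the coefficient $a_\mathcal{O}$ bounded and nonnegative; once this is in place, the Harnack dichotomy and the nonlocal propagation are fairly mechanical consequences of Proposition \ref{prop:weakHarnack} and of the nonlocal structure of $\mathcal{L}$.
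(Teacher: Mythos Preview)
Your proof is correct, and the linearization step (subtracting the weak formulations, dropping the nonnegative critical difference, and using the mean value theorem with assumption (b) to absorb the singular term into a bounded nonnegative zero-order coefficient) is exactly what the paper does.

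Where you diverge is in how you conclude the dichotomy. The paper, having reduced to $\mathcal{L}v + a_{\mathcal{O}}\,v \geq 0$ on $\mathcal{O}\Subset\Omega$, simply cites the Strong Maximum Principle in \cite[Theorem~3.1]{BMV} to obtain ``either $v\equiv 0$ or $v>0$ a.e.\ in $\mathcal{O}$'', and then lets $\mathcal{O}$ exhaust $\Omega$. You instead re-derive this strong minimum principle from the weak Harnack inequality of Proposition~\ref{prop:weakHarnack}: the Harnack estimate with vanishing tail forces, on every small ball, either $v\equiv 0$ or $\mathrm{ess\,inf}\,v>0$; if the zero set has positive measure you locate a ball on which $v\equiv 0$ (note that the Harnack dichotomy itself, not merely Lebesgue density, is what upgrades ``positive-measure zero set in a ball'' to ``$v\equiv 0$ on that ball''); and then you propagate the vanishing globally via the nonlocal bilinear form, which is a clean and standard argument in the fractional setting. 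Your route is longer but more self-contained: it shows that the machinery developed in Section~\ref{sec:weakharnack} already suffices for Lemma~\ref{lem:SMPsoprasotto}, so that the external reference to \cite{BMV} is not strictly needed here.
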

 \begin{proof}
  We limit ourselves to consider only the case when $w$ is a \emph{weak subsolution} of 
  pro\-blem \eqref{eq:Problem}$_\lambda$, since the case when $w$ is a
  weak supersolution is analogous.
  
  To begin with, we arbitrarily fix a bounded open set $\Oo\Subset\Omega$ and we observe that, since $w$ 
  is a weak subsolution of problem \eqref{eq:Problem}$_\lambda$ and since $u$
  is a weak solution of the same problem, we have the following computations:
   \begin{align*}
  \LL (u-w) & \geq \lambda(u^{-\gamma}-w^{-\gamma})
  +(u^{2^*-1}-w^{2^*-1}) \\
  & (\text{since $w\leq u$, see assumption a)}) \\
  & \geq \lambda({u}^{-\gamma}-w^{-\gamma}) \\
  & (\text{by the Mean Value Theorem, for some $\theta\in(0,1)$}) \\
  & = -\gamma\lambda(\theta {u}+(1-\theta)w)^{-\gamma-1}(u-w) \\
  & \geq -\gamma\lambda w^{-\gamma-1}(u-w) \\
  & (\text{by assumption b)}) \\
  & \geq -\gamma\lambda C^{-\gamma-1}(u-w),
 \end{align*}
 in the weak sense on $\Oo$
 (here, $C > 0$ is a constant depending on $\Oo$ and on $w$). 
 
 As a consequence of this fact,
 and since $w\leq u$ a.e.\,in $\Omega$ (hence, in $\R^n$), we are then entitled
 to apply the Strong Maximum Principle in \cite[Theorem 3.1]{BMV} to the
 function $v = u-w\in\mathcal{X}_+^{1,2}(\Omega)$ (with $f \equiv \gamma\lambda C^{-\gamma-1}t$),
 obtaining that
 $$\text{either $v\equiv 0$ or $v > 0$ a.e.\,in $\Oo$}.$$
 Due to the arbitrariness of $\Oo\Subset\Omega$, this completes the proof.
 \end{proof}
 \begin{remark} \label{rem:assumptionbnonserve}
  We explicitly observe that, if $w\in\mathcal{X}^{1,2}(\Omega)$ is a weak \emph{supersolution}
  of problem \eqref{eq:Problem}$_\lambda$, it follows from Remark \ref{rem:defweaksolPb}-3) that
  assumption b) in Lemma \ref{lem:SMPsoprasotto} is \emph{always satisfied}. 
  Hence, if $u\in\mathcal{X}^{1,2}(\Omega)$ is a weak \emph{solution} of \eqref{eq:Problem}$_\lambda$, we get
  $$(\text{$u\leq w$ a.e.\,in $\Omega$})\,\,\Longrightarrow\,\,(
  \text{either $u\equiv w$ or $u < w$ a.e.\,in $\Omega$}).$$
 \end{remark}
 We now turn to state the following Perron-type lemma, with extends \cite[Lemma 2.1]{Garain} to the 
 case of \emph{critical nonlinearities} 
 and \cite[Lemma 2.2]{Haitao} to the case of mixed local-nonlocal operators.
\begin{lemma}\label{lem:2.2Haitao}
Let $\underline{u}, \overline{u}\in \mathcal{X}^{1,2}(\Omega)$ be a weak subsolution and a weak supersolution, respectively, of problem \eqref{eq:Problem}$_\lambda$. 
We assume that 
\begin{itemize}
\item[a)] $\underline{u}(x) \leq \overline{u}(x)$ for a.e.\,$x\in \Omega$;
\item[b)] for every open set $\Oo\Subset\Omega$ there exists
$C = C(\Oo,\underline{u}) > 0$ such that
$$\text{$\underline{u}\geq C$ a.e.\,in $\Oo$}.$$
\end{itemize}
Then, there exists a weak solution $u \in \mathcal{X}^{1,2}(\Omega)$ of \eqref{eq:Problem} such that 
$$\text{$\underline{u}(x) \leq u(x) \leq \overline{u}(x)$ for a.e. $x \in \Omega$}.$$
\end{lemma}
\begin{proof}
The proof of this lemma is totally analogous to that of
of \cite[Lemma 2.2]{Haitao}, where the purely local counterpart of 
problem \eqref{eq:Problem}$_\lambda$ is considered; in order to treat the (only)
extra term arising from the nonlocal part of $\LL$ one may use
the estimates in \cite{GiacMukSre} (where the pure nonlocal counterpart of 
problem \eqref{eq:Problem}$_\lambda$ is considered).
\end{proof}
We are now ready to prove the existence of a weak solution of \eqref{eq:Problem}$_\lambda$. 
It is an adaptation to our setting of \cite[Lemma 2.3]{Haitao}.
\begin{lemma}\label{lem:2.3Haitao}
Problem \eqref{eq:Problem}$_\lambda$ admits \emph{(}at least\emph{)}
one weak solution $u_{\lambda}\in \mathcal{X}^{1,2}(\Omega)$ for every fixed
$\lambda \in (0,\Lambda_\e]$.
\end{lemma}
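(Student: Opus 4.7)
I will split the argument into the generic case $\lambda<\Lambda$ and the extremal case $\lambda=\Lambda$, following the classical scheme of \cite[Lemma 2.3]{Haitao} and exploiting the Perron-type Lemma~\ref{lem:2.2Haitao}.

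For $\lambda\in(0,\Lambda)$ the plan is to produce a weak subsolution $\underline{u}$ and a weak supersolution $\overline{u}$ of $(P)_\lambda$ with $\underline{u}\leq\overline{u}$ a.e.\,in $\Omega$, and then to invoke Lemma~\ref{lem:2.2Haitao}. The natural subsolution is $\underline{u}:=w_\lambda$, the unique weak solution of the unperturbed problem \eqref{eq:NoPerturbation} supplied by Proposition~\ref{prop:esistenzaGarain}: indeed $\LL w_\lambda=\lambda w_\lambda^{-\gamma}\leq \lambda w_\lambda^{-\gamma}+w_\lambda^{2^*-1}$ in the weak sense, and the quantitative lower bound \eqref{eq:staccatadazeroWlambda} guarantees hypothesis (b) of Lemma~\ref{lem:2.2Haitao}. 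For the supersolution, by the very definition of $\Lambda$ as a supremum there exists $\mu\in(\lambda,\Lambda]$ for which $(P)_\mu$ admits a weak solution $u_\mu\in\mathcal{X}^{1,2}(\Omega)$; since $\lambda<\mu$, one has
\[
\LL u_\mu \;=\; \mu\,u_\mu^{-\gamma}+u_\mu^{2^*-1} \;\geq\; \lambda\,u_\mu^{-\gamma}+u_\mu^{2^*-1},
\]
so that $\overline{u}:=u_\mu$ is a weak supersolution of $(P)_\lambda$.

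It remains to verify the ordering $w_\lambda\leq u_\mu$. I would test the weak formulations of the equations solved by $w_\lambda$ and $u_\mu$ (extended to all of $\mathcal{X}^{1,2}(\Omega)$ thanks to Remark~\ref{rem:defweaksolPb}\,-2) against the nonnegative function $(w_\lambda-u_\mu)_+\in\mathcal{X}_+^{1,2}(\Omega)$, available by Remark~\ref{rem:spaceX12}\,-1). On $\{w_\lambda>u_\mu\}$, the strict monotonicity of $t\mapsto t^{-\gamma}$ together with $\lambda<\mu$ yields
\[
\lambda\,w_\lambda^{-\gamma}\;\leq\;\lambda\,u_\mu^{-\gamma}\;\leq\;\mu\,u_\mu^{-\gamma}+u_\mu^{2^*-1},
\]
so that the right-hand side of the difference identity is nonpositive, whereas the left-hand side $\mathcal{B}_\rho(w_\lambda-u_\mu,\,(w_\lambda-u_\mu)_+)$ bounds from below the quantity $\||\nabla(w_\lambda-u_\mu)_+|\|_{L^2(\Omega)}^2+[(w_\lambda-u_\mu)_+]_{s,\R^n}^2\geq 0$, via the pointwise estimates $\nabla v\cdot\nabla v_+=|\nabla v_+|^2$ and $(v(x)-v(y))(v_+(x)-v_+(y))\geq (v_+(x)-v_+(y))^2$. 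Hence $(w_\lambda-u_\mu)_+\equiv 0$ a.e., i.e., $w_\lambda\leq u_\mu$, and Lemma~\ref{lem:2.2Haitao} produces a weak solution $u_\lambda$ of $(P)_\lambda$ sandwiched between $w_\lambda$ and $u_\mu$.

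The extremal case $\lambda=\Lambda$ is the main obstacle. I would pick $\lambda_n\uparrow\Lambda$, apply the previous step to get weak solutions $u_n$ of $(P)_{\lambda_n}$ with $u_n\geq w_{\lambda_n}$, and aim to extract a subsequence converging to a weak solution of $(P)_\Lambda$. The hard part is securing a uniform bound $\rho(u_n)\leq C$: testing against $u_n$ only gives the identity $\rho(u_n)^2=\lambda_n\int_\Omega u_n^{1-\gamma}\,dx+\int_\Omega u_n^{2^*}\,dx$, which combined with Sobolev/H\"older leads to the usual dichotomy of critical problems. One should close this dichotomy by selecting the $u_n$ as \emph{minimal} solutions above $w_{\lambda_n}$ (so that the sequence is monotone and the corresponding energies $I_{\lambda_n}(u_n)$ are monotonically controlled by the Perron construction of Lemma~\ref{lem:2.2Haitao}), and then pass to the weak limit $u^\star\in \mathcal{X}^{1,2}(\Omega)$: weak convergence handles the linear term $\mathcal{B}_\rho$; dominated convergence, with integrable dominant $w_{\lambda_1}^{-\gamma}|\varphi|$ on $\mathrm{supp}(\varphi)$ granted by \eqref{eq:staccatadazeroWlambda} and monotonicity in $\lambda$ of $w_\lambda$, disposes of the singular term; and the Brezis--Lieb lemma identifies the critical limit, yielding that $u^\star$ is a weak solution of $(P)_\Lambda$ with $u^\star\geq w_\Lambda>0$.
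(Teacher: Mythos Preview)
Your strategy is the same as the paper's: for $\lambda<\Lambda$, take $\underline u=w_\lambda$ (Proposition~\ref{prop:esistenzaGarain}) and $\overline u=u_\mu$ for some $\mu\in(\lambda,\Lambda)$ with $(P)_\mu$ solvable, check $w_\lambda\le u_\mu$, and apply Lemma~\ref{lem:2.2Haitao}; then approximate $\lambda=\Lambda$ from below. Two remarks.

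\emph{The ordering $w_\lambda\le u_\mu$.} Your direct test with $(w_\lambda-u_\mu)_+$ is legitimate here because both $w_\lambda$ and $u_\mu$ are weak \emph{solutions} (not merely sub/supersolutions), so by Remark~\ref{rem:defweaksolPb}-2) their weak formulations extend to all test functions in $\mathcal X^{1,2}(\Omega)$, and the pointwise inequalities you quote give $\mathcal B_\rho(v,v_+)\ge\rho(v_+)^2$. The paper instead tests with the regularized $\theta_\varepsilon(w_\lambda-u_\mu)$ and lets $\varepsilon\to 0^+$; your route is shorter and equally valid under the preliminaries already established.

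\emph{The extremal case $\lambda=\Lambda$.} Your sketch has the right ingredients but is vague precisely at the point you flag as hard. The mechanism is not monotonicity of the sequence $\{u_n\}$ per se, but the \emph{energy upper bound} coming from the Perron construction: since $u_n$ minimizes $I_{\lambda_n}$ over $M$ and $w_{\lambda_n}\in M$, one has
\[
I_{\lambda_n}(u_n)\le I_{\lambda_n}(w_{\lambda_n})\le J_{\lambda_n}(w_{\lambda_n})<0.
\]
Combining this with the solution identity $\rho(u_n)^2=\lambda_n\int_\Omega u_n^{1-\gamma}+\int_\Omega u_n^{2^*}$ gives
\[
\Big(\tfrac12-\tfrac1{2^*}\Big)\rho(u_n)^2 \;<\; \lambda_n\Big(\tfrac1{1-\gamma}-\tfrac1{2^*}\Big)\int_\Omega u_n^{1-\gamma}\;\le\; C\,\rho(u_n)^{1-\gamma},
\]
hence $\rho(u_n)\le C'$ uniformly. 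This is the computation you should spell out; once it is in place, your limiting argument (weak convergence for $\mathcal B_\rho$, dominated convergence for the singular term via $u_n\ge w_{\lambda_1}$, and weak $L^{(2^*)'}$-convergence of $u_n^{2^*-1}$ for the critical term) is correct. The paper itself does not write this out and simply refers to \cite[Lemma~2.3]{Haitao}.
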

\begin{proof}
We first assume that $0<\lambda<\Lambda_\e$. In this case,
the idea of the proof is rather standard: we want to construct both a weak subsolution and a weak supersolution and then apply Lemma \ref{lem:2.2Haitao}. 

Let us start with the weak subsolution. 
By Proposition \ref{prop:esistenzaGarain}, we know that for every $\lambda \in (0,\Lambda_{\e})$ (and actually for all $\lambda >0$) there exists a unique solution $w_{\lambda,\e}$ of the {\it purely singular} problem \eqref{eq:NoPerturbation}, which is the Euler-Lagrange equation naturally associated with the functional
\begin{equation*}
J_{\lambda,\e}(u):= \dfrac{1}{2}\, \rho_{\e}(u)^2 - \dfrac{\lambda}{1-\gamma} \int_{\Omega}|u|^{1-\gamma}\, dx, \quad u \in \mathcal{X}^{1,2}(\Omega).
\end{equation*}

Then, it is readily seen that $w_{\lambda}$ is a weak subsolution of \eqref{eq:Problem}$_\lambda$. 

Let us now look for a weak supersolution. By the very definition of $\Lambda_\e$, we know that there necessarily exists $\lambda' \in (\lambda, \Lambda_\e)$ such that \eqref{eq:Problem}$_{\lambda'}$ admits a weak solution $u_{\lambda'}$, and this can be easily taken as a weak supersolution of \eqref{eq:Problem}$_{\lambda}$.\\
We now claim that
\begin{equation}\label{eq:Claim}
w_{\lambda}(x) \leq u_{\lambda'}(x), \quad \textrm{for a.e. } x\in \Omega.
\end{equation}
To this aim, let us consider a smooth non-decreasing function $\theta: \mathbb{R}\to \mathbb{R}$ such that
$$\theta(t) =1 \, \textrm{ for } t \geq 1 \quad \textrm{ and } \quad \theta(t) = 0 \, \textrm{ for } t \leq 0,$$
\noindent and it is linked in a smooth way for $t \in (0,1)$. We further define the function
$$\theta_{\sigma}(t):= \theta \left(\dfrac{t}{\sigma}\right) \quad 
(\text{$\sigma>0$ and $t\in \mathbb{R}$}).$$
Due to its definition, we are entitled to use the function $\theta_\sigma(w_{\lambda}- u_{\lambda'})$ as a test function in both \eqref{eq:Problem}$_{\lambda'}$ (solved by $u_{\lambda'}$) and \eqref{eq:NoPerturbation} (solved by $w_{\lambda}$). Thus, we have
\begin{equation}\label{eq:solvedBywlambda}
\begin{aligned}
&\int_{\Omega}\nabla w_{\lambda}\cdot \nabla(w_{\lambda}-u_{\lambda'})\theta'_{\sigma}
(w_{\lambda}-u_{\lambda'})\, dx \\
&+ \iint_{\mathbb{R}^{2n}}\dfrac{(w_{\lambda}(x)-w_{\lambda}(y))(\theta_{\sigma}(w_{\lambda}(x)-u_{\lambda'}(x)) - \theta_{\sigma}(w_{\lambda}(y)-u_{\lambda'}(y)))}{|x-y|^{n+2s}}\, dxdy\\
&\qquad - \lambda \int_{\Omega}\dfrac{\theta_{\sigma}(w_{\lambda}-u_{\lambda'})}{w_{\lambda}^{\gamma}}\, dx = 0,
\end{aligned}
\end{equation}
\noindent and
\begin{equation}\label{eq:solvedByuOverlinelambda}
\begin{aligned}
&\int_{\Omega}\nabla u_{\lambda'}\cdot \nabla(w_{\lambda}-u_{\lambda'})\theta'_{\sigma}(w_{\lambda}-u_{\lambda'})\, dx \\
&+ \iint_{\mathbb{R}^{2n}}\dfrac{(u_{\lambda'}(x)-u_{\lambda'}(y))(\theta_{\sigma}(w_{\lambda}(x)-u_{\lambda'}(x)) - \theta_{\sigma}(w_{\lambda}(y)-u_{\lambda'}(y)))}{|x-y|^{n+2s}}\, dxdy\\
& - \lambda' \int_{\Omega}\dfrac{\theta_{\sigma}(w_{\lambda}-u_{\lambda'})}{u_{\lambda'}^{\gamma}}\, dx  - \int_{\Omega}u_{\lambda'}^{2^{*}-1}\theta_{\sigma}(w_{\lambda}-u_{\lambda'})\, dx= 0.
\end{aligned}
\end{equation}
Subtracting \eqref{eq:solvedBywlambda} from  \eqref{eq:solvedByuOverlinelambda} we get
\begin{equation}\label{eq:Subtraction}
\begin{aligned}
0 &\geq  - \int_{\Omega}|\nabla(u_{\lambda'}-w_{\lambda})|^2 \theta'_{\sigma}(w_{\lambda}-u_{\lambda'})\, dx\\
&\qquad- \iint_{\mathbb{R}^{2n}}\frac{1}{|x-y|^{n+2s}}\Big\{
 \big(u_{\lambda'}(x)-u_{\lambda'}(y)-w_{\lambda}(x)+w_{\lambda}(y)\big)\times \\
& \qquad\qquad\qquad \times \big(\theta_{\sigma}(w_{\lambda}(x)-u_{\lambda'}(x))-\theta_{\sigma}(w_{\lambda}(y)-u_{\lambda'}(y))\big)\Big\}\, dxdy\\
&= \int_{\Omega}\left( \dfrac{\lambda'}{u_{\lambda'}^{\gamma}} - \dfrac{\lambda}{w_{\lambda}^{\gamma}} + u_{\lambda'}^{2^{*}-1}\right) \theta_{\sigma}(w_{\lambda}-u_{\lambda'})\, dx \\
&\geq \lambda \int_{\Omega}\left( \dfrac{1}{u_{\lambda'}^{\gamma}}- \dfrac{1}{w_{\lambda}^{\gamma}}\right)\theta_{\sigma}(w_{\lambda}-u_{\lambda'})\, dx.
\end{aligned}
\end{equation}
Let us justify the first inequality in \eqref{eq:Subtraction}. The local part is clearly non-positive. Regarding the nonlocal-part, 
if $(x,y)$ are such that
$$w_{\lambda}(x)- u_{\lambda'}(x) \geq w_{\lambda}(y)- u_{\lambda'}(y),$$
\noindent then
\begin{equation*}
\theta_{\sigma}(w_{\lambda}(x)-u_{\lambda'}(x))-\theta_{\sigma}(w_{\lambda}(y)-u_{\lambda'}(y)) \geq 0,
\end{equation*}
\noindent while, if $(x,y)$ are such that
$$w_{\lambda}(x)- u_{\lambda'}(x) \leq w_{\lambda}(y)- u_{\lambda'}(y),$$
\noindent then
\begin{equation*}
\theta_{\sigma}(w_{\lambda}(x)-u_{\lambda'}(x))-\theta_{\sigma}(w_{\lambda}(y)-u_{\lambda'}(y)) \leq 0.
\end{equation*}
Coming back to \eqref{eq:Subtraction}, letting $\sigma \to 0^+$ we find that
$$\int_{\{w_{\lambda}> u_{\lambda'}\}}\left( \dfrac{1}{u_{\lambda'}^{\gamma}}- \dfrac{1}{w_{\lambda}^{\gamma}}\right)\, dx \leq 0,$$
\noindent and this implies that
$$\left|\left\{ x \in \Omega: w_{\lambda}(x) > u_{\lambda'}(x)\right\}\right| = 0,$$
\noindent as claimed in \eqref{eq:Claim}. \vspace*{0.1cm}

With \eqref{eq:Claim} at hand, we are ready to complete the proof
of the lemma: in fact, setting  $\overline{u}= u_{\lambda'}$ and $\underline{u} = w_{\lambda}$, 
by \eqref{eq:Claim} and Proposition \ref{prop:esistenzaGarain} we know that
\vspace*{0.1cm}

i)\,\,$\underline{u}$ is weak subsolution and $\overline{u}$ is a weak supersolution
of problem \eqref{eq:Problem}$_\lambda$;
\vspace*{0.05cm}

ii)\,\,$\underline{u}$ and $\overline{u}$ satisfy assumptions a)-b) in Lemma \ref{lem:2.2Haitao}.
\vspace*{0.1cm}

\noindent We can then apply Lemma \ref{lem:2.2Haitao}, which therefore proving that problem \eqref{eq:Problem}$_{\lambda}$ admits a weak solution $u_{\lambda}$ for every $\lambda \in (0,\Lambda_\e)$. Moreover,
by Proposition \ref{prop:esistenzaGarain} we have
$$I_{\lambda,\e}(u_{\lambda}) \leq I_{\lambda,\e}(w_{\lambda}) \leq J_{\lambda,\e}(w_{\lambda,\e}) <0.$$
For $\lambda = \Lambda$ it is now sufficient to repeat the last part of the proof of \cite[Lemma 2.3]{Haitao} with the obvious modifications.
\end{proof}
\medskip

\begin{lemma} \label{lem:25Haitao}
  Let $\underline{u},\overline{u},u_\lambda
   \in\mathcal{X}^{1,2}(\Omega)$ be, respectively, the weak subsolution, 
   the we\-ak supersolution and the weak solution of problem \eqref{eq:Problem}$_\lambda$
   obtained in Lemma \ref{lem:2.3Haitao}, and assume that $0<\lambda<\Lambda_\e$.
   Then, $u_\lambda$ is a \emph{local minimizer} of $I_{\lambda,\e}$
   in \eqref{eq:functionalIlambdae}.
\end{lemma}
\begin{proof}
 By contradiction, suppose
 that $u_\lambda$ \emph{is not} a local minimizer for $I_{\lambda,\e}$. Then,
 we can construct a sequence $\{u_j\}_j\subseteq\mathcal{X}^{1,2}(\Omega)$
 satisfying the following properties:
 \vspace*{0.1cm}
 
 i)\,\,$u_j\to u_\lambda$ in $\mathcal{X}^{1,2}(\Omega)$ as $j\to+\infty$;
 \vspace*{0.05cm}
 
 ii)\,\,$I_{\lambda,\e}(u_j) < I_{\lambda,\e}(u_\lambda)$ for every $j\in\mathbb{N}$.
 \vspace*{0.1cm}
 
 \noindent We explicitly observe that, by possibly replacing $u_j$ with $z_j = |u_j|$, we may assume that
 $u_j\geq 0$ a.e.\,in $\Omega$ for every $j\geq 1$. 
 In fact, since $u_j\to u_\lambda$ in $\mathcal{X}^{1,2}(\Omega)$
 and since $u_\lambda > 0$ almost everywhere in $\Omega$, it is easy to recognize that
 $$\text{$|u_j|\to |u_\lambda| = u_\lambda$ in $\mathcal{X}^{1,2}(\Omega)$ as $j\to+\infty$},$$
 and this shows that property i) is still satisfied by $\{z_j\}_j$. Moreover, we have
 $$I_{\lambda,\e}(|u_j|) \leq I_{\lambda,\e}(u_j) < I_{\lambda,\e}(u_\lambda)\quad\text{for every $j\geq 1$},$$
 and this shows that also property ii) is still satisfied by the sequence $\{z_j\}_j$.
 Hence, from now on we tacitly understand that $\{u_j\}_j$ is a sequence of \emph{non-negative functions}
 satisfying properties i)-ii) above.
  Accordingly, we set 
  $$v_j := 
 \max\{\underline{u},\min\{\overline{u},u_j\}\}\in\mathcal{X}^{1,2}(\Omega)$$ 
 and we define
 \begin{itemize}
  \item[$(\ast)$] $\overline{w}_j = (u_j-\overline{u})_+\in\mathcal{X}^{1,2}_+(\Omega)$ 
  and $\overline{S}_j = \mathrm{supp}(\overline{w}_j)
  = \{u_j\geq \overline{u}\}$;
    \vspace*{0.1cm}
    
   \item[$(\ast)$] $\underline{w}_j = (u_j-\underline{u})_-\in\mathcal{X}^{1,2}_+(\Omega)$ 
  and $\underline{S}_j = \mathrm{supp}(\underline{w}_j)
  = \{u_j\leq \underline{u}\}$.
 \end{itemize}
 We explicitly observe that, by definition, the following identities hold:
 \begin{equation} \label{eq:indetitiesujvjwj}
  \begin{split}
  \mathrm{a)}&\,\,v_j\in M = \{u\in\mathcal{X}^{1,2}(\Omega):\,\underline{u}\leq u\leq\overline{u}\}; \\
  \mathrm{b)}&\,\,\text{$v_j \equiv \overline{u}$ on $\overline{S}_j$,
  $v_j \equiv \underline{u}$ on $\underline{S}_j$ and $v_j \equiv u_j$ on 
  $\{\underline{u}<u_j<\overline{u}\}$}; \\
  \mathrm{c)}&\,\,\text{$u_j = \overline{u}+\overline{w}_j$ on $\overline{S}_j$ and
  $u_j = \underline{u}-\underline{w}_j$ on $\underline{S}_j$}.
  \end{split}
 \end{equation}
 Following \cite{Haitao}, we now claim that
 \begin{equation} \label{eq:claimmeasureSn}
  \lim_{n\to+\infty}|\overline{S}_j| = \lim_{n\to+\infty}|\underline{S}_j| = 0.
 \end{equation}
 Indeed, let $\sigma > 0$ be arbitrarily fixed and let $\delta > 0$ be such that
 $|\Omega\setminus\Omega_\delta| < \frac{\sigma}{2}$, where we have set
 $\Omega_\delta = \{x\in\Omega:\,d(x,\de\Omega) > \delta\}\Subset\Omega$.
 Since, by construction, 
 $$\underline{u} = w_\lambda\in\mathcal{X}^{1,2}(\Omega)$$ is the unique solution
 of problem \eqref{eq:NoPerturbation}, 
 we know from Proposition \ref{prop:esistenzaGarain} that 
 \begin{equation} \label{eq:underustaccata}
  \text{$u_\lambda\geq \underline{u}\geq C > 0$ a.e.\,in $\Omega_\delta$},
  \end{equation}
 where $C = C(\delta,\underline{u}) > 0$ is a suitable constant 
 (recall that $\underline{u}\leq u_\lambda\leq \overline{u}$).
 \vspace*{0.05cm}
 
 On the other hand, since $u_\lambda$ is a weak solution of problem \eqref{eq:Problem}$_\lambda$,
 and since
 $\overline{u} = u_{\lambda'}$ for some $\lambda<\lambda'<\Lambda$
 (see the proof of Lemma \ref{lem:2.3Haitao}),
 by \eqref{eq:underustaccata} we have 
 \begin{align*}
  \LL (\overline{u}-u_\lambda) & = \lambda'\overline{u}^{-\gamma}-\lambda u_\lambda^{-\gamma}
  +(\overline{u}^{2^*-1}-u_\lambda^{2^*-1}) \\
  & (\text{since, by construction, $\underline{u}\leq u_\lambda\leq \overline{u}$
  and $\lambda < \lambda'$}) \\
  & \geq \lambda(\overline{u}^{-\gamma}-u_\lambda^{-\gamma}) \\
  & (\text{by the Mean Value Theorem, for some $\theta\in(0,1)$}) \\
  & = -\gamma\lambda(\theta\overline{u}+(1-\theta)u_\lambda)^{-\gamma-1}(\overline{u}-u_\lambda) \\
  & \geq -\gamma\lambda \underline{u}^{-\gamma-1}(\overline{u}-u_\lambda) \\
  & (\text{here we use \eqref{eq:underustaccata}}) \\
  & \geq -\gamma\lambda C^{-\gamma-1}(\overline{u}-u_\lambda),
 \end{align*}
 in the weak sense on $\Omega_\delta$; as a consequence, we see that $v := \overline{u}-u_\lambda
 \in\mathcal{X}^{1,2}(\Omega)$ is a \emph{weak supersolution} (in the sense of
 Definition \ref{def:weaksubsolZeroOrder}) of equation \eqref{eq:withZeroOrder}, with
 $$a(x) = \gamma\lambda C(\delta,\overline{u})^{-\gamma-1} > 0.$$
 Since $v > 0$ a.e.\,on every ball $B\Subset\Omega$ (as $\overline{u} = u_{\lambda'}$ and 
  $\lambda\neq\lambda'$), we can apply again Co\-rol\-la\-ry 
  \ref{cor:BrezisNirenbergpernoi},
  ensuring the existence of $C_1 = C_1(\delta,u_\lambda,\overline{u}) > 0$ such that
  \begin{equation} \label{eq:vstaccatafinal}
   \text{$v = \overline{u}-u_\lambda \geq C_1 > 0$ a.e.\,in $\Omega_\delta$}.
  \end{equation}
  With \eqref{eq:vstaccatafinal} at hand, we can finally complete the proof
  of \eqref{eq:claimmeasureSn}: in fact, recalling that
   $u_j\to u_\lambda$ in $\mathcal{X}^{1,2}(\Omega)\hookrightarrow L^2(\Omega)$
  as $j\to+\infty$, from \eqref{eq:vstaccatafinal} we obtain
  \begin{align*}
   |\overline{S}_j| & \leq |\Omega\setminus\Omega_\delta|+|\Omega_\delta\cap\overline{S}_j|
   < \frac{\sigma}{2}+\frac{1}{C_1^2}\int_{\Omega_\delta\cap\overline{S}_j}(\overline{u}-u_\lambda)^2\,dx\\
   & (\text{since $0\leq \overline{u}-u_\lambda\leq u_j-u_\lambda$ a.e.\,in $\overline{S}_j$}) \\
   & < \frac{\sigma}{2}+\frac{1}{C_1^2}\|u_j-u_\lambda\|^2_{L^2(\Omega)} < \sigma,
  \end{align*}
  provided that $j$ is large enough, and this proves that $|\overline{S}_j|\to 0$ as $j\to+\infty$.
  In a very similar fashion, one can prove that $|\underline{S}_j|\to 0$ as $j\to+\infty$.
  \vspace*{0.1cm}
  
  Now we have established \eqref{eq:claimmeasureSn}, we can proceed toward the end of the demonstration
  of the lemma.
  To begin with, using identities b)-c) in \eqref{eq:indetitiesujvjwj} we write
  \begin{align*}
   I_{\lambda,\e}(u_j) & = I_{\lambda,\e}(v_j)
   + \frac{1}{2}\big(\rho_\e(u_j)^2-\rho_\e(v_j)^2\big) \\
   &\qquad-\frac{\lambda}{1-\gamma}\int_\Omega(|u_j|^{1-\gamma}-|v_j|^{1-\gamma})\,dx 
   -\frac{1}{2^*}\int_\Omega(|u_j|^{2^*}-|v_j|^{2^*})\,dx \\
   & = I_{\lambda,\e}(v_j)
   + \frac{1}{2}\int_{\overline{S}_j\cup\underline{S}_j}(|\nabla u_j|^2-|\nabla v_j|^2)\,dx
   +  \frac{\e}{2}\big([u_j]_s^2-[v_j]^2_s\big) \\
   &\qquad-\frac{\lambda}{1-\gamma}\int_{\overline{S}_j\cup\underline{S}_j}
   (|u_j|^{1-\gamma}-|v_j|^{1-\gamma})\,dx 
   -\frac{1}{2^*}\int_{\overline{S}_j\cup\underline{S}_j}(|u_j|^{2^*}-|v_j|^{2^*})\,dx \\
   & = I_{\lambda,\e}(v_j) + \frac{\e}{2}\big([u_j]_s^2-[v_j]^2_s\big)
   +
   \mathcal{R}^{(1)}_j+\mathcal{R}^{(2)}_j = (\bigstar), \phantom{+\int_{\underline{S}_j\cup\overline{S}_j}}
  \end{align*}
  where we have introduced the shorthand notation
  \begin{align*}
   (\ast)&\,\,\mathcal{R}^{(1)}_j = \frac{1}{2}\int_{\overline{S}_j}
   \big(|\nabla (\overline{u}+\overline{w}_j)|^2-|\nabla \overline{u}|^2\big)\,dx \\
   &\qquad - \int_{\overline{S}_j}
  \Big\{\frac{\lambda}{1-\gamma}(|\overline{u}+\overline{w}_j|^{1-\gamma}-|\overline{u}|^{1-\gamma})
  +\frac{1}{2^*}(|\overline{u}+\overline{w}_j|^{2^*}-|\overline{u}|^{2^*})\Big\}dx; \\[0.15cm]
  (\ast)&\,\,\mathcal{R}^{(2)}_j = \frac{1}{2}\int_{\underline{S}_j}
  \big(|\nabla (\underline{u}-\underline{w}_j)|^2-|\nabla \underline{u}|^2\big)\,dx \\
  & \qquad - \int_{\underline{S}_j}
  \Big\{\frac{\lambda}{1-\gamma}(|\underline{u}-\underline{w}_j|^{1-\gamma}-|\underline{u}|^{1-\gamma})
  +\frac{1}{2^*}(|\underline{u}-\underline{w}_j|^{2^*}-|\underline{u}|^{2^*})\Big\}dx.
  \end{align*}
  Then, we exploit the estimate for the \emph{purely nonlocal} term $N_0 = [u_j]_s^2-[v_j]^2_s$ 
  e\-sta\-bli\-shed
  in \cite[Lemma 3.3]{GiacMukSre}: this gives the following computation
  \begin{align*}
   (\bigstar) & \geq I_{\lambda,\e}(v_j)+ \frac{\e}{2}\big([\overline{w}_j]^2_s+
   [\underline{w}_j]^2_s\big)+
   \e\,\langle \overline{u},\overline{w}_j\rangle_{s,\R^{2n}}-
   \e,\langle \underline{u},\underline{w}_j\rangle_{s,\R^{2n}}+
   \mathcal{R}^{(1)}_j+\mathcal{R}^{(2)}_j \\[0.2cm]
   & =
   I_{\lambda,\e}(v_j)+\frac{1}{2}\Big(\int_{\Omega}|\nabla \overline{w}_j|^2\,dx
   + \e\,[\overline{w}_j]^2_s\Big)
   + \frac{1}{2}
   \Big(\int_{\Omega}|\nabla \underline{w}_j|^2\,dx
   + \e\,[\underline{w}_j]^2_s\Big) \\
   & \qquad 
   + \int_{\Omega}\nabla \overline{u}\cdot\nabla \overline{w}_j\,dx 
   + \e\,\langle \overline{u},\overline{w}_j\rangle_{s,\R^{2n}}
   - \int_{\Omega}\nabla \underline{u}\cdot\nabla \underline{w}_j\,dx 
   - \e\,\langle \underline{u},\underline{w}_j\rangle_{s,\R^{2n}} \\
   & \qquad 
   - \int_{\overline{S}_j}
  \Big\{\frac{\lambda}{1-\gamma}(|\overline{u}+\overline{w}_j|^{1-\gamma}-|\overline{u}|^{1-\gamma})
  +\frac{1}{2^*}(|\overline{u}+\overline{w}_j|^{2^*}-|\overline{u}|^{2^*})\Big\}dx \\
  & \qquad 
  -\int_{\underline{S}_j}
  \Big\{\frac{\lambda}{1-\gamma}(|\underline{u}-\underline{w}_j|^{1-\gamma}-|\underline{u}|^{1-\gamma})
  +\frac{1}{2^*}(|\underline{u}-\underline{w}_j|^{2^*}-|\underline{u}|^{2^*})\Big\}dx \\
   & = I_{\lambda,\e}(v_j)+\frac{1}{2}\rho_\e(\overline{w}_j)^2
   +\frac{1}{2}\rho_\e(\underline{w}_j)^2
   +\mathcal{B}_\e(\overline{u},\overline{w}_j)-
   \mathcal{B}_\e(\underline{u},\underline{w}_j) \\
   & \qquad - \int_{\overline{S}_j}
  \Big\{\frac{\lambda}{1-\gamma}(|\overline{u}+\overline{w}_j|^{1-\gamma}-|\overline{u}|^{1-\gamma})
  +\frac{1}{2^*}(|\overline{u}+\overline{w}_j|^{2^*}-|\overline{u}|^{2^*})\Big\}dx \\
  & \qquad 
  -\int_{\underline{S}_j}
  \Big\{\frac{\lambda}{1-\gamma}(|\underline{u}-\underline{w}_j|^{1-\gamma}-|\underline{u}|^{1-\gamma})
  +\frac{1}{2^*}(|\underline{u}-\underline{w}_j|^{2^*}-|\underline{u}|^{2^*})\Big\}dx.
  \end{align*}
  Summing up, we obtain
  $$I_{\lambda,\e}(u_j) = I_{\lambda,\e}(v_j)+A_j+B_j,$$
  where we have used the notation
 \begin{align*}
  (\ast)&\,\,A_j = \frac{1}{2}\rho_\e(\overline{w}_j)^2+
  \mathcal{B}_\e(\overline{u},\overline{w}_j) 
  \\
  & \qquad - \int_{\overline{S}_j}
  \Big\{\frac{\lambda}{1-\gamma}(|\overline{u}+\overline{w}_j|^{1-\gamma}-|\overline{u}|^{1-\gamma})
  +\frac{1}{2^*}(|\overline{u}+\overline{w}_j|^{2^*}-|\overline{u}|^{2^*})\Big\}dx;
  \\[0.15cm]
 (\ast)&\,\,B_j  = \frac{1}{2}\rho_\e(\underline{w}_j)^2-\mathcal{B}_\e(\underline{u},\underline{w}_j)
  \\
  & \qquad - \int_{\underline{S}_j}
  \Big\{\frac{\lambda}{1-\gamma}(|\underline{u}-\underline{w}_j|^{1-\gamma}-|\underline{u}|^{1-\gamma})
  +\frac{1}{2^*}(|\underline{u}-\underline{w}_j|^{2^*}-|\underline{u}|^{2^*})\Big\}dx.
 \end{align*}
 Now, since we have already recognized that
 $v_j\in M$
 and since, by con\-stru\-ction, we know that $I_{\lambda,\e}(u_\lambda) = \inf_M I_{\lambda,\e}$ 
 (see the proof of Lemma \ref{lem:2.3Haitao}), we get
 \begin{equation} \label{eq:dovecontraddire}
   I_{\lambda,\e}(u_j)\geq I_{\lambda,\e}(u_\lambda)+A_j+B_j.
  \end{equation}
 On the other hand, since $\overline{u}=u_{\lambda'}$ is a \emph{weak supersolution}
 of \eqref{eq:Problem}$_\lambda$, we have
 \begin{align*}
  A_j & = \frac{1}{2}\rho_\e(\overline{w}_j)^2
  + \mathcal{B}_{\e}(\overline{u},\overline{w}_j)
  \\
  & \qquad - \int_{\overline{S}_j}
  \Big\{\frac{\lambda}{1-\gamma}(|\overline{u}+\overline{w}_j|^{1-\gamma}-|\overline{u}|^{1-\gamma})
  +\frac{1}{2^*}(|\overline{u}+\overline{w}_j|^{2^*}-|\overline{u}|^{2^*})\Big\}dx \\
  & (\text{by the Mean Value Theorem, for some $\theta\in (0,1)$}) \\
  & \geq \frac{1}{2}\rho_\e(\overline{w}_j)^2
  + \int_{\overline{S}_j}(\lambda \overline{u}^{-\gamma}+\overline{u}^{2^*-1})\overline{w}_j\,dx
  \\
  &\qquad -\int_{\overline{S}_j}\big\{\lambda(\overline{u}+\theta\overline{w}_j)^{-\gamma}
  \overline{w}_j+(\overline{u}+\theta\overline{w}_j)^{2^*-1}\overline{w}_j\}dx \\
  & \geq  \frac{1}{2}\rho_\e(\overline{w}_j)^2-\int_{\overline{S}_j}
  \big((\overline{u}+\theta\overline{w}_j)^{2^*-1}-\overline{u}^{2^*-1}\big)\overline{w}_j\,dx
  \\
  & (\text{again by the Mean Value Theorem}) \\
  & \geq \frac{1}{2}\rho_\e(\overline{w}_j)^2-C\int_{\overline{S}_j}(\overline{u}^{2^*-2}+\overline{w}_j^{2^*-2})
  \overline{w}_j^2\,dx,
 \end{align*}
 where $C > 0$ is a suitable constant only depending on the dimension $n$.
 From this, by exploiting H\"older's and Sobolev's inequalities
 (see \eqref{eq:equivalenceuniforme}), we obtain
 \begin{equation} \label{eq:todeduceAngeqzero}
  \begin{split}
  A_j & = \frac{1}{2}\rho_\e(\overline{w}_j)^2-C\int_{\overline{S}_j}(\overline{u}^{2^*-2}+\overline{w}_j^{2^*-2})
  \overline{w}_j^2\,dx \\
  & \geq 
  \frac{1}{2}\rho_\e(\overline{w}_j)^2\Big\{1-\hat{C}\Big(\int_{\overline{S}_j}\overline{u}^{2^*}\,dx
   \Big)^{\frac{2^*-2}{2^*}}-\hat{C}\rho_\e(\overline{w}_j)^{2^*-2}\Big\},
  \end{split}
 \end{equation}
 where $\hat{C} > 0$ is another constant depending on $n$.
 
 With \eqref{eq:todeduceAngeqzero} at hand, we are finally ready to complete the proof.
 Indeed, taking into account the above \eqref{eq:claimmeasureSn}, we have
 \begin{align*}
  \lim_{n\to+\infty}\Big(\int_{\overline{S}_j}\overline{u}^{2^*}\,dx
   \Big)^{\frac{2^*-2}{2^*}} = 0; 
   \end{align*}
   moreover, since $u_j\to u_\lambda$ in $\mathcal{X}^{1,2}(\Omega)$ as $j\to+\infty$, one also get
   \begin{align*}
   0& \leq \rho_\e(\overline{w}_j)^2
   \leq \Theta\|\overline{w}_j\|_{H^1_0(\Omega)}^2 = \Theta\int_{\overline{S}_j}|\nabla (u_j-
    \overline{u})|^2\,dx\\
    & \leq 2\Theta\|u_j-u_\lambda\|_{H^1_0(\Omega)}^2+2\Theta\int_{\overline{S}_j}
    |\nabla (u_\lambda-\overline{u})|^2\,dx \\
    & \leq 2\Theta\rho_\e(u_j-u_\lambda)^2+2\Theta\int_{\overline{S}_j}
    |\nabla (u_\lambda-\overline{u})|^2\,dx\to 0\qquad\text{as $j\to+\infty$},
 \end{align*}
 where we have also used the equivalence between $\rho_\e$ and $\|\cdot\|_{H^1_0(\Omega)}$, see
 \eqref{eq:equivalenceuniforme}.
 Ga\-the\-ring these facts, we then infer the existence of some $j_0\geq 1$ such that
 $$A_j\geq \frac{1}{2}\rho_\e(\overline{w}_j)^2\Big\{1-\hat{C}\Big(\int_{\overline{S}_j}\overline{u}^{2^*}\,dx
   \Big)^{\frac{2^*-2}{2^*}}-\hat{C}\rho(\overline{w}_j)^{2^*-2}\Big\} \geq 0\quad
   \text{for all $j\geq j_0$}.$$
   By arguing in a very similar way, one can prove that $B_j\geq 0$ for every $j\geq j_0$
   (by possibly enlarging $j_0$ if needed); as a consequence, from 
   \eqref{eq:dovecontraddire} we get
   $$I_{\lambda,\e}(u_j)\geq I_{\lambda,\e}(u_\lambda) +A_j+B_j\geq I_{\lambda,\e}(u_\lambda),$$
   but this is contradiction with property ii) of the sequence $\{u_j\}_j$.
\end{proof}
By combining all the results established so far, we can give the
\begin{proof}[Proof (of Theorem \ref{thm:main}).] 
    Let $\Lambda_\e$ be as in \eqref{eq:DefinitionLambda}, that is, 
    $$\Lambda_\e := \sup \{ \lambda >0: \eqref{eq:Problem}_\lambda \textrm{ admits a weak solution}\}.$$
    On account of Lemma \ref{lem:Lambdafinito}, we know that $\Lambda_\e\in (0,+\infty)$; moreover,
    by Lemma \ref{lem:2.3Haitao} we know that
    problem 
    \eqref{eq:Problem}$_\Lambda$ possesses at least one weak solution $u_\lambda$
    for every $0<\lambda\leq\Lambda_\e$,
    and thus assertion a) in the statement of the theorem is established.     
    
    On the other hand,
    by the very
    definition of $\Lambda_\e$ we derive that \eqref{eq:Problem}$_\lambda$
    \emph{does not admit} weak solutions when $\lambda > \Lambda_\e$, and this establishes also
    assertion b).
    
    Finally, if $0<\lambda<\Lambda_\e$
    we know from Lemma \ref{lem:25Haitao} that the solution $u_\lambda$ is
    a local mi\-ni\-mi\-zer of $I_{\lambda,\e}$, and the proof is complete.
    \end{proof}
\section{Existence of a second solution}\label{sec:proofThm2}
As described in the Introduction, in this last section
we provide the proof
of the multiplicity result in Theorem \ref{thm:main2}; since this result
holds for $\e$ sufficiently small (hence, when $\LL$ is sufficiently
close to $-\Delta$), here we need to \emph{move $\e$}
and to exploit the \emph{uniform estimates} proved in the previous sections.
\medskip

To begin with, we fix once and for all a number $r > 0$ in such a way that
\emph{both}
 Lemma \ref{lem:Lambdafinito}\,-\,i) and Theorem \ref{thm:uniformLinf}\,-\,ii)
 do apply; then, we can find $\lambda_* > 0$, only depending on $r_0,\gamma$ and on the measure
 of $\Omega$, such that the following facts hold:
\begin{itemize}
 \item[a)]
 \emph{for every $0<\e\leq 1$ and for every $0<\lambda<\lambda_*$},
 problem \eqref{eq:Problem}$_\lambda$ possesses a weak solution $
 u = u_{\lambda,\e}\in\mathcal{X}^{1,2}(\Omega)$, further satisfying
 \begin{itemize}
  \item[{i)}] $\|u_{\lambda,\e}\|_{H_0^1(\Omega)}\leq r$;
  \item[{ii)}] $u_{\lambda,\e}$ is a local minimizer of the functional $I_{\lambda,\e}$
  in \eqref{eq:functionalIlambdae}.
\end{itemize}
 \item[b)] there exists a constant $C = C(n,\lambda_*,\Omega) > 0$ such that
 $$\|u_{\lambda,\e}\|_{L^{\infty}(\Omega)} \leq C.$$
\end{itemize}
We explicitly point out that, even if it is not obtained
by the Perron method in Lemma \ref{lem:2.2Haitao}, the function $u_{\lambda,\e}$ 
\emph{is a weak solution
of problem \eqref{eq:Problem}$_\lambda$}; as a consequence, from the proof
of Lemma \ref{lem:2.3Haitao} we still derive that
\begin{equation} \label{eq:lowerboundWMP}
 u_{\lambda,\e}\geq w_{\lambda,\e}\quad\text{a.e.\,in $\Omega$},
\end{equation}
where $w_{\lambda,\e}$ is the unique solution of the purely singular
problem \eqref{eq:NoPerturbation}.
\vspace*{0.1cm}

We now turn to prove the following key lemma.
\begin{lemma}\label{lem:CrucialLemma}
Let $\lambda \in (0,\lambda_{\star})$ be fixed \emph{(}with $\lambda_*>0$ as in assertion 2\emph{)}\emph{)}.
Then, there exist $\varepsilon_{0} \in (0,1)$, $R_{0}>0$ and a positive function $\Psi\in\mathcal{X}^{1,2}(\Omega)$ such that
\begin{equation} \label{eq:CrucialLemmaEq}
\left\{ \begin{array}{lr}
I_{\lambda}(u_{\lambda}+R\Psi) < I_{\lambda}(u_{\lambda}) & \textrm{for all  $\varepsilon \in (0,\varepsilon_{0})$ and $R\geq R_{0}$},\\
I_{\lambda}(u_{\lambda}+t R_{0}\Psi) < I_{\lambda}(u_{\lambda}) + \tfrac{1}{n} S_{n}^{n/2} & \textrm{for all  $\varepsilon \in (0,\varepsilon_{0})$ and $t \in [0,1]$}.
\end{array}\right.
\end{equation}
\begin{proof}
  We closely follow the approach in \cite[Lemma 2.7]{Haitao}. First of all, we 
  choose a \emph{Lebesgue point} of $u_\lambda$ in $\Omega$, say $y$, 
  and we let $r > 0$ be such that $B_r(y)\Subset\Omega$;
  we then choose a cut-off function $\varphi\in C_0^\infty(\Omega)$ such that 
  \begin{itemize}
   \item[$(\ast)$] $0\leq\varphi\leq 1$ in $\Omega$;
   \item[$(\ast)$] $\varphi\equiv 1$ on $B_r(y)$;
  \end{itemize}
  and we consider the one-parameter family of functions
  \begin{equation}\label{eq:Scelta_Talenti}
  U_\e = V_\e\,\varphi,\qquad
  \text{where}\,\,V_\e(x) = \frac{\e^{\frac{\alpha(n-2)}{2}}}{(\e^{2\alpha}+|x-y|^2)^{\frac{n-2}{2}}},
  \end{equation}
  where $\alpha > 0$ will be appropriately chosen later on.
  Notice that this family $\{V_\e\}_\e$
  is the well-kno\-wn family of the \emph{Aubin-Ta\-len\-ti functions}, which are
  the unique (up to translation) extremals in the (local) Sobolev inequality; 
  this means that
  \begin{equation} \label{eq:Veminim}
   \frac{\||\nabla V_\e|\|^2_{L^2(\R^n)}}{\|V_\e\|^2_{L^{2^*}(\R^n)}} = S_n,
  \end{equation}
  where $S_n > 0$ is the \emph{best constant} in the Sobolev inequality.   
  
  \noindent We now recall that, owing to \cite[Lemma 1.1]{BN}, we have (as $\e \to 0^+$)
  \begin{equation} \label{eq:estimVeLocalBN}
   \begin{split}
    \mathrm{i)}&\,\,\| U_\e\|^2_{H^1_0(\Omega)} = 
    \e^{\alpha(n-2)}\Big(\frac{K_1}{\e^{\alpha(n-2)}}+O(1)\Big)
    = K_1+o(\e^{\frac{\alpha(n-2)}{2}}); \\[0.1cm]
    \mathrm{ii)}&\,\,\|U_\e\|^{2^*}_{L^{2^*}(\Omega)} = \e^{\alpha n}\Big(\frac{K_2}{\e^{\alpha n}}+O(1)\Big)
    = K_2+o(\e^{\alpha n/2});
   \end{split}
  \end{equation}
  where the constants $K_1,K_2 > 0$ are given, respectively, by
  $$K_1 = \||\nabla V_1|\|^2_{L^2(\R^n)},\qquad K_2 = \int_{\R^n}\frac{1}{(1+|x|^2)^n}\,dx$$
  and they satisfy 
  $K_1/K_2^{1-2/n} = S_n$, see the above \eqref{eq:Veminim}.
  On the other hand, by combining
   \eqref{eq:estimVeLocalBN}\,-\,ii) with
   \cite[Lemma 4.11]{BDVV5},
  we also have 
  \begin{equation} \label{eq:estimVeNonlocal}
  \begin{split}
   \e \, [U_\e]^2_s  & \leq \e \, \|U_\e\|^2_{L^{2^*}(\Omega)}\cdot O(\e^{\alpha(2-2s)}) = \|U_\e\|^2_{L^{2^*}(\Omega)}\cdot O(\e^{1+\alpha(2-2s)})
   \\
   & = \big(K_2+o(\e^{\alpha n/2})\big)^{2/2^*}\cdot O(\e^{1+\alpha(2-2s)}) = o(\e^{1+\alpha(1-s)}).
   \end{split}
  \end{equation}
Defining
  $w = u_\lambda + tRU_\e$ (for $R\geq 1$ and $t\in[0,1]$),
  we then obtain 
  \begin{equation*}
  \begin{split}
   I_{\lambda,\e}(w) & = I_{\lambda,\e}(u_\lambda) 
   + \frac{t^2R^2}{2}\rho_\e(U_\e)^2+tR\mathcal{B}_{\rho_{\e}}(u_\lambda,U_\e)
   \\
   & \qquad
   - \frac{1}{2^*}\big(\|u_\lambda+tRU_\e\|^{2^*}_{L^{2^*}(\Omega)}-
    \|u_\lambda\|^{2^*}_{L^{2^*}(\Omega)}\big) \\
   &\qquad
   -\frac{\lambda}{1-\gamma}\Big(\int_{\Omega}|u_\lambda+tRU_\e|^{1-\gamma}\,dx
   - \int_{\Omega}|u_\lambda|^{1-\gamma}\,dx\Big) ,
   \end{split}
   \end{equation*}
   \noindent and recalling that $u_\lambda$ solves \eqref{eq:Problem}$_{\lambda}$, we further get
    \begin{equation*}
  \begin{split}
   I_{\lambda,\e}(w) &= I_{\lambda,\e}(u_\lambda)
   + \frac{t^2R^2}{2}\rho_\e(U_\e)^2
   +tR\int_\Omega(\lambda u_\lambda^{-\gamma}+u_\lambda^{2^*-1})U_\e\,d x
   \\
   & \qquad
   - \frac{1}{2^*}\big(\|u_\lambda+tRU_\e\|^{2^*}_{L^{2^*}(\Omega)}-
    \|u_\lambda\|^{2^*}_{L^{2^*}(\Omega)}\big) \\
   &\qquad
   -\frac{\lambda}{1-\gamma}\Big(\int_\Omega|u_\lambda+tRU_\e|^{1-\gamma}\,dx
   - \int_\Omega|u_\lambda|^{1-\gamma}\,dx\Big) \\
   & = I_{\lambda,\e}(u_\lambda)
   + \frac{t^2R^2}{2}\rho_\e(U_\e)^2
   - \frac{t^{2^*}R^{2^*}}{2^*}\|U_\e\|^{2^*}_{L^{2^*}(\Omega)} \\
   & \qquad - t^{2^*-1}R^{2^*-1}\int_\Omega U_\e^{2^*-1}u_\lambda\,dx
   - \mathcal{R}_\e-\mathcal{D}_\e, \\
  \end{split}
  \end{equation*}
   where we have introduced the notation
   \begin{align*}
    (\ast)&\,\,\mathcal{R}_\e = \frac{1}{2^*}\int_\Omega\Big\{|u_\lambda+tRU_\e|^{2^*}
    - u_\lambda^{2^*}-(tRU_\e)^{2^*}
    \\
    & \qquad\qquad\qquad 
     - 2^* u_\lambda (tRU_\e)\big(u_\lambda^{2^*-2}+ (tRU_\e)^{2^*-2}\big)\Big\}dx; \\
     (\ast)&\,\,\mathcal{D}_\e = 
     \frac{\lambda}{1-\gamma}\int_\Omega
     \Big\{|u_\lambda+tRU_\e|^{1-\gamma}- |u_\lambda|^{1-\gamma}-
     tR(1-\gamma)u_\lambda^{-\gamma}U_\e\Big\}dx.
   \end{align*}

We start estimating $\mathcal{R}_{\e}$. To this aim, we follow \cite[Proof of Theorem 1]{BN89} (from equation (17) on) where the main difference is due to the fact that $u_{\lambda}$ is actually dependent on $\e$ as well. However, by either using Proposition \ref{prop:esistenzaGarain}-iii) (recalling that the solution of the purely singular problem is a sub-solution of \eqref{eq:Problem}$_{\lambda}$), or the uniform upper bound \eqref{eq:Stima_indip_da_epsilon} of the $L^{\infty}$-norm of $u_{\lambda}$ one can get
      \begin{equation} \label{eq:termRe}
    |-\mathcal{R}_\e| \leq R^\beta o(\e^{\frac{\alpha(n-2)}{2}})\quad\text{as $\e\to 0^+$},
   \end{equation}
   for some $\beta\in(0,2^*)$.
Regarding $\mathcal{D}_{\e}$, we can argue as in \cite[Lemma 2.7]{Haitao}, once again keeping in mind that now $u_{\lambda}$ depends on $\e$ as well. However, thanks to 
\eqref{eq:lowerboundWMP}
and to Proposition \ref{prop:esistenzaGarain}-iii), we can exploit \cite[equation (5.7)]{KRS}, getting
    \begin{equation} \label{eq:termDe}
    -\mathcal{D}_\e \leq C(tR+t^2R^2)\,o(\e^{\frac{\alpha(n-2)}{2}})\quad\text{as $\e\to 0^+$},
   \end{equation}
   for some constant $C > 0$ independent of $\e$.
   Finally, combining once again
   \eqref{eq:lowerboundWMP} and Proposition \ref{prop:esistenzaGarain}-iii), and arguing as in
    \cite[Proof of Lemma 4.11]{BDVV5}, we get
   \begin{equation}\label{eq:termUeTarantello}
   \begin{aligned}
     \int_\Omega U_\e^{2^*-1}u_\lambda\,dx &\geq 
     \int_{B_{r}(y)}V_{\e}^{2^*-1}w_{\lambda,\e}\,dx \geq c_2 \, \int_{B_{r}(y)}V_{\e}^{2^*-1}\,dx \\
     &\geq
     \e^{\alpha(n-2)/2}D_0 +
     o(\e^{\alpha(n-2)/2})\quad\text{as $\e\to 0^+$},
     \end{aligned}
   \end{equation}
 where $D_0 > 0$ is a constant only depending on $n$.

    Gathering \eqref{eq:estimVeLocalBN}-\eqref{eq:termUeTarantello}, we finally obtain
   \begin{equation}
   \begin{aligned}
   & I_{\lambda,\e} \leq I_{\lambda,\e}(u_\lambda)+\frac{t^2R^2}{2}K_1-
   \frac{t^{2^*}R^{2^*}}{2^*}K_2 \\
   &\qquad - t^{2^*-1}R^{2^*-1} D_0 \e^{\alpha(n-2)/2}
   - R^{\beta} o(\e^{\alpha(n-2)/2})+ C(tR+t^2R^2)\,o(\e^{\frac{\alpha(n-2)}{2}})
   \\
   &\qquad\qquad +\big(t^2R^2+t^{2^*}R^{2^*}\big)\big(o(\e^{1+\alpha(1-s)})+o(\e^{\alpha(n-2)/2})\big).
   \end{aligned}
\end{equation}    
    Thus, if we choose $\alpha\in(0,1]$ so small that  
   $1+\alpha(1-s)>\alpha(n-2)/2$
   (notice that this is always possible, since $n\geq 3$ and $s\in(0,1)$), we get (as $\e \to 0^+$)
\begin{equation} \label{eq:toconcludecomeinTar}
   \begin{split}
   I_\lambda(u_\lambda+tRU_\e) & \leq 
   I_\lambda(u_\lambda)+\frac{t^2R^2}{2}K_1-
   \frac{t^{2^*}R^{2^*}}{2^*}K_2 \\
   &\qquad - t^{2^*-1}R^{2^*-1}D_0 \e^{\alpha(n-2)/2}
   \\
   &\qquad\qquad 
   +C(tR+t^2R^2+t^{2^*}R^{2^*}+R^\beta)o(\e^{\alpha(n-2)/2}).
   \end{split}
   \end{equation}   
   Notice that the above choice of $\alpha$ is doable thanks to the presence of $\e$ in front of the nonlocal part, thus the above argument cannot be run for $\e=1$.\\
   Now, thanks to estimate \eqref{eq:toconcludecomeinTar} we are finally
   ready to complete the proof of the lemma: in fact, starting from this estimate
   and repeating word by word the argument in \cite[Lemma 3.1]{Tarantello}, we find
   $\e_0 > 0$ and $R_0 > 0$ such that
   \begin{equation*}
    \begin{cases}
     I_\lambda(u_\lambda+RU_\e) < I_\lambda(u_\lambda) & \text{for all $\e\in(0,\e_0)$ and $R\geq R_0$}, \\
      I_\lambda(u_\lambda+tR_0 U_\e) < I_\lambda(u_\lambda)+\frac{1}{n}S_n^{n/2}
      & \text{for all $\e\in(0,\e_0)$ and $t\in[0,1]$}.
    \end{cases}
   \end{equation*}
   Thus, the lemma is proved by choosing $\Psi = U_\e$ (with $\e < \e_0$ and $y,a$ as above).
\end{proof}
\end{lemma}

%

Thanks to Lemma \ref{lem:CrucialLemma}, we can now proceed toward the proof
of Theorem \ref{thm:main2}: in fact, we turn to show that problem
\eqref{eq:Problem}$_{\lambda}$ possesses a second solution $v_\lambda\neq u_\lambda$, provided
that $0<\lambda<\lambda_*$ and $0<\e<\e_0$, \emph{where $\e_0 > 0$ is as in Lemma \ref{lem:CrucialLemma}}.

Let then $0<\e<\e_0$ be arbitrarily but fixed (with $\e_0 > 0$ as in Lemma
\ref{lem:CrucialLemma}), and let $0<\lambda <\lambda_*$. Since we know that $u_\lambda$
is a local minimizer of the functional $I_\lambda$,
there exists some $0<\varrho_0 = \varrho_0(\e) \leq \rho_\e(u_\lambda)$ such that
 \begin{equation} \label{eq:ulambdaminEkeland}
  I_\lambda(u)\geq I_\lambda(u_\lambda)\quad\text{for every $u\in\mathcal{X}^{1,2}(\Omega)$ with
 $\rho_\e(u-u_\lambda)<\varrho_0$}.
 \end{equation}
 As a consequence of \eqref{eq:ulambdaminEkeland}, if we consider the cone 
 \begin{equation*}
  T = \{u\in\mathcal{X}^{1,2}(\Omega):\,\text{$u\geq u_\lambda>0$ a.e.\,in $\Omega$}\},
 \end{equation*}
 only one of the following two cases hold:
 \begin{itemize}
  \item[\textsc{A)}] $\inf\{I_\lambda(u):\,\text{$u\in T$ and $\rho_\e(u-u_\lambda) = \varrho$}\}
  = I_\lambda(u_\lambda)$ for every $0<\varrho<\varrho_0$;
  \vspace*{0.05cm}
  \item[\textsc{B)}] there exists $\varrho_1\in(0,\varrho_0)$ such that 
  \begin{equation} \label{eq:defvarrho}
   \inf\{I_\lambda(u):\,\text{$u\in T$ and $\rho_\e(u-u_\lambda) = \varrho_1$}\} > I_\lambda(u_\lambda).
   \end{equation}
 \end{itemize}
 We explicitly notice that use of $\rho_\e(\cdot)$ to define an open neighborhood
 of $u_\lambda$ in \eqref{eq:ulambdaminEkeland} is motivated by the fact that, since $\e\in (0,1)$,
 by Remark \ref{rem:spaceX12}\,-\,2) we have
 $$\|u\|_{H^1_0(\Omega)}\leq\rho_\e(u)\leq \rho(u)\leq \Theta\|u\|_{H^1_0(\Omega)}$$
 for some $\Theta > 0$ only depending on $n,s$, see \eqref{eq:equivalencerhoH01}; as a consequence,
 the norm $\rho_\e(\cdot)$ is globally equivalent to $\rho(\cdot)$ (and to the 
 $H_0^1$-norm), uniformly w.r.t.\,$\e$.
 \medskip

 Following \cite{Haitao}, we then turn to consider the cases A)-B) separately.
 \begin{proposition} \label{prop:Lemma26Haitao}
  Assume that 
  \textsc{Case A)} holds. Then, for e\-ve\-ry $\varrho\in(0,\varrho_0)$ there exists
  a solution $v_\lambda$ of problem $(\mathrm{P}_{\lambda,\e})$ such that
  $$\text{$\rho_\e(u_\lambda-v_\lambda) = \varrho$}.$$
  In particular, $v_\lambda\not\equiv u_\lambda$.
 \end{proposition}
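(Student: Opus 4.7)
The plan is to realize $v_\lambda$ as a constrained minimizer of $I_\lambda$ on the complete metric space
\[
\mathcal{C}_\varrho := T \cap \{u \in \mathcal{X}^{1,2}(\Omega) : \rho(u - u_\lambda) = \varrho\}
\]
via the Ekeland variational principle. Since Case~A) gives $\inf_{\mathcal{C}_\varrho} I_\lambda = I_\lambda(u_\lambda)$ and $I_\lambda$ is continuous and bounded below on $\mathcal{C}_\varrho$, Ekeland produces a sequence $\{v_j\}\subset\mathcal{C}_\varrho$ with $I_\lambda(v_j)\to I_\lambda(u_\lambda)$ satisfying $I_\lambda(u) \geq I_\lambda(v_j) - \tfrac{1}{j}\rho(u-v_j)$ for all $u \in \mathcal{C}_\varrho$. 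Expanding this inequality along admissible variations---i.e.\ variations preserving at first order both the sphere constraint and the cone condition $u\ge u_\lambda$---yields an approximate Euler--Lagrange identity with Lagrange multipliers $\mu_j\in\R$ for the sphere:
\[
\mathcal{B}_\rho(v_j,\varphi) - \lambda\int_\Omega v_j^{-\gamma}\varphi\,dx - \int_\Omega v_j^{2^*-1}\varphi\,dx = \mu_j\,\mathcal{B}_\rho(v_j-u_\lambda,\varphi) + o_j(1)\,\rho(\varphi),
\]
holding for every $\varphi\in\mathcal{X}^{1,2}(\Omega)$ supported where $v_j>u_\lambda$ (so that $\pm\varphi$ are both admissible).

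Since $\rho(v_j-u_\lambda)=\varrho$, the sequence is bounded; up to a subsequence $v_j\rightharpoonup v_\lambda$ weakly in $\mathcal{X}^{1,2}(\Omega)$, strongly in $L^p(\Omega)$ for every $1\le p<2^*$, and pointwise a.e. The cone $T$ being convex and closed gives $v_\lambda\ge u_\lambda$ a.e., and Corollary \ref{cor:BrezisNirenbergpernoi} applied to $u_\lambda$ furnishes the local lower bound $u_\lambda\ge C>0$ on compact subsets of $\Omega$ needed to dominate $v_j^{-\gamma}\varphi$ by $u_\lambda^{-\gamma}|\varphi|$ and pass the singular term to the limit via dominated convergence. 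To conclude that $v_\lambda$ is a weak solution of \eqref{eq:Problem}$_\lambda$ distinct from $u_\lambda$, two steps remain: \emph{(i)} show $\mu_j\to 0$, which I plan to do by testing the approximate identity against $\varphi=v_\lambda-u_\lambda$ (an admissible direction in the tangent cone at $v_\lambda$, since $(1\pm t)v_\lambda \mp t u_\lambda\ge u_\lambda$ for small $t$) and invoking the equation solved by $u_\lambda$; and \emph{(ii)} upgrade the weak convergence to strong convergence in $\mathcal{X}^{1,2}(\Omega)$, which preserves the constraint $\rho(v_\lambda-u_\lambda)=\varrho>0$ and therefore rules out $v_\lambda\equiv u_\lambda$.

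The main obstacle is step \emph{(ii)}, because of the critical nonlinearity $u^{2^*-1}$. Setting $w_j := v_j - v_\lambda$, the Brezis--Lieb lemma combined with strong $L^{1-\gamma}$ convergence (as in the proof of Lemma \ref{lem:Lambdafinito}) yields
\[
I_\lambda(v_j) = I_\lambda(v_\lambda) + \tfrac{1}{2}\rho(w_j)^2 - \tfrac{1}{2^*}\|w_j\|_{L^{2^*}(\Omega)}^{2^*} + o_j(1), \qquad \varrho^2 = \rho(v_\lambda-u_\lambda)^2 + \rho(w_j)^2 + o_j(1).
\]
Since $\rho(v_\lambda-u_\lambda)\le\varrho<\varrho_0$, the local minimality of $u_\lambda$ (Lemma \ref{lem:25Haitao}) gives $I_\lambda(v_\lambda)\ge I_\lambda(u_\lambda)=\lim_j I_\lambda(v_j)$, so $\tfrac{1}{2}\rho(w_j)^2 - \tfrac{1}{2^*}\|w_j\|_{2^*}^{2^*}\to\ell\le 0$. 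This level lies strictly below the critical Palais--Smale threshold $\tfrac{1}{n}S_n^{n/2}$ associated with the Aubin--Talenti bubbles (recalled from \cite{BDVV5}), so concentration-compactness excludes the formation of a nontrivial bubble. Combined with the Poho\v{z}aev-type identity obtained by testing the approximate Euler--Lagrange equation against $w_j$ (an admissible direction after excising a small neighborhood of $\{v_j=u_\lambda\}$), this forces $\rho(w_j)\to 0$, yielding strong convergence, $\mu=0$, and the fact that $v_\lambda$ is the desired second weak solution with $\rho(v_\lambda-u_\lambda)=\varrho$.
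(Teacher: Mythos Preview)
Your overall plan---Ekeland on a constrained set, pass to a weak limit, then upgrade to strong convergence to preserve the distance constraint---matches the paper's. But there is a genuine gap in how you handle the sphere constraint, and the paper's proof avoids this gap by a trick you are missing.

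\medskip

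\textbf{The Lagrange multiplier is the problem.} By applying Ekeland directly on the sphere $\mathcal{C}_\varrho = T\cap\{\rho(u-u_\lambda)=\varrho\}$ you are forced to carry a multiplier $\mu_j$, and your proposed argument for $\mu_j\to 0$ does not work as written. Testing against $\varphi = v_\lambda - u_\lambda$ gives
\[
I_\lambda'(v_j)(v_\lambda-u_\lambda)=\mu_j\,\mathcal{B}_\rho(v_j-u_\lambda,\,v_\lambda-u_\lambda)+o_j(1),
\]
and by weak convergence $\mathcal{B}_\rho(v_j-u_\lambda,\,v_\lambda-u_\lambda)\to\rho(v_\lambda-u_\lambda)^2$. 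But you have no reason at this stage to know $\rho(v_\lambda-u_\lambda)>0$ (that is exactly what you are trying to prove), nor that the left-hand side tends to zero: ``invoking the equation solved by $u_\lambda$'' only gives $I_\lambda'(u_\lambda)(\varphi)=0$, and the difference $I_\lambda'(v_j)-I_\lambda'(u_\lambda)$ applied to a fixed test function has no reason to vanish unless $v_j\to u_\lambda$ strongly, which contradicts the conclusion. So step~(i) is not justified, and without it step~(ii) cannot be carried out either (your ``Poho\v{z}aev-type identity'' tested against $w_j=v_j-v_\lambda$ still carries the unknown $\mu_j$).

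\medskip

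\textbf{The paper's fix.} The paper exploits that Case~A) is assumed to hold for \emph{every} radius $\varrho'\in(0,\varrho_0)$, not just one. It applies Ekeland on the closed annulus
\[
X=\{u\in T:\ \varrho-\delta\le\rho(u-u_\lambda)\le\varrho+\delta\}
\]
for small $\delta>0$. The infimum over $X$ is still $I_\lambda(u_\lambda)$ (each spherical shell inside $X$ has that infimum), and the Ekeland sequence $v_k$ satisfies $\rho(v_k-u_\lambda)\to\varrho$, so it lies in the radial interior of $X$. Consequently, for any $w\in T$ the perturbation $v_k+\varepsilon(w-v_k)$ stays in $X$ for small $\varepsilon$, and one obtains the clean one-sided inequality
\[
\mathcal{B}_\rho(v_k,w-v_k)-\lambda\!\int_\Omega v_k^{-\gamma}(w-v_k)\,dx-\!\int_\Omega v_k^{2^*-1}(w-v_k)\,dx\ \ge\ -\tfrac{1}{k}\,\rho(w-v_k)
\]
for all $w\in T$, \emph{with no Lagrange multiplier}. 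From here, choosing $w=v_k+\varepsilon\varphi+(v_k+\varepsilon\varphi-u_\lambda)_-$ yields that the weak limit $v_\lambda$ solves \eqref{eq:Problem}$_\lambda$; choosing $w=v_\lambda$ and $w=2v_k$ gives, after Brezis--Lieb, the matching inequalities
\[
\rho(v_k-v_\lambda)^2\le\|v_k-v_\lambda\|_{2^*}^{2^*}+o(1),
\qquad
\rho(v_k-v_\lambda)^2\ge\|v_k-v_\lambda\|_{2^*}^{2^*}+o(1),
\]
and combining these with $I_\lambda(v_k)-I_\lambda(v_\lambda)\le o(1)$ (from local minimality of $u_\lambda$) forces strong convergence. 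No concentration-compactness or bubble analysis is needed.
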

 \begin{proof}
  Let $0<\varrho<\varrho_0$ be arbitrarily fixed. Since we are assuming that \textsc{Case A)} holds,
  we can find a sequence $\{u_k\}_k\subseteq T$ satisfying the following properties:
  \begin{itemize}
   \item[a)] $\rho_\e(u_k-u_\lambda) = \varrho$ for every $k\geq 1$;
   \vspace*{0.05cm}
   
   \item[b)] $I_\lambda(u_k)\to I_\lambda(u_\lambda) =:\mathbf{c}_\lambda$ as $k\to+\infty$.
  \end{itemize}
  We then choose $\delta > 0$ so small that $\varrho-\delta > 0$ and $\varrho+\delta<\varrho_0$ and,
  accordingly, we consider the subset of $T$ defined as follows:
  $$X = \{u\in T:\,\varrho-\delta\leq\rho_\e(u-u_\lambda)\leq \varrho+\delta\}
  \subseteq T$$
  (note that $u_k\in X$ for every $k\geq 1$, see a)). Since it is \emph{closed}, this set $X$
  is a \emph{complete metric space} when endowed with the distance induced by $\rho$;
  moreover, since $I_\lambda$ is a \emph{real-valued and continuous functional} on $X$, and since
  $$\textstyle\inf_{X}I_\lambda = I_\lambda(u_\lambda)$$
  we are entitled to apply
  the Ekeland Variational Principle (see \cite{AubEke}) to the functional $I_\lambda$ on $X$,
  providing us with a sequence $\{v_k\}_k\subseteq X$ such that
  \begin{equation} \label{eq:EkelandCaseA}
  \begin{split}
    \mathrm{i)}&\,\,I_\lambda(v_k)\leq I_\lambda(u_k)\leq I_\lambda(u_\lambda)+1/k^2, \\
    \mathrm{ii)}&\,\,\rho_\e(v_k-u_k)\leq 1/k, \\
    \mathrm{iii)}&\,\,I_\lambda(v_k)\leq I_\lambda(u)+1/k\cdot
     \rho_{\e}(v_k-u)\quad\text{for every $u\in X$}.    
    \end{split}
  \end{equation}
  We now observe that, since $\{v_k\}_k\subseteq X$ and since the set $X$ is \emph{bounded}
 in $\mathcal{X}^{1,2}(\Omega)$, 
  there exists  $v_\lambda\in\mathcal{X}^{1,2}(\Omega)$ such that (as $k\to+\infty$ and 
  up to a sub-sequence)
  \begin{equation} \label{eq:limitvkCaseA}
  \begin{split}
   \mathrm{i)}&\,\,\text{$v_k\to v_\lambda$ weakly in $\mathcal{X}^{1,2}(\Omega)$}; \\
   \mathrm{ii)}&\,\,\text{$v_k\to v_\lambda$ strongly in $L^p$ for every $1\leq p<2^*$}; \\
   \mathrm{iii)}&\,\,\text{$v_k\to v_\lambda$ pointwise
   a.e.\,in $\Omega$}.
   \end{split}
  \end{equation}
  where we have also used the \emph{compact embedding} 
  $\mathcal{X}^{1,2}(\Omega)\hookrightarrow L^2(\Omega)$. 
  To complete the proof, we then turn to prove the following two facts:
  \begin{itemize}
   \item[1)] $v_\lambda$ is a weak solution of \eqref{eq:Problem}$_{\lambda}$;
   \item[2)] $\rho_\e(v_\lambda-u_\lambda) = \varrho > 0$.
  \end{itemize}
  \vspace*{0.1cm}
  
  \noindent \emph{Proof of} 1). To begin with, we fix
  $w\in T$ and we choose $\nu_0 = \nu_0(w,\lambda) > 0$ so small that 
   $u_\nu = v_k+\nu(w-v_k) \in X$ for every $0<\nu<\nu_0$.
   We explicitly stress that the existence of such an $\nu_0$ easily follows
   from \eqref{eq:EkelandCaseA}-ii) and property a) above.
   \vspace*{0.05cm}
   
   On account of \eqref{eq:EkelandCaseA}-iii)
   (with $u = u_\nu$), we have
   $$\frac{I_\lambda(v_k+\nu(w-v_k))-I_\lambda(v_k)}{\nu}
   \geq -\frac{1}{k}\rho_\e(w-v_k);$$
   From this, by letting $\nu\to 0^+$ and by proceeding \emph{exactly}
   as in the proof of \cite[Lemma 2.2]{Haitao}, we obtain the following estimate
   \begin{equation} \label{eq:221Haitao}
   \begin{split}
    -\frac{1}{k}\rho_\e(w-v_k) & \leq \mathcal{B}_{\e}(v_k,w-v_k)
    - \int_\Omega v_k^{2^*-1}(w-v_k)\,dx \\
    & \qquad-\lambda\int_\Omega v_k^{-\gamma}(w-v_k)\,dx,
   \end{split}
   \end{equation}
   Now, given any $\varphi\in\mathcal{X}^{1,2}(\Omega)$ and any $\nu > 0$, we define
   \begin{itemize}
    \item[$(\ast)$] $\psi_{k,\nu} = v_k+\nu\varphi-u_\lambda$ and $\phi_{k,\nu} = (\psi_{k,\nu})_-$;
    \vspace*{0.05cm}
    
    \item[$(\ast)$] $\psi_\nu = v_\lambda+\nu\varphi-u_\lambda$ and $\phi_\nu = (\psi_\nu)_-$.
   \end{itemize}
   Since, obviously, $w = v_k+\nu\varphi+\phi_{k,\nu}\in T$,
   by exploiting \eqref{eq:221Haitao} we get
   \begin{equation} \label{eq:topasstothelimitCaseASol}
    \begin{split}
	-\frac{1}{k}\rho_\e(\nu\varphi+\phi_{k,\nu}) & \leq \mathcal{B}_{\e}(v_k,\nu\varphi+\phi_{k,\nu})
    - \int_\Omega v_k^{2^*-1}(\nu\varphi+\phi_{k,\nu})\,dx \\
    & \qquad-\lambda\int_\Omega v_k^{-\gamma}(\nu\varphi+\phi_{k,\nu})\,dx.
    \end{split}
   \end{equation}
   Then, we aim to pass to the limit as $k\to+\infty$ and $\nu\to 0^+$ in the above 
   \eqref{eq:topasstothelimitCaseASol}. To this end we first observe that, 
   on account of \eqref{eq:limitvkCaseA}-iii), we have
   \begin{equation} \label{eq:phiketophik}
    \text{$\phi_{k,\nu}\to\phi_\nu$ pointwise a.e.\,in $\Omega$ as $k\to+\infty$};
   \end{equation}
   moreover, by the very definition of $\phi_{k,\nu}$ we also have the following estimate
   $$|\phi_{k,\nu}| =  (u_\lambda-\nu\varphi-v_k)\cdot
    \mathbf{1}_{\{u_\lambda-\nu\varphi-v_k
    \geq 0\}}\leq u_\lambda+\nu|\varphi|;$$
    thus, since $v_k\geq u_\lambda > 0$ a.e.\,in $\Omega$ (as $v_k\in X\subseteq T$), we get
   \begin{equation} \label{eq:perfareLebesgueCaseA}
   \begin{split}
     \mathrm{i)}&\,\,v_k^{2^*-1}|\phi_{k,\nu}|
    = v_k^{2^*-1}(u_\lambda-\nu\varphi-v_k)\cdot
    \mathbf{1}_{\{u_\lambda-\nu\varphi-v_k
    \geq 0\}}\leq (u_\lambda+\nu|\varphi|)^{2^*};\\
    \mathrm{ii)}&\,\,v_k^{-\gamma}|\nu\varphi+\phi_{k,\nu}|
    \leq u_\lambda^{-\gamma}(u_\lambda+2\nu|\varphi|).
    \end{split}
   \end{equation}
    On account \eqref{eq:phiketophik}-\eqref{eq:perfareLebesgueCaseA},
    and since $u_\lambda^{-\gamma}|\varphi|\in L^1(\Omega)$
    (see Remark \ref{rem:defweaksolPb}-2)), we can apply
    Lebesgue's Dominated Convergence theorem, obtaining
    \begin{equation*}
      \begin{split}
       & \lim_{k\to+\infty}
        \int_\Omega v_k^{2^*-1}\phi_{k,\nu}\,dx
        = \int_\Omega v_\lambda^{2^*-1}\phi_\nu\,dx, \\
        & \lim_{k\to+\infty}
        \int_\Omega v_k^{-\gamma}(\nu\varphi+\phi_{k,\nu})\,dx
        = \int_\Omega v_\lambda^{-\gamma}(\nu\varphi+\phi_\nu)\,dx.
      \end{split}
    \end{equation*}
   This, together with \eqref{eq:limitvkCaseA}-ii), allows us to conclude that
   \begin{equation} \label{eq:limitNONLIN}
   \begin{split}
    & \lim_{k\to+\infty}
    \Big(\int_\Omega v_k^{2^*-1}(\nu\varphi+\phi_{k,\nu})\,dx 
    +\lambda\int_\Omega v_k^{-\gamma}(\nu\varphi+\phi_{k,\nu})\,dx\Big) \\
    & \qquad\qquad
    = \int_\Omega v_\lambda^{2^*-1}(\nu\varphi+\phi_{\nu})\,dx 
    + \lambda\int_\Omega v_\lambda^{-\gamma}(\nu\varphi+\phi_{\nu})\,dx.
    \end{split}
   \end{equation}
   As regards the \emph{operator term} $\mathcal{B}_{\e}(v_k,\nu\varphi+\phi_{k,\nu})$
   we first observe that, by using the computations
   already carried out in 
   \cite[Lemma 3.4]{BadTar} (for the local part) and in \cite[Lemma 4.1]{GiacMukSre}
   (for the nonlocal part), we have
   \begin{equation*}
    \begin{split}
    & \mathcal{B}_{\e}(v_k,\phi_{k,\nu})  \\
    &\qquad = \int_{\Omega}\nabla v_k\cdot\nabla \phi_{k,\nu}\,dx
      + \e\iint_{\R^{2n}}\frac{(v_k(x)-v_k(y))(\phi_{k,\nu}(x)-\phi_{k,\nu}(y))}{|x-y|^{n+2s}}\,dx\,dy
      \\
      & \qquad =
      \int_{\{v_k+\nu\varphi\leq u_\lambda\}}
      \nabla v_k\cdot\nabla (u_\lambda-\nu\varphi-v_\lambda)\,dx  \\
      & \qquad\qquad +  \int_{\{v_k+\nu\varphi\leq u_\lambda\}}
      \nabla v_k\cdot\nabla (v_\lambda-v_k)\,dx \\
      & \qquad\qquad 
       + \e\iint_{\R^{2n}}\frac{(v_k(x)-v_k(y))(\phi_{\nu}(x)-\phi_{\nu}(y))}{|x-y|^{n+2s}}\,dx\,dy \\
      & \qquad\qquad
      + \e\iint_{\R^{2n}}\frac{(v_k(x)-v_k(y))}{|x-y|^{n+2s}}
      \big((\phi_{k,\nu}-\phi_\nu)(x)-(\phi_{k,\nu}-\phi_\nu)(y))\big)\,dx\,dy\\
      & \qquad \leq \int_\Omega \nabla v_k\cdot\nabla \phi_\nu\,dx \\
      &\qquad\qquad + 
      \e\iint_{\R^{2n}}\frac{(v_k(x)-v_k(y))(\phi_{\nu}(x)-\phi_{\nu}(y))}{|x-y|^{n+2s}}\,dx\,dy
      +o(1) \\
      & \qquad = 
      \mathcal{B}_{\e}(v_k,\phi_\nu)+o(1)\qquad\text{as $k\to+\infty$}.
      \phantom{\iint_{\R^{2n}}}
    \end{split}
   \end{equation*}
   This, together with the fact that $v_k\to v_\lambda$ weakly in $\mathcal{X}^{1,2}(\Omega)$,
   gives
   \begin{equation} \label{eq:limitOPERATORPART}
    \mathcal{B}_{\e}(v_k,\nu\varphi+\phi_{k,\nu}) \leq \mathcal{B}_{\e}(v_\lambda,\nu\varphi+\phi_{\nu})+
    o(1)\qquad\text{as $k\to+\infty$}.
   \end{equation}
   Gathering \eqref{eq:limitNONLIN} and \eqref{eq:limitOPERATORPART}, and taking into account
   that ${\rho_\e}(\phi_{k,\nu})$ is \emph{uniformly bo\-un\-ded} with respect to $k$ 
   (as the same is true of $v_k$), we can finally
   pass to the limit as $k\to+\infty$ in \eqref{eq:topasstothelimitCaseASol}, obtaining
   \begin{equation} \label{eq:afterlimitkCaseA}
    \begin{split}
     \mathcal{B}_{\e}(v_\lambda,\nu\varphi+\phi_{\nu})
     \geq \int_\Omega v_\lambda^{2^*-1}(\nu\varphi+\phi_{\nu})\,dx 
    + \lambda\int_\Omega v_\lambda^{-\gamma}(\nu\varphi+\phi_{\nu})\,dx.
    \end{split}
   \end{equation}
   With \eqref{eq:afterlimitkCaseA} at hand, we can now exploit
   once again the computations carried out in \cite[Lemma 2.6]{Haitao}
   and in \cite[Lemma 4.1]{GiacMukSre}, getting
   \begin{equation} \label{eq:226Haitao}
    \begin{split}
    & \mathcal{B}_{\e}(v_\lambda,\varphi)-
     \lambda\int_\Omega v_\lambda^{-\gamma}\varphi\,dx 
     - \int_\Omega v_\lambda^{2^*-1}\varphi\,dx  \\
     & \qquad
     \geq-\frac{1}{\nu}\Big(\mathcal{B}_{\e}(v_\lambda,\phi_\nu)
     - \lambda\int_\Omega v_\lambda^{-\gamma}\phi_\nu\,dx 
     - \int_\Omega v_\lambda^{2^*-1}\phi_\nu\,dx\Big) \\
     & \qquad = \frac{1}{\nu}\Big(-\mathcal{B}_{\e}(v_\lambda-u_\lambda,\phi_\nu)
     + \lambda\int_\Omega (v_\lambda^{-\gamma}-u_\lambda^{-\gamma})\phi_\nu\,dx \\
     & \qquad\qquad +
     \int_\Omega (v_\lambda^{2^*-1}-u_\lambda^{2^*-1})\phi_\nu\,dx\Big) = (\bigstar),
    \end{split}
   \end{equation}
   where we have used the fact that $u_\lambda$ is a solution 
   of problem \eqref{eq:Problem}$_\lambda$;
   from this, recalling that
   $v_\lambda = \textstyle\lim_{k\to+\infty}v_k\geq u_\lambda$ 
   (see the above \eqref{eq:limitvkCaseA}\,-\,iii)), we obtain
   \begin{align*}
    (\bigstar) & \geq \frac{1}{\nu}\Big(
    -\int_{\{v_\lambda+\nu\varphi\leq u_\lambda\}}\nabla (v_\lambda-u_\lambda)\cdot
    \nabla (v_\lambda-u_\lambda+\nu\varphi)\,dx \\
    & \qquad\qquad-\e\iint_{\R^{2n}}\frac{((v_\lambda-u_\lambda)(x)-(v_\lambda-u_\lambda)(y))
    (\phi_\nu(x)-\phi_\nu(y))}{|x-y|^{n+2s}}\,dx\,dy\\
    & \qquad\qquad+ \lambda\int_{\{v_\lambda+\nu\varphi\leq u_\lambda\}} (v_\lambda^{-\gamma}-u_\lambda^{-\gamma})
    (v_\lambda-u_\lambda-\nu\varphi)\,dx\Big) \\
    &\qquad \geq \text{$o(1)$ as $\nu\to 0^+$}; \phantom{\iint_{\R^{2n}}}
   \end{align*}
   of problem 
   as a consequence, by letting $\nu\to 0^+$ in \eqref{eq:226Haitao}, we obtain
   $$\mathcal{B}_{\e}(v_\lambda,\varphi)-
     \lambda\int_\Omega v_\lambda^{-\gamma}\varphi\,dx 
     - \int_\Omega v_\lambda^{2^*-1}\varphi\,dx\geq 0.
     $$
     This, together with the \emph{arbitrariness} of the fixed $\varphi\in\mathcal{X}^{1,2}(\Omega)$,
     finally proves that the function
     $v_\lambda$ is a weak solution of problem \eqref{eq:Problem}$_{\lambda}$,
     as claimed.
     \medskip
     
     \noindent \emph{Proof of} 2). To prove assertion 2) it suffices to show that
     \begin{equation} \label{eq:claimvkstrongconv}
      \text{$v_k\to v_\lambda$ strongly in $\mathcal{X}^{1,2}(\Omega)$ as $k\to+\infty$}.
     \end{equation}
     In fact, owing to
     property a) of $\{u_k\}_k$ we have
     $$\varrho-{\rho_\e}(u_k-v_k)\leq{\rho_\e}(v_k-u_\lambda) \leq {\rho_\e}(v_k-u_k)+\varrho;$$
     this, together with \eqref{eq:claimvkstrongconv} and 
     \eqref{eq:EkelandCaseA}-ii), ensures that ${\rho_\e}(u_\lambda-v_\lambda) = \varrho.$ Hence,
     we turn to to prove \eqref{eq:claimvkstrongconv}, namely
      the \emph{strong convergence} of $\{v_k\}_k$ to $v_\lambda$. 
     \vspace*{0.1cm}
     
     First of all, since $v_k\to v_\lambda$ weakly in $\mathcal{X}^{1,2}(\Omega)$
     as $k\to+\infty$,
     we can proceed as in the proof
     of Lemma \ref{lem:Lambdafinito}, obtaining the following analogs
     of \eqref{eq:2.3Haitao}-to-\eqref{eq:2.5Haitao}:
     \begin{equation} \label{eq:comeLemma21Haitao}
     \begin{split}
      (\ast)&\,\,
       \int_{\Omega}v_{k}^{1-\gamma} \, dx = \int_{\Omega}v_\lambda^{1-\gamma}\, dx + o(1), \\
       (\ast)&\,\,\|v_k\|^{2^*}_{L^{2^*}(\Omega)} = \|v_\lambda\|^{2^*}_{L^{2^*}(\Omega)} + 
        \|v_k - v_\lambda\|^{2^*}_{L^{2^*}(\Omega)} +o(1);\phantom{\int_\Omega} \\
        (\ast)&\,\,{\rho_\e}(v_k)^2 = {\rho_\e}(v_\lambda)^2 + {\rho_\e}(v_k -v_\lambda)^2 + o(1).
        \phantom{\int_\Omega}
        \end{split}
     \end{equation}
     Moreover, by \eqref{eq:limitvkCaseA}-ii), we also get
     \begin{equation} \label{eq:comeHaitaoBis}
      \lim_{k\to+\infty}\int_\Omega|v_k-v_\lambda|^{1-\gamma}\,dx = 0
     \end{equation}
     Owing to \eqref{eq:comeLemma21Haitao}, 
     and choosing $w = v_\lambda\in T$ in \eqref{eq:221Haitao}, we then get
     \begin{equation*}
   \begin{split}
    & {\rho_\e}(v_k-v_\lambda)^2 
   = -\mathcal{B}_{\e}(v_k,v_\lambda-v_k)+\mathcal{B}_{\e}(v_\lambda,v_\lambda-v_k) \\
   & \qquad\leq \frac{1}{k}{\rho_\e}(v_k-v_\lambda)
    + \lambda\int_\Omega v_k^{-\gamma}(v_k-v_\lambda)\,dx
    \\
    & \qquad\qquad + \int_\Omega v_k^{2^*-1}(v_k-v_\lambda)\,dx
    + \mathcal{B}_{\e}(v_\lambda,v_\lambda-v_k) \\
    & \qquad (\text{since $\{v_k\}_k$ is bounded and $v_k\to v_\lambda$ weakly in $\mathcal{X}^{1,2}(\Omega)$}) \\
    & \qquad= \lambda\int_\Omega v_k^{1-\gamma}\,dx +
    \int_\Omega v_k^{2^*-1}(v_k-v_\lambda)\,dx - 
    \lambda\int_\Omega v_k^{-\gamma}v_\lambda\,dx + o(1) \\
    & \qquad
    = \lambda\int_\Omega v_\lambda^{1-\gamma}\,dx +
        \|v_k - v_\lambda\|^{2^*}_{L^{2^*}(\Omega)}
        +\|v_\lambda\|^{2^*}_{L^{2^*}(\Omega)}-\int_\Omega v_k^{2^*-1}v_\lambda\,dx
    \\
    &\qquad\qquad - 
    \lambda\int_\Omega v_k^{-\gamma}v_\lambda\,dx + o(1) \\
    & \qquad (\text{since $v_k\to v_\lambda$ strongly in $L^p(\Omega)$ for all $1\leq p< 2^*$}) \\
    & \qquad = \lambda\int_\Omega v_\lambda^{1-\gamma}\,dx +
    \|v_k - v_\lambda\|^{2^*}_{L^{2^*}(\Omega)}
    - 
    \lambda\int_\Omega v_k^{-\gamma}v_\lambda\,dx + o(1).
   \end{split}
   \end{equation*}
   On the other hand, since $0\leq v_k^{-\gamma}v_\lambda\leq u_\lambda^{-\gamma}v_\lambda\in L^1(\Omega)$
   (recall that $v_\lambda\geq u_\lambda$ and see
   Remark \ref{rem:defweaksolPb}-ii)), by Lebesgue's Dominated Convergence Theorem we have
   $$\int_\Omega v_k^{-\gamma}v_{\lambda}\,dx \to \int_\Omega v_\lambda^{1-\gamma}\,dx\qquad\text{
   as $k\to+\infty$};$$
   as a consequence, we obtain
   \begin{equation} \label{eq:228Haitao}
     {\rho_\e}(v_k-v_\lambda)^2\leq \|v_k-v_\lambda\|^{2^*}_{L^{2^*}(\Omega)}+o(1)\quad\text{as $k\to+\infty$}.
   \end{equation}
   To proceed further, we now choose $w = 2v_k\in T$ in \eqref{eq:221Haitao}: this yields
   \begin{equation*}
    {\rho_\e}(v_k)^2-\|v_k\|^{2^*}_{L^{2^*}(\Omega)}-\lambda\int_\Omega v_k^{1-\gamma}\,dx\geq -
    \frac{1}{k}{\rho_\e}(v_k)^2 = o(1);
   \end{equation*}
   thus, recalling that $v_\lambda$ is a weak solution of problem $(\mathrm{P}_{\lambda,\e})$,
   we get
   \begin{equation} \label{eq:231Haitao}
    \begin{split}
     {\rho_\e}(v_k-v_\lambda)^2 & =
     {\rho_\e}(v_k)^2-{\rho_\e}(v_\lambda)^2+o(1)  \\
     & \geq \Big(\|v_k\|^{2^*}_{L^{2^*}(\Omega)}+\lambda\int_\Omega v_k^{1-\gamma}\,dx\Big)
     - \mathcal{B}_{\e}(v_\lambda,v_\lambda) \\
     & = \|v_k\|^{2^*}_{L^{2^*}(\Omega)}+\lambda\int_\Omega v_k^{1-\gamma}\,dx
     - \|v_\lambda\|^{2^*}_{L^{2^*}(\Omega)}-\lambda\int_\Omega v_\lambda^{1-\gamma}\,dx \\
     & = \|v_k-v_\lambda\|^{2^*}_{L^{2^*}(\Omega)}+o(1)\quad\text{as $k\to+\infty$},
    \end{split}
   \end{equation}
   where we have also used \eqref{eq:comeLemma21Haitao}. Gathering 
   \eqref{eq:228Haitao}-\eqref{eq:231Haitao}, we then obtain
   \begin{equation} \label{eq:232Haitao}
    {\rho_\e}(v_k-v_\lambda)^2 = \|v_k-v_\lambda\|^{2^*}_{L^{2^*}(\Omega)}+o(1)\quad\text{as $k\to+\infty$}.
   \end{equation}
   With \eqref{eq:232Haitao} at hand, we can finally end the proof
   of \eqref{eq:claimvkstrongconv}. In fact, assuming (to fix the ideas) that $I_\lambda(u_\lambda)
   \leq I_\lambda(v_\lambda)$, from \eqref{eq:EkelandCaseA} and 
   \eqref{eq:comeLemma21Haitao}-\eqref{eq:comeHaitaoBis} we get
   \begin{align*}
    I_\lambda(v_k-v_\lambda) & = \frac{1}{2}{\rho_\e}(v_k-v_\lambda)^2
    + \frac{\lambda}{1-\gamma}\int_\Omega|v_k-v_\lambda|^{1-\gamma}\,dx
    + \frac{1}{2^*}\|v_k-v_\lambda\|^{2^*}_{L^{2^*}(\Omega)} \\
    & = I(v_k)-I(v_\lambda)+o(1) \leq I(u_\lambda)-I(v_\lambda)+\frac{1}{k^2}+o(1) \\
    & = o(1)\qquad\text{as $k\to+\infty$};
   \end{align*}
   this, together with \eqref{eq:comeHaitaoBis}, gives
   \begin{equation} \label{eq:233Haitao}
   \begin{split}
    & \frac{1}{2}{\rho_\e}(v_k-v_\lambda)^2
    -\frac{1}{2^*}\|v_k-v_\lambda\|^{2^*}_{L^{2^*}(\Omega)} \\
    & \qquad
     = I_\lambda(v_k-v_\lambda)+ \frac{\lambda}{1-\gamma}\int_\Omega|v_k-v_\lambda|^{1-\gamma}\,dx
     \leq o(1).
    \end{split}
   \end{equation}
   Thus, by combining \eqref{eq:232Haitao}-\eqref{eq:233Haitao}, we easily obtain
   $$\lim_{k\to+\infty}\|v_k-v_\lambda\|^{2^*}_{L^{2^*}(\Omega)} = \lim_{k\to+\infty}
   {\rho_\e}(v_k-v_\lambda)^2  =0,$$
   and this proves \eqref{eq:claimvkstrongconv}.
 \end{proof}
 \begin{proposition} \label{prop:Lemma27Haitao}
  Assume that 
  \textsc{Case B)} holds. Then, there exists
  a second solution $v_\lambda$ of problem \eqref{eq:Problem}$_{\lambda}$ such that
  $v_\lambda\not\equiv u_\lambda$.
 \end{proposition}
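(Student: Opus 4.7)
The plan is to produce a second critical point of $I_\lambda$ via a mountain-pass argument inside the cone $T$, with the far endpoint of the path built from the classical Aubin-Talenti family $\{U_\e\}_{\e>0}$. Case B supplies exactly the mountain-pass ``ridge'' that is needed: by \eqref{eq:defvarrho} every continuous path from $u_\lambda$ to infinity inside $T$ must cross the sphere $\{\rho(u-u_\lambda)=\varrho_1\}$ at height at least $I_\lambda(u_\lambda)+\delta$ for some $\delta>0$.

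For the ``valley beyond the ridge'' I would fix $\e>0$ small (to be optimized later) and, with a cut-off $\eta\in C_0^\infty(\Omega)$ supported where $u_\lambda\geq C>0$ (such a set exists by Remark \ref{rem:defweaksolPb}-3)), consider the curve $t\mapsto u_\lambda+t\,\eta U_\e$ for $t\geq 0$. Since $\eta U_\e\geq 0$ the curve lies in $T$, and the critical term $-\tfrac{1}{2^*}\|u_\lambda+t\eta U_\e\|_{L^{2^*}}^{2^*}$ drives $I_\lambda$ to $-\infty$ as $t\to+\infty$. Picking $t_0>0$ large enough so that $e:=u_\lambda+t_0\,\eta U_\e$ satisfies $\rho(e-u_\lambda)>\varrho_1$ and $I_\lambda(e)<I_\lambda(u_\lambda)$, I would set
$$c_\lambda := \inf_{\g\in\Gamma}\max_{t\in[0,1]}I_\lambda(\g(t)),\qquad \Gamma=\{\g\in C([0,1];T):\,\g(0)=u_\lambda,\,\g(1)=e\},$$
so that $c_\lambda\geq I_\lambda(u_\lambda)+\delta$ by Case B. An Ekeland-type argument on the complete metric space $T$, of the same flavor as in Proposition \ref{prop:Lemma26Haitao}, then produces a sequence $\{u_k\}_k\subseteq T$ that behaves as a Palais-Smale sequence for $I_\lambda$ at level $c_\lambda$.

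The main obstacle will be the strict upper bound
$$c_\lambda < I_\lambda(u_\lambda)+\tfrac{1}{n}\,S_n^{n/2},$$
the Brezis-Nirenberg threshold below which Palais-Smale compactness is restored. To obtain it I would estimate $\sup_{t\geq 0}I_\lambda(u_\lambda+t\eta U_\e)$ as $\e\to 0^+$: the local Dirichlet-energy/$L^{2^*}$-norm ratio produces exactly $S_n$; the nonlocal seminorm $[\eta U_\e]_{s,\R^n}^2$ scales as $O(\e^{2(1-s)})$ and is therefore of lower order; the cross-term $\int u_\lambda^{2^*-1}\eta U_\e$, exploiting $u_\lambda\geq C>0$ on $\mathrm{supp}(\eta)$, delivers a strictly negative gain of order $\e^{(n-2)/2}$ that beats the $O(\e^{n-2})$ remainder of the Aubin-Talenti expansion. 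Here the results of \cite{BDVV5}, identifying the mixed best Sobolev constant with $S_n$ and showing it is never achieved, are the precise input that legitimizes using the purely local Aubin-Talenti functions in this mixed setting.

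Once the level bound is secured, the argument closes by concentration-compactness. Boundedness of $\{u_k\}_k$ in $\mathcal{X}^{1,2}(\Omega)$ yields a weak limit $v_\lambda\in T$; the same limit procedure as in Proposition \ref{prop:Lemma26Haitao} (Lebesgue's dominated convergence for the singular term via the bound $v_k^{-\g}\leq u_\lambda^{-\g}$, weak convergence for the bilinear form $\mathcal{B}_\rho$, and a Brezis-Lieb split for the critical term) shows that $v_\lambda$ is a weak solution of \eqref{eq:Problem}$_\lambda$. The bound $c_\lambda<I_\lambda(u_\lambda)+\tfrac{1}{n}S_n^{n/2}$ forbids the loss of a single concentrating bubble in the Brezis-Lieb decomposition, forcing $u_k\to v_\lambda$ strongly in $\mathcal{X}^{1,2}(\Omega)$; hence $I_\lambda(v_\lambda)=c_\lambda>I_\lambda(u_\lambda)$ and in particular $v_\lambda\not\equiv u_\lambda$.
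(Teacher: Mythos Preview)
Your outline matches the paper's proof almost step for step: mountain-pass geometry inside the cone $T$ supplied by Case~B, far endpoint built from the Aubin--Talenti family (justified via \cite{BDVV5}), Ekeland's principle on the path space to produce an approximate Palais--Smale sequence, the Brezis--Nirenberg level bound $c_\lambda<I_\lambda(u_\lambda)+\tfrac{1}{n}S_n^{n/2}$, and then Brezis--Lieb plus the same limit machinery as in Proposition~\ref{prop:Lemma26Haitao} to recover strong convergence and conclude $I_\lambda(v_\lambda)=c_\lambda>I_\lambda(u_\lambda)$.

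There is, however, one concrete slip in your energy expansion. The cross-term you name, $\int_\Omega u_\lambda^{2^*-1}(\eta U_\e)\,dx$, is \emph{not} the one that produces the negative gain: it appears once with a positive sign from $\mathcal{B}_\rho(u_\lambda,t\eta U_\e)=t\int_\Omega(\lambda u_\lambda^{-\gamma}+u_\lambda^{2^*-1})\eta U_\e\,dx$ (using that $u_\lambda$ solves \eqref{eq:Problem}$_\lambda$) and once with a negative sign from the first-order piece of the expansion of $\|u_\lambda+t\eta U_\e\|_{L^{2^*}}^{2^*}$, and these cancel exactly. The surviving strictly negative term is the \emph{other} cross-term,
\[
-\,t^{2^*-1}\int_\Omega (\eta U_\e)^{2^*-1}\,u_\lambda\,dx,
\]
i.e.\ the one in which the bubble carries the high exponent. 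It is for \emph{this} integral that the lower bound $u_\lambda\geq C>0$ on $\mathrm{supp}(\eta)$ is used, yielding a contribution of exact order $\e^{(n-2)/2}$ that dominates the $o(\e^{(n-2)/2})$ remainders coming from the local gradient expansion, from $\mathcal{R}_\e$, from the singular-term remainder $\mathcal{D}_\e$, and from $[\eta U_\e]_s^2=O(\e^{2(1-s)})$. Once you swap in the correct cross-term, the level estimate goes through exactly as you describe and the rest of your argument is sound.
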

 \begin{proof}
  To begin with, we consider the set
  $$\Gamma = \big\{\eta\in C([0,1];T):\,\text{$\eta(0) = u_\lambda,\,I_\lambda(\eta(1))<I_\lambda(u_\lambda)$
  and ${\rho_\e}(\eta(1)-u_\lambda) > \varrho_1$}\big\},$$
  (where $\varrho_1 > 0$ is as in \eqref{eq:defvarrho}), and we claim that $\Gamma\neq\varnothing$.
  
  In fact, since the fixed $\e$ satisfies $0<\e<\e_0$ (with $\e_0 > 0$ as
  in Lemma \ref{lem:CrucialLemma}), 
  by the cited Lemma \ref{lem:CrucialLemma} we know that there exists $R_0 > 0$ such that
   \begin{equation} \label{eq:claimHaitao}
    \begin{cases}
     I_\lambda(u_\lambda+RU_\e) < I_\lambda(u_\lambda) & \text{for all $R\geq R_0$}, \\
      I_\lambda(u_\lambda+tR_0 U_\e) < I_\lambda(u_\lambda)+\frac{1}{n}S_n^{n/2}
      & \text{for all $t\in[0,1]$}.
    \end{cases}
   \end{equation}
   In particular, from \eqref{eq:claimHaitao} we easily see that
   $$\eta_0(t) = u_\lambda+tR_0U_\e\in \Gamma$$
   (by enlarging $R_0$ if needed),
   and thus $\Gamma\neq \varnothing$, as claimed.
   \vspace*{0.1cm}
   
   Now we have proved that $\Gamma\neq\varnothing$, we can proceed towards the end of the proof. 
   To this end we first observe that, since it is non-empty, this set $\Gamma$ is a
   \emph{complete metric space}, when endowed with the distance
   $$d_\Gamma(\eta_1,\eta_2) := \max_{0\leq t\leq 1}{\rho_\e}\big(\eta_1(t)-\eta_2(t)\big);$$
   moreover, since $I_\lambda$ is \emph{real-valued and continuous} on $\mathcal{X}^{1,2}(\Omega)$,
   it is easy to recognize that the functional $\Phi:\Gamma\to \R$ defined as
   $$\Phi(\eta) := \max_{0\leq t\leq 1}I_\lambda(\eta(t)),$$
   is (well-defined and) continuous on $\Gamma$. In view of these facts,
   we are then entitled to apply the Ekeland Variational Principle
   to this functional $\Phi$ on $\Gamma$: setting
   $$\gamma_0 := \inf{\Gamma}\Phi(\eta),$$
   there exists a sequence $\{\eta_k\}_k\subseteq\Gamma$ such that
    \begin{equation} \label{eq:EkelandCaseB}
  \begin{split}
    \mathrm{i)}&\,\,\Phi(\eta_k)\leq \gamma_0+1/k, \\
    \mathrm{ii)}&\,\,\Phi(\eta_k)\leq \Phi(\eta)+1/k\,d_\Gamma(\eta_k,\eta) \quad\text{for every $\eta\in\Gamma$}.    
    \end{split}
  \end{equation}
  Now, starting from
  \eqref{eq:EkelandCaseB} and proceeding exactly as in the proof of \cite[Lemma 3.5]{BadTar} (see also
  \cite[Lemma 4.3]{GiacMukSre}), we can find another sequence
  $$v_k = \eta_k(t_k)\in T$$
  (for some $t_k\in[0,1]$) such that
  \vspace*{0.1cm}
  
    a)\,\,$I_\lambda(v_k)\to\gamma_0$ as $k\to+\infty$;
    \vspace*{0.05cm}
    
    b)\,\,there exists some $C > 0$ such that, for every $w\in T$, one has
    \begin{equation} \label{eq:237Haitao}
    \begin{split}
     & \mathcal{B}_{\e}(v_k,w-v_k)
      - \lambda\int_\Omega v_k^{-\gamma}(w-v_k)\,dx \\
      &\qquad\qquad 
      -\int_\Omega v_k^{2^*-1}(w-v_k)\,dx \geq -\frac{C}{k}(1+{\rho_\e}(w)).
     \end{split}
    \end{equation}
    In particular, choosing $w = 2v_k$ in \eqref{eq:237Haitao}, we get
    \begin{equation} \label{eq:choice2vk}
     {\rho_\e}(v_k)^2-\lambda\int_\Omega v^{1-\gamma}\,dx
     -\int_\Omega v_k^{2^*}\,dx \geq -\frac{C}{k}(1+2{\rho_\e}(v_k)).
    \end{equation}
    By combining \eqref{eq:choice2vk} 
    with assertion a), and exploiting H\"older's and Sobolev's i\-ne\-qua\-lities,
    we then obtain the following estimate
    \begin{equation} \label{eq:dadedurrevkbounded}
     \begin{split}
     \gamma_0+o(1) & = \frac{1}{2}{\rho_\e}(v_k)^2-
     \frac{\lambda}{1-\gamma}\int_\Omega v_k^{1-\gamma}\,dx
     - \frac{1}{2^*}\int_\Omega v_k^{2^*}\,dx
     \\
     & \geq \Big(\frac{1}{2}-\frac{1}{2^*}\Big){\rho_\e}(v_k)^2
     - \lambda\Big(\frac{1}{1-\gamma}-\frac{1}{2^*}\Big)\int_\Omega v_k^{1-\gamma}\,dx
     \\
     & \qquad -\frac{C}{2^*\,k}(1+2{\rho_\e}(v_k)) \\
     & \geq \Big(\frac{1}{2}-\frac{1}{2^*}\Big)
      {\rho_\e}(v_k)^2- C\big({\rho_\e}(v_k)^{1-\gamma}-2{\rho_\e}(v_k)-1\big),
     \end{split}
    \end{equation}
    where $C > 0$ is a constant depending on $n$ and on $|\Omega|$. Since, obviously,
    $$c_0 = \frac{1}{2}-\frac{1}{2^*} > 0,$$
    it is readily seen from \eqref{eq:dadedurrevkbounded} that the sequence $\{v_k\}_k$
    is \emph{bounded in $\mathcal{X}^{1,2}(\Omega)$} (otherwise, by possibly choosing a sub-sequence
    we would have ${\rho_\e}(v_k)\to+\infty$, and hence
    the right-hand side of \eqref{eq:dadedurrevkbounded} would diverges as $k\to+\infty$, which
    is not possible). 
    
    In view of this fact, we can thus proceed as in the proof of Lemma \ref{prop:Lemma26Haitao} to
    show that $\{v_k\}_k$ weakly converges (up to a sub-sequence)
    to a \emph{weak solution} $v_\lambda\in\mathcal{X}^{1,2}(\Omega)$ of 
    problem \eqref{eq:Problem}$_{\lambda}$,
    further satisfying the identity
    \begin{equation} \label{eq:238Haitao}
     {\rho_\e}(v_k-v_\lambda)^2-\|v_k-v_\lambda\|^{2^*}_{L^{2^*}(\Omega)} = o(1)\quad
     \text{as $k\to+\infty$}.
    \end{equation}
    In view of these facts, to complete the proof we are left to show that
    $v_\lambda\not\equiv u_\lambda$. To this end we first observe that,
    given any $\eta\in\Gamma$, we have
    $$\text{${\rho_\e}(\eta(0)-u_\lambda) = 0$\quad and \quad ${\rho_\e}(\eta(1)-u_\lambda) > \varrho_1$},$$
    and hence there exists a point $t_\eta\in[0,1]$ such that ${\rho_\e}(\eta(t_\eta)-u_\lambda) = \varrho_1$;
    as a con\-se\-quence, since \emph{we are assuming that} \textsc{Case B)} holds, we obtain
    \begin{align*}
     \gamma_0 & = \inf_{\eta\in\Gamma}\Phi(\eta)
     \geq \inf\big\{I_\lambda(\eta(t_\eta)):\,\eta\in\Gamma\big\} \\
     & \geq \inf\{I_\lambda(u):
     \text{$u\in T$ and ${\rho_\e}(u-u_\lambda) = \varrho_1$}\} > I_\lambda(u_\lambda).
    \end{align*}
    On the other hand, since we already know that $\eta_0(t) = u_\lambda+tR_0U_\e\in \Gamma$,
    from \eqref{eq:claimHaitao} (and the very definition of $\gamma_0$) we derive the following
    estimate
    \begin{align*}
     \gamma_0 & \leq \Phi(\eta_0) = \max_{0\leq t\leq 1}I_\lambda(\eta_0(t)) <
     I_\lambda(u_\lambda)+\frac{1}{n}S_n^{n/2}.
    \end{align*}
    Summing up, we have
    \begin{equation} \label{eq:236Haitao}
     I_\lambda(u_\lambda) < \gamma_0 < I_\lambda(u_\lambda)+\frac{1}{n}S_n^{n/2}.
    \end{equation}
    Now, since the sequence $\{v_k\}_k$ weakly converges in $\mathcal{X}^{1,2}(\Omega)$
    to $v_\lambda$ as $k\to+\infty$,
    \emph{the same as\-ser\-tions} in \eqref{eq:comeLemma21Haitao}
    hold also in this context; this, together with \eqref{eq:236Haitao}
    and the above property a) of the sequence $\{v_k\}_k$, gives
    \begin{equation} \label{eq:239Haitao}
     \begin{split}
      & \frac{1}{2}{\rho_\e}(v_k-v_\lambda)^2
    -\frac{1}{2^*}\|v_k-v_\lambda\|^{2^*}_{L^{2^*}(\Omega)} \\
    & \qquad
     =\frac{1}{2}\big({\rho_\e}(v_k)^2-{\rho_\e}(v_\lambda)^2\big)
      -\frac{1}{2^*}\big(\|v_k\|^{2^*}_{L^{2^*}(\Omega)}
      - \|v\|^{2^*}_{L^{2^*}(\Omega)}\big)+o(1)
     \\[0.15cm]
     & \qquad 
     = I_\lambda(v_k)-I_\lambda(u_\lambda)+o(1) 
     = \gamma_0-I_\lambda(u_\lambda)+o(1) \\
     & \qquad < \frac{1}{n}S_n^{n/2}-\delta_0,
     \end{split}
    \end{equation}
    for some $\delta_0 > 0$ such that $1/n\,S_n^{n/2}-\delta_0 > 0$ (provided that $k$ is large enough).
    
   Gathering \eqref{eq:238Haitao}-\eqref{eq:239Haitao}, 
   and arguing as in \cite[Proposition 3.1]{Tarantello}, it is then easy to recognize that
    $v_k\to v_\lambda$ \emph{strongly in $\mathcal{X}^{1,2}(\Omega)$};
    as a consequence, by the continuity of the functional
    $I_\lambda$ and by \eqref{eq:237Haitao}-\eqref{eq:236Haitao}, we get
    $$I_\lambda(u_\lambda) < \gamma_0 =  \lim_{k\to+\infty}I_\lambda(v_k) = I_\lambda(v_\lambda),$$
    and this finally proves that $v_\lambda\not\equiv u_\lambda$, as desired.
    \end{proof}
    Using all the results established so far, we are finally ready to provide the
    \begin{proof}[Proof (of Theorem \ref{thm:main2}).] 
    Let $\e_0 > 0$ be as in Lemma \ref{lem:CrucialLemma}, and let $\lambda_* > 0$ be as in Lemma 
    \ref{lem:Lambdafinito}; moreover, let $0<\e<\e_0$ and $0<\lambda<\lambda_*$ be fixed.
    
    As already observed at the beginning of the section, by scrutinizing the proof
    of Lemma \ref{lem:Lambdafinito} it is easy to recognize that 
    $\Lambda_\e\geq \lambda_*$ (independently of $\e$), 
    where $\Lambda_\e>0$ is the optimal threshold of solvability of
    problem \eqref{eq:Problem}$_{\lambda}$ provided by Theorem \ref{thm:main}; as a consequence,
    by the cited Theorem \ref{thm:main}, we infer that
    there exists a first weak solution $u_\lambda$ of this problem.
    
    On the other hand, by combining Propositions \ref{prop:Lemma26Haitao}-\ref{prop:Lemma27Haitao}
    (which can be applied, since we are assuming that $\e < \e_0$),
    we deduce that there exists another weak solution $v_\lambda$ of problem \eqref{eq:Problem}$_{\lambda}$
    such that $v_\lambda\neq u_\lambda$, and the proof is complete.
    \end{proof}
	\begin{proof}[Proof (of Corollary \ref{cor:BrezisNirenbergpernoi}).] 
	 Assume that $n = 3$ and $0<s<1/2$. By scrutinizing the proof
	 of Lemma \ref{lem:CrucialLemma}, we 
	 see that the cited Lemma \ref{lem:CrucialLemma} holds
	  for $\mathcal{L}_{1} = -\Delta+(-\Delta)^s$: this means, precisely, that
	 there exist $\nu_0 > 0$ and $R_0 > 0$ such that
	 \begin{equation*}
\left\{ \begin{array}{lr}
I_{\lambda}(u_{\lambda}+RU_\nu) < I_{\lambda}(u_{\lambda}) & \textrm{for all  $\nu \in (0,\nu_{0})$ and $R\geq R_{0}$},\\
I_{\lambda}(u_{\lambda}+t R_{0}U_\nu) < I_{\lambda}(u_{\lambda}) + \tfrac{1}{n} S_{n}^{n/2} & \textrm{for all  $\nu \in (0,\nu_{0})$ and $t \in [0,1]$},
\end{array}\right.
\end{equation*}
where $U_\nu$ is as in \eqref{eq:Scelta_Talenti} (with $\e = \nu$ and $\alpha = 1$), and
$$I_\lambda(u) =  \dfrac{1}{2}\mathcal{\rho}(u)^2 - \dfrac{\lambda}{1-\gamma}\int_{\Omega}|u|^{1-\gamma}\, dx - \dfrac{1}{2^{\ast}}\int_{\Omega}|u|^{2^{\ast}}\, dx.$$
With this result at hand, we then see that Propositions \ref{prop:Lemma26Haitao}-\ref{prop:Lemma27Haitao}
hold (with the same proof) for the operator $\LL$, and 
of Corollary \ref{cor:BrezisNirenbergpernoi} now follows by
repeating word by word the demonstration of Theorem \ref{thm:main2} (with $\e = 1$).
\end{proof}
 
\end{document}